\newtheorem{theorem}{Theorem}[section]
\newtheorem{proposition}[theorem]{Proposition}
\newtheorem{lemma}[theorem]{Lemma}
\newtheorem{corollary}[theorem]{Corollary}
\newtheorem{remark}[theorem]{Remark}
\theoremstyle{remark}
\newcommand{\spn}{\mathop{\mathrm{span}}}
\newcommand{\nats}{\mathbb{N}}
\newcommand{\RR}{\mathbb{R}}
\newcommand{\M}{\mathbb{M}}
\newcommand{\calo}{\mathcal{O}}
\newcommand{\sph}{\mathbb{S}} 
\newcommand{\bfa}{\mathbf{a}}
\renewcommand{\d}{\mathrm{dist}}
\newcommand{\bfbeta}{\boldsymbol \beta}
\newcommand{\cF}{{\mathcal F}}
\newcommand{\dist}{\mathrm {dist}}
\def\calh{{\mathcal H}}
\def\caln{{\mathcal N}}
\newcommand{\set}[1]{\{ #1 \}}
\newcommand{\dotp}[2]{\langle #1 , #2 \rangle}
\newcommand{\proj}{\mathsf P}
\numberwithin{equation}{section}
\title[A Novel Meshless Galerkin Method]{A Novel Galerkin Method for Solving PDEs on the Sphere Using
  Highly Localized Kernel Bases} \author{Francis J.~Narcowich}
\address{ Department of Mathematics, Texas A\&M University, College
  Station, TX 77843, USA. } \email{fnarc@math.tamu.edu}
\thanks{Research supported by grant DMS-1211566 from the National
  Science Foundation.}
\author{Stephen T. Rowe}
\address{ Department of Mathematics, Texas A\&M
    University, College Station, TX 77843, USA.}
\email{srowe@math.tamu.edu}  
\thanks{Research supported by grant DMS-1211566 from the National
    Science Foundation and Sandia National Laboratories}
\author{Joseph D.~Ward}
\address{ Department of Mathematics, Texas A\&M
    University, College Station, TX 77843, USA. }
\email{jward@math.tamu.edu}
\thanks{Research
    supported by grant DMS-1211566 from the National
    Science Foundation.}
\subjclass[2010]{65M60, 65M12, 41A30, 41A55}
\keywords{Meshless kernel method, Galerkin, PDEs on the sphere}
\begin{document}

\begin{abstract}
The main goal of this paper is to introduce a novel meshless kernel 
Galerkin method for numerically solving partial differential equations 
on the sphere. Specifically, we will use this method to treat the 
partial differential equation for stationary heat conduction on 
$\mathbb S^2$, in an inhomogeneous, anisotropic medium. The Galerkin method used to do this employs spatially well-localized, ``small footprint'',  robust bases for the associated kernel space. The stiffness matrices arising in the problem have entries decaying 
exponentially fast away from the diagonal. Discretization is achieved by first zeroing out small entries, resulting in a sparse matrix, and then replacing the remaining entries by ones  computed via a very efficient kernel quadrature formula for the sphere. Error estimates for the approximate  Galerkin solution are also obtained. 
\end{abstract}

\maketitle

\section{Introduction}

The main goal of this paper is to introduce and analyze a novel meshless kernel Galerkin method for numerically solving partial differential equations on the sphere. Specifically, we will use this method to treat the partial differential equation for stationary heat conduction on $\sph^2$, the unit sphere in $\RR^3$,  in an inhomogeneous, anisotropic medium. The equation for this heat-flow is
\begin{equation}
\label{pde_dyadic}
Lu  = -\text{\rm div}( \mathbf a \! \cdot \! \! \nabla u) + b(x)u=f,
\end{equation}
where div and $\nabla$ are the divergence and gradient on $\sph^2$, and $\bfa$ is a rank 2 positive definite tensor on $\sph^2$, and $f$ is in the Sobolev space $H_s$, $s\ge 0$. The analysis includes error estimates when the exact stiffness matrix is used, and also when various quadrature-based discretizations of that matrix are employed.

The kernels that we employ here are surface splines, $\phi_m(x\cdot
y)= (-1)^m(1-x\cdot y)^{m-1}\log(1-x\cdot y)$, $m\ge 2$. These are
conditionally positive definite spherical basis functions (SBF). Their
reproducing kernel Hilbert spaces (native spaces) are equivalent to
the Sobolev spaces $H_m\approx W_2^m(\sph^2)$.  The associated
approximation spaces involve spans of \{$\phi_m((\cdot)\cdot
\xi)\}_{\xi \in X}$, with $X$ being a discrete, finite set of
quasi-uniformly distributed centers or nodes, along with spherical
harmonics of order $m$. We denote them by $V_{\phi_m,X}$.

It is well known that, under mild conditions on an SBF $\phi$, the
spaces $V_{\phi,X}$ have excellent approximation power
\cite{mhaskar-etal-2010}. This makes them an an obvious choice for use
in meshless methods for solving PDEs. Kernel Galerkin methods using
radial basis functions (RBFs) were employed in \cite{Wendland-99-2} to
theoretically treat elliptic partial differential equations on
$\RR^n$.  On $\sph^2$, SBF Galerkin methods for $-\Delta u = f$ have
been studied in \cite{LeGia-2004, LeGia-2005}, More recently, Le Gia
\emph{et al.} \cite{LeGia-et-al-10-1, LeGia-et-al-11-1} used
collocation and multi-level SBF methods for the purpose.

There are several drawbacks to these methods.  Bases of the form
$\{\phi((\cdot)\cdot \xi)\colon \xi\in X\}$ give rise to
interpolation/stiffness matrices that are full and
poorly-conditioned. The bases are not well localized spatially:
changing even small amounts of data requires re-computation of the
matrices involved \cite[pg.~208]{wendland_book}. For Galerkin methods
on $\sph^2$, there is an additional problem that arises in connection
with discretization. Entries in the stiffness matrices have to be
numerically computed via quadrature. For instance, the method used in
\cite{LeGia-2004} requires solving an optimization problem to find the
weights involved. This is a computationally expensive
process. Applying it to large numbers of nodes is problematic.

The new meshless Galerkin method that we present here overcomes these
difficulties. There are two novel features of our Galerkin approach to
numerically approximating solutions to \eqref{pde_dyadic}. First, for
the surface splines, Fuselier \emph{et al.}  \cite{FHNWW2012} recently
showed in the Lagrange basis for $V_{\phi_m,X}$ each Lagrange function
$\chi_\xi(x)$ is highly localized spatially; indeed, $\chi_\xi(x)$
decays exponentially fast as $x$ moves away from $\xi$. Moreover, when
$\chi_\xi$ is expressed in the kernel basis for $V_{\phi_m,X}$, the
coefficient $\alpha_{\xi,\eta}$ of $\phi_m(x\cdot \eta)$ also decays
exponentially fast as $\eta$ moves away from $\xi$.  Thus $\chi_\xi$
has a ``small footprint'' in the kernel basis. These features make the
Lagrange basis robust. It is for this basis that we will build our initial
theory. 

$L_2$-error estimates for the Galekin solutions constructed using the thin-plate splines are derived in section~\ref{SBF_tau_est}. Indeed, we derive the error estimates for Galerkin solutions constructed using an arbitrary SBF $\phi$, restricted only
by the condition that coefficients in its expansion in spherical
harmonics satisfy \eqref{tau_assumpt}. (These conditions hold for
$\phi_m$, with $\tau=m$.) If $f\in H_s(\sph^2)$, the $L^2$ error bounds
derived in section~\ref{SBF_tau_est} are $C\|f\|_s h_X^{s+2}$, for
$0\le s \le 2m-2$, and $C\|f\|_{2m-2}h_X^{2m}$, for $s>2m-2$. These
error estimates differ from those found in works cited above in two
ways: they hold for operators of the form $L$ in \eqref{pde_dyadic},
not just $\Delta$, and they apply even when $s$ is fractional.

Constructing  the $\chi_\xi$'s requires all of the points in $X$. In this sense, the $\chi_\xi$'s form  a \emph{global} Lagrange basis; finding them is computationally expensive. In \cite{FHNWW2012} a \emph{local} Lagrange basis was
also introduced. Each basis element $
\chi_\xi^{loc}$ is constructed using only $\calo((\log N))^2)$ in the
neighborhood of $\xi$, and it approximates $\chi_\xi$ well. These
elements have very fast spatial decay, although not exponential. They
have the advantage that computing them is fast and
parallelizable. 

The computational advantages of these local bases make them a good choice for the implementation of our meshless method. We use our initial theory for the global Lagrange basis to further develop this method when a local Lagrange basis is used.

The stiffness matrix matrix $A$ in the global Lagrange basis is the key to the whole method. Our technique relies on having high quality approximations to this matrix. $A$ itself has a number of
very attractive properties in the global basis: (1) The entries $A_{\xi,\eta}$ \emph{decay
exponentialy fast} in the distance between $\xi$ and $\eta$. Thus $A$
is essentially sparse and, as we shall see, zeroing out small entries will provide a sparse approximation.
(2)~If $N_X=\text{card}(X)$, then the number of \emph{non-negligible}
entries in each row is $\calo\big((\log(N_X)^2\big)$. (3) The
\emph{condition number} of $A$ is $\calo(q_X^{-2})$, where $q_X$ is
half the separation distance for $X$.

Discretization of the stiffness matrix is essential to the Galerkin method. After zeroing out small entries, the remaining matrix is sparse, having $\calo\big(N_X(\log(N_X)^2\big)$ entries. These entries are integrals that have to be computed via quadrature. To do
this, we use kernel-based quadrature formulas for the
sphere\cite{sommariva_womer2005, hesse-et-al-2010, FHNWW2013}, with
our kernels being surface splines. The special bases available to us
for the approximation spaces corresponding to the $\phi_m$'s enable us
to efficiently construct the quadrature
formulas\cite{FHNWW2013}. Unlike the quadrature formula used in
\cite{LeGia-2004}, the weights are obtained by solving a \emph{linear}
system of equations. Finding the weights follows readily by applying
the techniques of \cite{FHNWW2012}. These quadrature formulas are
accurate, optimally so in many cases, even in the presence of noise,
and they are stable when the number of nodes, which we denote by $Y$, increases. Indeed, tests
run with this method handled over two-thirds of a million nodes
\cite{FHNWW2013}.

When the problem is discretized, we get the continuous error plus a
quadrature error. The quadrature error comes from replacing the stiffness matrix $A$ by $A^Y$, whose entries are computed from those of $A$ via the Theorem~\ref{error_l2_norm_A} gives a theoretical
bound on $\|A-A^Y\|_2 $. If we ignore logarithms and similar terms,
this is $\|A-A^Y\|_2 \sim (N_X/N_Y)^{M}$, where $\phi_M$, $2\le M \le
m$, is used to obtain weights in the quadrature formula.  The $L^2$
error estimate for the discretized Galerkin solution is given in
Theorem~\ref{discrete_galerkin_error}.

There is related work for $\RR^n$. As mentioned earlier, Wendland
\cite{Wendland-99-2} explored RBF Galerkin for domains in
$\RR^n$. However, problems with discretizing the stiffness matrix
entries were limiting factors in implementing the method. For
$\sph^n$, the quadrature methods discussed above avoid these
problems. In \cite{BondLehoucqRowe}, Bond \emph{et al.} successfully
employed an RBF Galerkin method, using an approach based on the one
used here, in numerical experiments for a peridynamic model of a
nonlocal heat equation (See \cite{QiangDuSIAMNews} for a discussion of
peridynamics.)

We now discuss an outline of the paper and its organization. In
section~\ref{preliminaries}, we discuss background information on
quasi-uniform sets of centers, geometry, and Sobolev spaces.  In
section~\ref{SBFs}, we discuss the relevant approximation spaces for
the Galerkin method that we
introduce. Section~\ref{small_footprint_bases} describes the major
tools used here: the highly localized Lagrange bases we employ
throughout the work (section~\ref{lagrange_functions}) and the
quadrature formulas essential for discretizing the stiffness matrix
(section~\ref{quadrature_formulas}). Section~\ref{Lu=f_prop} discusses
general properties of \eqref{pde_dyadic}, including regularity of weak
solutions and a useful application of the ``Nitsche trick"
\cite{Nitsche-71}. Section~\ref{SBF_tau_est} gives Galerkin error
estimates in the case where the approximation spaces are generated by
SBFs satisfying only \eqref{tau_assumpt}. The next section,
section~\ref{A_lagrange_basis}, is key. It sets forth the properties
of the stiffness matrix in the Lagrange basis: its quasi-bandedness,
good conditioning, decay of matrix elements away from the diagonal,
and general robustness.  Section~\ref{discretized_galerkin_solution}
discusses aspects of discretizing the problem. Theorem
\ref{discrete_galerkin_error} is the main result of the section; it
contains the theoretical $L^2$-bound on the difference between
discretized Galerkin solution and the exact solution.  In section~\ref{sparse_app_local_lag}, we  discusse reducing the computational expense of numerically finding the Galerkin solution to the problem. There are two aspects of this. The first is obtaining a truncated approximation $\widetilde A^Y$ to the discretized stiffness matrix $A^Y$. Each row in $\widetilde A^Y$ has $\calo(\log(N_X))^2)$ nonzero entries, as opposed to $N_X$ in $A^Y$. The second is to replace the global Lagrange basis with a local one, which is much easier to numerically find. The error estimates from making these approximations are virtually unchanged. In
section~\ref{numerics}, the results of numerical experiments that we
did are presented. In terms of rates of convergence, the numerical
results were actually better than the theory predicted.  Finally, we
wish to mention a few new results discussed in the appendix.  In
section~\ref{SBF_approximation_power}, we establish a generalized
version of the so called ``doubling trick,'' which applies to SBF
interpolation of functions twice as smooth as those in the native
space of the SBF \cite{Schaback-00-1, Fuselier_Wright_2012}.  The
result, which is given in Theorem~\ref{cond_pos_def_case}, holds for
functions smoother that those in the native space for $\phi$, but not
having ``double'' the smoothness. In addition, it applies for SBFs
that are conditionally positive definite.

\section{Preliminaries}
\label{preliminaries}


\subsection{Geometry of Sets of Centers}\label{sets_centers}

Although $\sph^2$ is the underlying space for the Galerkin methods
treated in this paper, much of what we will discuss in the next few
sections applies to $\sph^n$. In view of this, we will work in
$\sph^n$.

Let $\dist(x,y)$ be the geodesic (great circle) distance between two
points on $\sph^n$. We will let $X = \{x_j\}_{j=1}^N \subset \sph^n$
be a set of $N$ distinct points; we will call $X$ a set of centers. We
remark that, apart from, say the vertices of the Platonic solids in
$\sph^2$ and similar quantities in $\sph^n$, $n>2$, there are no
uniformly distributed sets of points in $\sph^n$. We can, however,
obtain \emph{quasi-uniform} sets of points. We will explain this
below.

There are three geometrical quantities associated with $X$. The first
is the \emph{separation radius}, $q_X$.  For every $x\in X$, the
radius of the largest ball whose interior contains no other point of
$X$ is given by $\frac{1}{2}\dist(x,X\setminus \{x\})$; $q_X$ is
defined to be the smallest of these radii:
\[ q_X:=\min_{x\in X} \frac{1}{2}\dist(x,X\setminus \{x\}).
\] Obviously $2q_X$ is the minimum distance between any two points in
$X$. The second and third are the \emph{mesh norm}, $h_X$, and the
mesh ratio, $\rho_X$. The mesh norm $h_X$ is the radius of the largest
ball in $\sph^n$ whose interior contains no point of $X$. It also can
be characterized as the largest distance of any point in $\sph^n$ from
$X$. The mesh ratio is the ratio of $h_X$ to $q_X$:
\[ h_X:= \max_{x\in \sph^n}\dist (x,X) \ \mbox{and } \rho_X:=
h_X/q_X.
\] The mesh norm, which is also called the \emph{fill distance},
measures how tightly packed the centers are in $\sph^n$. The mesh
ratio measures how uniformly the centers are placed. When it is close
to 1, the distribution of the points in $X$ is said to be \emph{quasi
uniform}.

For $\rho\ge 1$, let $\cF_\rho = \cF_\rho(\sph^n)$ be the family of
all sets of centers $X$ with $\rho_X\le \rho\,$; we will say that the
family $\cF_\rho$ is \emph{$\rho$-uniform}.  Unless confusion would
arise, we will not indicate either $\rho$ or $\sph^n$, and just use
$\cF$ to designate the family $\cF_\rho(\sph^n)$.  The specific $\rho$
or sphere $\sph^n$ will be clear from the context.

On $\sph^2$, there are three important quasi-uniform sets of nodes
(centers): Fibonaccii nodes, icosahedral nodes, quasi minimum energy
nodes. All three of these families of nodes are quite popular in
applications; see, for
example~\cite{Giraldo:1997,StuhnePeltier:1999,Ringler:2000GeodesicGrids,Majewski:2002GME}
for the icosahedral
nodes,~\cite{SwinbankPurser:2006,SlobbeSimonsKlees:2012,HuttigKai:2008}
for the Fibonacci nodes,
and~\cite{WrightFlyerYuen,flyer_wright2009,FlyerLehtoBlaiseWrightStCyr2012,SWFK2012}
for the quasi-minimum energy nodes. Similar considerations apply
to $\sph^n$, $n>2$.

\subsection{Spherical Harmonics and Sobolev Spaces}

The sphere $\sph^n$ is of course a Riemannian manifold with metric
tensor $g_{ij}$ and invariant measure $d\mu = \sqrt{\det(g_{ij})}
dx^1\cdots dx^n$, where $x^1,x^2, \ldots, x^n$ is a smooth set of
local coordinates. For $\sph^2$, the metric tensor in spherical
coordinates $(\theta,\varphi)$, with $\theta$ being the colatitude and
$\varphi$ being the longitude, has the form
\[ g_{ij} = \begin{pmatrix} 1 & 0 \\ 0 & \sin^2 \theta \end{pmatrix}.
\] The metric tensor for $\sph^n$ also can be expressed in a similar
set of coordinates. On any Riemannian manifold there are two important
operators: the covariant derivative $\nabla$, powers of covariant
derivatives $\nabla^k$, which is an operator on tensors, and the
Laplace-Beltrami operator $\Delta = -\nabla^\ast \nabla$. The
covariant derivative operating on a function is the usual gradient,
expressed appropriately. Other powers are tensor operators. For
example, $\nabla^2$ plays the role of a Hessian. In local coordinates,
$\Delta$ has the form
\begin{align}
\Delta u = \frac{1}{\sqrt{\det(g_{ij})}} \sum_{i,j}
\frac{\partial}{\partial x^i} \sqrt{\det(g_{ij})} 
g^{ij} \frac{\partial u}{\partial x^i},
\label{lap_bel_op}
\end{align}
where $g^{ij} = (g_{ij})^{-1}$. For $\sph^2$, in spherical coordinates, the Laplace-Beltrami operator is given by
\[
\Delta u = \frac{1}{\sin \theta}\frac{\partial}{\partial \theta } \bigg(\sin \theta  \frac{\partial u}{\partial \theta } \bigg) + \frac{1}{\sin^2 \theta}\frac{\partial^2 u}{\partial \varphi^2 }.
\]

We now turn to a discussion of spherical harmonics; the details may be found in \cite{Mueller-66-1}. Spherical harmonics are eigenfunctions of $\Delta$. On $\sph^n$, the  eigenvalues of $-\Delta$ are $\lambda_\ell = \ell(\ell + n-1)$. The eigenspace corresponding to $\lambda_\ell$ is degenerate, and has dimension 
\begin{equation}
\label{dim_ell}
d_\ell = 
\left\{
\begin{array}{cc}
1,&\ell=0,\\[6pt]
\displaystyle{\frac{(2\ell+n-1) 
\Gamma(\ell+n-1)}{\Gamma(\ell+1)\Gamma(n)}} \sim \ell^{n-1}\,,
&\ell\ge 1\,.
\end{array}\right. .
\end{equation}
We note that $d_\ell = \calo(\ell^{n-1})$. The eigenfunctions corresponding to $\lambda_\ell$ are denoted by $Y_{\ell,k}$, where $k=1, \ldots, d_\ell$. We will use the real-valued versions of the spherical harmonics. The eigenspace of $\lambda_\ell$ will be denoted by $\calh_\ell$. The space of all spherical harmonics of order $L$ or less will be denoted by $\Pi_L = \bigoplus_{\ell=0}^L \calh_\ell$. In addition, we mention the well-known addition formula. Let $x,y\in \sph^n$ and let $x\cdot y$ denote the usual dot product from $\RR^{n+1}$. Then,
\begin{equation}
\label{addition_thm}
\sum_{k=1}^{d_\ell}Y_{\ell,k}(x)Y_{\ell,k}(y) = \frac{2\ell+n-1}{(n-1)\omega_n} P_\ell^{\frac{n-1}2}(x\cdot y),
\end{equation}
where  $\omega_n$ is the volume of $\sph^n$, and $ P_\ell^{\frac{n-1}2}$ is the degree $\ell$ ultraspherical polynomial of order $\frac{n-1}2$. 

The spherical harmonics form a complete orthonormal set in $L^2(\sph^n)$. Given $f$ in $L^2(\sph^n)$, we can expand $f$ in the series
$
f = \sum_{\ell=0}^\infty \sum_{k=1}^{d_\ell} \hat f_{\ell,k}Y_{\ell,k}.
$
As usual, given $f,g\in L^2$, we have
$
\langle f, g\rangle_{L^2} = \sum_{\ell=0}^\infty \sum_{k=1}^{d_\ell} \hat f_{\ell,k} \overline{\hat g_{\ell,k}}.
$

In this paper we will work with fractional order Sobolev spaces defined in terms of Bessel potentials. \cite{Strichartz-83-1, triebel1992}. The Sobolev space of order $\tau\ge 0$ is
\[
H_\tau := \{f\in L^2 \colon \|f\|_{H_\tau} := \|(I-\Delta)^{\tau/2} f\|_{L^2} <\infty\}.
\]
This is a Hilbert space in the inner product
\begin{equation}
\label{sobolev_inner_prod}
\langle f, g\rangle_{H_\tau} = \langle (I-\Delta)^{\tau/2}f, (I-\Delta)^{\tau/2}g\rangle_{L^2} = \sum_{\ell=0}^\infty \sum_{k=1}^{d_\ell} (1+\lambda_\ell)^\tau \hat f_{\ell,k} \overline{\hat g_{\ell,k}}.
\end{equation}
When $\tau=m$ is an integer, these spaces agree, up to norm equivalence,  with $W_2^m(\sph^n)$, which are defined in terms of covariant derivatives \cite{aubin1982, triebel1992}.

\section{Spherical Basis Functions and Approximation Spaces}\label{SBFs}

In the following, we will be working on $\sph^n$. We start with zonal functions. A continuous function $\phi:[-1,1]\to \RR$ is said to be \emph{zonal} if it has the expansion,
\begin{equation}
\label{zonal_function}
\phi(t):=\sum_{\ell=0}^\infty \hat\phi_\ell \frac{2\ell+n-1}{(n-1)\omega_n} P_\ell^{\frac{n-1}2}(t),
\end{equation}
where $P_\ell^{\frac{n-1}2}$ is a degree $\ell$ ultra spherical polynomial \cite[\S 4.7]{Szego-75-1}.  Zonal functions give rise to kernels on the sphere in the following way. Let $t=x\cdot y$, $x,y\in \sph^n$. Using the addition formula \eqref{addition_thm} in \eqref{zonal_function}, we see that
\begin{equation}
\label{zonal_kernel}
\phi(x\cdot y) = \sum_{\ell=0}^\infty  \hat\phi_\ell \sum_{k=1}^{d_\ell}Y_{\ell,k}(x)Y_{\ell,k}(y),
\end{equation}
which is a kernel mapping $\sph^n\times \sph^n$ to $\RR$.

SBFs are zonal functions having $\hat \phi_\ell>0$ for all $\ell\ge 0$. They are strictly positive definite functions on $\sph^n$. This means that the matrix $A = (\phi(\xi\cdot \eta))_{\xi,\eta \in X} $ is positive definite for every choice of $X$. Equivalently, 
\[
\sum_{\xi,\eta \in X}c_\xi c_\eta \phi(\xi\cdot \eta) > 0,
\]
except when the $c$'s are all $0$. $A$ being positive definite allows us to interpolate arbitrary continuous functions (or data, for that matter) on $\sph^n$, using functions from the \emph{approximation space} 
\[
V_{\phi,X}:= \spn\{\phi((\cdot)\cdot  \xi) \colon \xi \in X\}.
\]
This follows because the existence of $c=A^{-1}f|_X$ implies that 
\[
I_Xf(x) = \sum_{\xi \in X} c_\xi \phi(x\cdot \xi)
\]
interpolates $f$ on $X$, and does so uniquely. The SBF $\phi$ is also a reproducing kernel for the Hilbert space 
\[
\caln := \{f\in L^2 \colon \sum_{\ell,k} {\hat \phi_\ell}^{-1} |\hat f_{\ell,k}|^2<\infty\}.
\]
This space is often called the \emph{native space} of $\phi$; it has the inner product 
\begin{equation}
\label{native_space_inner_prod}
\langle f,g\rangle_\caln = \sum_{\ell=0}^\infty \sum_{k=1}^{d_\ell}\hat \phi_\ell^{-1}\hat f_{\ell,k} \overline{\hat g}_{\ell,k}.
\end{equation}

The SBF $\phi_\tau$ for which $\hat \phi_{\tau,\ell}=(1+\lambda_\ell)^{-\tau}$, with $\tau>n/2$, is especially important. The native space for $\phi_\tau$ is the Sobolev space $H_\tau$, since $\| f \|_\caln = \|(I-\Delta)^{\tau/2}f\|_{L^2}= \| f \|_{H_\tau}$. Making use of this observation yields a fractional order  ``zeros lemma,''  similar to integer order ones proved in \cite[Appendix A]{HNW_3_2011}. This will be important in the sequel.

\begin{lemma}[Zeros Lemma]  
\label{frac_zeros_lemma}
Let $\sigma,\tau\in \RR$ satisfy $\tau>n/2$, $0 \leq \sigma \leq \tau$. In addition, let $X\subset \sph^n$ be quasi uniform. If  $u\in\calh_\tau$ satisfies  $u|_X=0$, then, for $h_X$ sufficiently small, we have
\[
\|u\|_{H_\sigma} \le Ch_X^{\tau - \sigma}\|u\|_{H_\tau}.
\]
\end{lemma}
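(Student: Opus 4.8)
The plan is to reduce everything to the single endpoint $\sigma=0$ and then recover the full range $0\le\sigma\le\tau$ by an elementary convexity (interpolation) inequality for the Sobolev norms. Indeed, from the spectral description \eqref{sobolev_inner_prod}, writing $\theta=\sigma/\tau\in[0,1]$ and splitting $(1+\lambda_\ell)^\sigma=\big((1+\lambda_\ell)^\tau\big)^\theta$, Hölder's inequality applied to the sum over $\ell,k$ gives
\[
\|u\|_{H_\sigma}\le \|u\|_{L_2}^{\,1-\theta}\,\|u\|_{H_\tau}^{\,\theta}.
\]
Hence, once we establish the $\sigma=0$ bound $\|u\|_{L_2}\le C h_X^{\tau}\|u\|_{H_\tau}$, substituting it together with $\theta=\sigma/\tau$ yields
\[
\|u\|_{H_\sigma}\le \big(C h_X^{\tau}\|u\|_{H_\tau}\big)^{1-\sigma/\tau}\|u\|_{H_\tau}^{\sigma/\tau}=C' h_X^{\tau-\sigma}\|u\|_{H_\tau},
\]
which is the claim. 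So the whole lemma rests on the $L_2$ estimate for functions vanishing on $X$.

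For that estimate I would argue locally and then sum. Cover $\sph^n$ by finitely overlapping geodesic balls $B_j=B(c_j,Kh_X)$, with $K$ a fixed multiple of the mesh-ratio bound $\rho$, so that each $B_j$ contains enough points of $X$ to form a norming set for the spherical harmonics in $\Pi_{m-1}$, where $m=\lceil\tau\rceil$; this is where the hypothesis that $h_X$ be sufficiently small (relative to $m$ and $\rho$) enters. Pulling each ball back to $\RR^n$ by a normal-coordinate chart and invoking a bounded local polynomial reproduction supported on $X\cap B_j$, one obtains for every $x\in B_j$ a polynomial $p_x$ of degree $m-1$, built from the sampled values $u|_{X\cap B_j}$, with
\[
|u(x)-p_x(x)|\le C h_X^{\,\tau-n/2}\,|u|_{H_\tau(\widetilde B_j)},
\]
where $\widetilde B_j\supset B_j$ is a fixed dilate and $|\cdot|_{H_\tau}$ is the intrinsic (Gagliardo-type) seminorm; this is the localized Sobolev embedding $H_\tau\hookrightarrow C$, valid because $\tau>n/2$, combined with the fact that $u$ has scattered zeros throughout $B_j$. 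Since $u|_X=0$ forces $p_x\equiv0$, we get the pointwise bound $|u(x)|\le C h_X^{\,\tau-n/2}|u|_{H_\tau(\widetilde B_j)}$. Squaring, integrating over $B_j$ (whose volume is $\asymp h_X^{\,n}$), and summing over $j$ using the finite-overlap property gives
\[
\|u\|_{L_2}^2\le C h_X^{\,2\tau-n}\sum_j h_X^{\,n}\,|u|_{H_\tau(\widetilde B_j)}^2\le C' h_X^{\,2\tau}\,\|u\|_{H_\tau}^2,
\]
the last step because the doubly-indexed Gagliardo integrals over the $\widetilde B_j$ add up, to within the overlap constant, to the global seminorm.

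The main obstacle is making the local polynomial-reproduction step both valid for fractional $\tau$ and \emph{uniform} over the whole family $\cF_\rho$. Uniformity is essential: the constant $C$ must not degrade as $N=\mathrm{card}(X)\to\infty$, which rules out deriving the fractional estimate by interpolating the integer-order zeros lemma of \cite{HNW_3_2011} between two integer smoothness orders as an operator on the finite-codimension subspace $\{u:u|_X=0\}$, since the interpolation constant for that subspace need not be controlled independently of $X$. The covering argument avoids this because the norming-set constant, the local reproduction norm, and the finite-overlap bound depend only on $\rho$, $n$, and $\tau$. The one genuinely fractional ingredient is the localized embedding/approximation estimate displayed above; I would obtain it by first proving its integer-order analogue (as in \cite{HNW_3_2011}) on each chart and then passing to fractional $\tau$ by complex interpolation on the \emph{fixed} reference ball, where no $X$-dependence is present, after which the scattered-zero reproduction is applied. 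Tracking the $h_X$-scaling through the chart maps, whose distortion is controlled uniformly once $h_X$ is small, then delivers the stated estimate.
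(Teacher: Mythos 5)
Your proof is correct, but it takes a genuinely different route from the paper's. The paper disposes of the lemma in two lines: it takes the SBF $\phi_\tau$ with $\hat\phi_{\tau,\ell}=(1+\lambda_\ell)^{-\tau}$, whose native space is exactly $H_\tau$, invokes the fractional interpolation error estimate $\|u-I_{X,\phi_\tau}u\|_{H_\sigma}\le Ch_X^{\tau-\sigma}\|u\|_{H_\tau}$ of \cite[Theorem~5.5]{Narcowich-etal-07-1}, and observes that $u|_X=0$ forces $I_{X,\phi_\tau}u\equiv 0$, so the interpolation error \emph{is} $u$ itself. You never touch kernel interpolation: you first reduce to the endpoint $\sigma=0$ via the spectral H\"older inequality $\|u\|_{H_\sigma}\le\|u\|_{L_2}^{1-\sigma/\tau}\|u\|_{H_\tau}^{\sigma/\tau}$ (a correct and rather elegant observation that the whole one-parameter family of estimates follows from the single $L_2$ bound), and then prove the $L_2$ scattered-zeros estimate directly by covering $\sph^n$ with balls of radius $\sim Kh_X$, local polynomial reproduction over norming sets, a fractional Bramble--Hilbert bound on a fixed reference ball, and scaling. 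This is essentially a re-derivation of the machinery underlying the paper's citation — it is the argument of \cite{HNW_3_2011} and of Narcowich--Ward--Wendland extended to fractional order — so in effect you prove what the paper imports. What each buys: the paper's proof is short and pushes all analytic difficulty into the cited theorem; yours is self-contained, makes the crucial uniformity of constants over $\cF_\rho$ explicit, and correctly identifies and avoids the trap of interpolating the integer-order zeros lemma on the $X$-dependent subspace $\{u\colon u|_X=0\}$. The price is the standard but nontrivial technical layer you must actually supply: the fractional-order Bramble--Hilbert estimate on the reference ball, the equivalence of the summed local Gagliardo seminorms with the Bessel-potential norm \eqref{sobolev_inner_prod} on $\sph^n$, and uniform control of chart distortion. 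Two minor imprecisions, neither fatal: the dilation factor $K$ should be taken large depending on $n$ and $\lceil\tau\rceil$ (norming sets care about fill distance relative to polynomial degree, not about the mesh ratio $\rho$), and your quasi-interpolant $p_x=\sum_i a_i(x)u(x_i)$ is not literally a polynomial, though the argument needs only that it reproduces polynomials and vanishes when $u|_{X}=0$.
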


\begin{proof}
Let $I_{X, \,\phi_\tau}$ be the interpolation operator corresponding to $\phi_\tau$. By \cite[Theorem~5.5]{Narcowich-etal-07-1}, we have that
\[
\|u - I_{X,\,\phi_\tau}u\|_{H_\sigma} \le Ch_X^{\tau-\sigma}\|u\|_{H_\tau} .
\]
Note that $u|_X=0$ implies that $I_{X,\,\phi_\tau}u \equiv 0$. Using this in the previous equation then yields the result.
\end{proof}

The SBFs discussed above are all strictly positive definite. We will also need to make use of \emph{conditionally} positive definite SBFs. These SBFs have the form given in \eqref{zonal_function}, but the $\hat \phi_\ell$'s need only be positive for $\ell >L$. For $0\le \ell \le L$, $\hat \phi_\ell$ is arbitrary. Conditionally positive definite SBFs are employed to interpolate scattered data, with the requirement that the interpolants reproduce $\Pi_L$, the space of spherical harmonics of degree $L$ or less.  (Other spaces are also possible.)  For a conditionally positive definite SBF $\phi$, 
the corresponding approximation space  is defined to be
\[
V_{\phi,L,X} := \bigg\{\sum_{\xi\in X}a_\xi \phi((\cdot)\cdot \xi) \colon \sum_{\xi\in X}a_{\xi}\,p(\xi) =
0 \ \forall\ p\in \Pi_L \ \bigg\}+\Pi_L
\]
The interpolation operator  that both interpolates continuous functions and reproduces
$\Pi_L$ is
\begin{equation}
\label{SPD_SBF_interp_op}
I_{X,L}f = \sum_{\xi\in X}a_{\xi,L}\phi(x\cdot \xi)+p_{X,L} , \ p_{X,L} \in
\Pi_L, \ \text{ and } \sum_{\xi\in X}a_{\xi,L}p(\xi) =
0,\ p\in \Pi_L.
\end{equation}
The coefficients $a_{\xi,L}$ and the polynomial $p_{X,L}$ are determined by the requirements that the interpolation condition $I_{X,L}f|_X=f|_X$ hold and also that the coefficients satisfy the condition on the right above. Again, the interpolant is unique. There is also a semi-Hilbert space $\caln$ associated with $\phi$. This is defined to be
\[
\caln := \{f\in L^2 \colon \sum_{\ell=L+1}^\infty \sum_{k=1}^{d_\ell} {\hat \phi_\ell}^{-1} |\hat f_{\ell,k}|^2<\infty\},\ \langle f,g\rangle_\caln = \sum_{\ell=L+1}^\infty \sum_{k=1}^{d_\ell}\hat \phi_\ell^{-1}\hat f_{\ell,k} \overline{\hat g}_{\ell,k}.
\]
In addition, we will need the following well-known fact, which we state without proof.

\begin{proposition}
\label{phi_sobolev_space}
Suppose that $\tau>n/2$ and that $\phi$ is an SBF such that there are constants $c$, $C$ and $L\in \nats$ for which
$c(1+\lambda_\ell)^{-\tau} \le \hat \phi_\ell \le C(1+\lambda_\ell)^{-\tau}$ holds either for all $\ell\ge 0$ or for all $\ell \ge L+1$. If $\epsilon>0$, then $\phi \in H_{2\tau- \frac{n}{2} -\epsilon}$ . 

\end{proposition}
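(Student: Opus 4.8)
The plan is to show that the zonal kernel $\phi$, viewed as a function on $\sph^n$ via $x \mapsto \phi(x \cdot y)$ for a fixed $y$ (equivalently, as the reproducing kernel evaluated on the diagonal after a suitable reduction), has Sobolev regularity dictated directly by the decay of its Fourier--Laplace coefficients $\hat\phi_\ell$. The key observation is that the Sobolev norm of a function is computed coefficient-by-coefficient via \eqref{sobolev_inner_prod}, so I only need to understand the spherical-harmonic expansion of $\phi$ and sum a series. First I would fix $y \in \sph^n$ and write $g(x) := \phi(x\cdot y)$. Using the expansion \eqref{zonal_kernel}, the spherical-harmonic coefficients of $g$ are $\hat g_{\ell,k} = \hat\phi_\ell\, Y_{\ell,k}(y)$.

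Next I would compute $\|g\|_{H_\sigma}^2$ for a target smoothness $\sigma$ using \eqref{sobolev_inner_prod}:
\[
\|g\|_{H_\sigma}^2 = \sum_{\ell=0}^\infty (1+\lambda_\ell)^\sigma \hat\phi_\ell^2 \sum_{k=1}^{d_\ell} |Y_{\ell,k}(y)|^2.
\]
The inner sum over $k$ is handled by the addition formula \eqref{addition_thm} evaluated at $x=y$: since $P_\ell^{(n-1)/2}(1) = d_\ell\cdot \tfrac{(n-1)\omega_n}{2\ell+n-1}$ (the normalization at $t=1$), the inner sum equals $\tfrac{2\ell+n-1}{(n-1)\omega_n}P_\ell^{(n-1)/2}(1) = d_\ell$. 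Hence
\[
\|g\|_{H_\sigma}^2 = \frac{1}{\omega_n}\sum_{\ell=0}^\infty (1+\lambda_\ell)^\sigma \hat\phi_\ell^2\, d_\ell.
\]
Now I would insert the hypotheses: $\hat\phi_\ell \le C(1+\lambda_\ell)^{-\tau}$ for $\ell \ge L+1$ (the finitely many low-order terms are harmless and always summable), $\lambda_\ell = \ell(\ell+n-1) \sim \ell^2$, and $d_\ell = \calo(\ell^{n-1})$ from \eqref{dim_ell}. This gives a tail bounded by a constant times $\sum_\ell \ell^{2\sigma - 4\tau + n - 1}$, which converges precisely when $2\sigma - 4\tau + n - 1 < -1$, i.e. when $\sigma < 2\tau - \tfrac{n}{2}$. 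Taking $\sigma = 2\tau - \tfrac{n}{2} - \epsilon$ yields exponent $-1 - 2\epsilon < -1$, so the series converges and $g \in H_{2\tau - n/2 - \epsilon}$, as claimed. Since the resulting bound on $\|g\|_{H_\sigma}$ is independent of $y$, the conclusion $\phi \in H_{2\tau - n/2 - \epsilon}$ follows.

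I expect no serious obstacle here; the proof is essentially a convergence test for a $p$-series after the addition formula collapses the degree-$\ell$ block. The one point requiring care is the correct normalization of the ultraspherical polynomial at $t=1$, which must match the convention in \eqref{addition_thm}; getting the inner sum over $k$ to equal $d_\ell$ exactly (rather than up to an uncontrolled constant) is what ensures the threshold comes out to the sharp value $2\tau - n/2$. The strictness of the inequality $\sigma < 2\tau - n/2$, and hence the necessity of the $\epsilon > 0$, is exactly the failure of the borderline series $\sum \ell^{-1}$ to converge, which is why the statement cannot be improved to include the endpoint.
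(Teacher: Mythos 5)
Your proof is correct, and it is worth noting that the paper itself states Proposition~\ref{phi_sobolev_space} as a ``well-known fact'' \emph{without} proof, so there is no in-paper argument to compare against; what you have written is precisely the standard argument the authors are implicitly invoking. The structure is right: the coefficients of $g(x)=\phi(x\cdot y)$ are $\hat g_{\ell,k}=\hat\phi_\ell Y_{\ell,k}(y)$ by \eqref{zonal_kernel} and orthonormality, the degree-$\ell$ block collapses via the addition formula \eqref{addition_thm} at $x=y$, and the hypothesis $\hat\phi_\ell\le C(1+\lambda_\ell)^{-\tau}$ together with $\lambda_\ell\sim\ell^2$, $d_\ell=\calo(\ell^{n-1})$ reduces everything to the $p$-series $\sum_\ell \ell^{2\sigma-4\tau+n-1}$, giving exactly the threshold $\sigma<2\tau-n/2$; the finitely many terms $\ell\le L$ in the conditionally positive definite case are indeed harmless. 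One small internal inconsistency: you assert $P_\ell^{(n-1)/2}(1)=d_\ell\,\tfrac{(n-1)\omega_n}{2\ell+n-1}$ and hence that the inner sum equals $d_\ell$, but your displayed norm carries an extra factor $1/\omega_n$. The correct values are $P_\ell^{(n-1)/2}(1)=\tfrac{(n-1)d_\ell}{2\ell+n-1}$ and $\sum_k|Y_{\ell,k}(y)|^2=d_\ell/\omega_n$ (so the display is right and the sentence is off by $\omega_n$); the cleanest way to see this, avoiding Gegenbauer normalizations entirely, is to note that the addition formula makes $\sum_k|Y_{\ell,k}(y)|^2$ constant in $y$, while integrating it over $\sph^n$ gives $d_\ell$ by orthonormality. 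Since $\omega_n$ is a fixed constant, this slip has no effect on convergence or on the conclusion.
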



\section{Highly Localized ``Small Footprint'' Bases}
\label{small_footprint_bases}

\emph{Surface splines}, or \emph{polyharmonic kernels}, are special conditionally positive definite SBFs, and are a key ingredient in the kernel methods presented here. While they can be defined for any $\sph^n$ \cite{HNW_3_2011}, we will restrict our attention to the case of $\sph^2$ \cite{FHNWW2012}. Their explicit forms are given below: 

\begin{equation}\label{TPS}
\left.
\begin{array}{l}
\phi_m(t)= (-1)^{m}(1-t)^{m-1}\log(1-t), \ 1< m \in\nats, \\ [10pt]
\ \hat \phi_{m,\ell}=
  C_m\frac{\Gamma(\ell-m+1)}{\Gamma(\ell+m+1)}\sim \ell^{-2m} \sim \lambda_\ell^{-m}, \ \ell>m-1.
\end{array}
\right\}
\end{equation}
Here $C_m=2^{m+1}\pi \Gamma(m)^2$. These kernels are conditionally positive definite, and interpolation with them will reproduce $\Pi_{m-1}$. The space $\caln$ associated with $\phi_m$ is, up to norm equivalence, the Sobolev space $H_m(\sph^2)$ modulo  $\Pi_{m-1}$. Also, since $\hat \phi_{m,\ell} \sim \ell^{-2m}$, it is easy to show that $\phi_m \in H_{2m-1-\epsilon}(\sph^2)$, $\epsilon >0$. Furthermore, this implies that the approximation space for $\phi_m$ satisfies
\[
V_{\phi_m,X} := V_{\phi_m, m-1,X} \subset H_{2m-1-\epsilon}(\sph^2), \ \forall\ \epsilon >0.
\]

\subsection{Lagrange functions}
\label{lagrange_functions}

We can form a basis for $V_{\phi_m,X}$ using \emph{Lagrange functions} or \emph{cardinal functions}. A Lagrange function $\chi_\xi$ is defined as the unique interpolant from $V_{\phi_m,X}$ that satisfies $\chi_\xi(\eta)=\delta_{\xi,\eta}$. Since $\chi_\xi\in V_{\phi_m,X}$, it has the form
\begin{equation}
\label{chi_xi_expan}
\chi_\xi = \sum_{\zeta\in X} \alpha_{\xi,\zeta}\phi_m((\cdot)\cdot \zeta)+p_\xi, \ p_\xi \in \Pi_{m-1}, \ \sum_{\zeta\in X} \alpha_{\xi,\zeta}\,p(\zeta) =0\ \forall \ p\in \Pi_{m-1}.
\end{equation}
Interpolation using the $\chi_\xi$'s is simple: If $f$ is a continuous function on $\sph^2$, with $f|_X$ given, then $
I_{\phi_m,X}f = \sum_{\xi\in X}f(\xi)\chi_\xi$. 

There are two important properties of the Lagrange functions constructed from the $\phi_m$'s. First, they are well localized in space. Indeed, $\chi_\xi(x)$ decays exponentially in $\dist(x,\xi)$. Second, they have a small ``footprint'' in the set of basis elements. Again, the coefficients $\alpha_{\xi,\zeta}$ decay exponentially in  $\dist(\xi,\zeta)$. Each $\chi_\xi$ is effectively using only a small number of kernels from the set $\{\phi_m((\cdot)\cdot \xi) \colon \xi \in X\}$; i.e, $\chi_\xi$ has a small ``footprint'' in the set of kernels. The precise result is stated below:

\begin{theorem}[{\cite[Theorem~5.3]{FHNWW2012}}]
\label{main} 
Let $\rho>0$ be a fixed mesh ratio and let $\nats \ni m  \ge 2$. There exist constants $h^*$, $\nu$, $c_1$, $c_2$ and $C$, depending only on $m$ and $\rho$,
so that if $h_X\le h^*$, then $\chi_{\xi}$ given in \eqref{chi_xi_expan} has these properties:
\begin{align}
 |\chi_{\xi}(x)| &\le
  C \exp\left(-\nu\frac{\d(x,\xi)}{h_X}\right), \label{lagrange_decay}\\
|\alpha_{\zeta,\xi}|
& \le C q_X^{2-2m} \exp{\left(-\nu
      \frac{\d(\xi,\zeta)}{h_X}\right)}, \label{Coeff}
  \\[3pt]
c_1q_X^{2/p} \|\bfbeta\|_{\ell_p(X)} &\le \big\|\sum_{\xi\in\Xi} \beta_{\xi}
  \chi_{\xi}\big\|_{L^p(\sph^2)} \le c_2 q_X^{2/p} \|\bfbeta\|_{\ell_p(X)}.
  \quad \label{p_stability}
\end{align}
\end{theorem}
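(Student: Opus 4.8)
The three estimates are not independent, so the plan is to treat \eqref{lagrange_decay} as the core result and then harvest \eqref{Coeff} and \eqref{p_stability} from it. The starting point is the variational characterization of the Lagrange function: among all $u\in H_m(\sph^2)$ with $u|_X=\bfe_\xi$, the function $\chi_\xi$ in \eqref{chi_xi_expan} minimizes the native-space seminorm $|\cdot|_\caln$, which is equivalent to the $H_m$-seminorm modulo $\Pi_{m-1}$. This is just the orthogonality that defines the minimal-norm interpolant. From it I would first record an energy bound: testing against a smooth bump of height $1$ and width $\sim h_X$ centered at $\xi$ (which interpolates the data $\bfe_\xi$ up to the polynomial correction) yields $|\chi_\xi|_\caln\le C q_X^{1-m}$, the correct scaling for a height-one, width-$q_X$ feature differentiated $m$ times.

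\textbf{Exponential decay of $\chi_\xi$.} This is the heart of the argument. I would split $\sph^2$ into geodesic annuli $A_j=\{x:\ jR\le \dist(x,\xi)<(j+1)R\}$ with $R\sim h_X$ and track the tail energy $E_j:=|\chi_\xi|_{\caln,\,\sph^2\setminus B(\xi,jR)}^2$. Away from $\xi$ the function $\chi_\xi$ vanishes on $X$, so I can build a competitor that agrees with $\chi_\xi$ inside $B(\xi,jR)$, is cut off to zero outside $B(\xi,(j+1)R)$, and still interpolates the (zero) far-field data. Minimality of $|\chi_\xi|_\caln$ forces the energy saved in the far field to be dominated by the energy the cutoff introduces in the transition annulus $A_j$; estimating the latter requires bounding lower-order Sobolev norms of $\chi_\xi$ on $A_j$ by $|\chi_\xi|_{H_m,A_j}$, which is exactly where the Zeros Lemma (\cref{frac_zeros_lemma}) and a companion local inverse (Bernstein) inequality on $V_{\phi_m,X}$ enter. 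Carried out carefully this produces a self-improving recursion
\[
E_{j+1}\le \theta\, E_j, \qquad \theta<1,
\]
valid once $h_X\le h^*$, hence geometric decay of the local energy. Sobolev embedding $H_m(\sph^2)\hookrightarrow C(\sph^2)$ applied annulus-by-annulus (legitimate since $m\ge 2>1=n/2$) converts geometric energy decay into the pointwise bound \eqref{lagrange_decay}, with the rate $\nu$ and the constant $C$ depending only on $m$ and $\rho$ through $\theta$.

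\textbf{Coefficient decay and $L^p$ stability.} For \eqref{Coeff} I would exploit that the coefficients are the ``operator side'' of $\chi_\xi$: applying the order-$2m$ operator whose inverse is $\phi_m$ (morally $(I-\Delta)^m$) collapses $\sum_\zeta \alpha_{\xi,\zeta}\phi_m((\cdot)\cdot\zeta)$ to the weighted point masses $\sum_\zeta\alpha_{\xi,\zeta}\delta_\zeta$, so $\alpha_{\xi,\zeta}$ inherits the spatial decay of $\chi_\xi$; the prefactor $q_X^{2-2m}$ is the scaling of that operator at length scale $q_X$ (it sends the height-one, width-$q_X$ bump to mass $q_X^{-2m}\cdot q_X^{2}$ per node). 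Making this rigorous means re-running the annulus/duality estimate for the coefficient functional rather than invoking decay of a matrix inverse, since $\phi_m$ itself does not decay. For \eqref{p_stability}, the upper bound follows from \eqref{lagrange_decay}: each $\|\chi_\xi\|_{L^p}\sim q_X^{2/p}$, and the exponential tails give bounded overlap, so a Schur/Young estimate on the near-diagonal interaction yields $\|\sum_\xi\beta_\xi\chi_\xi\|_{L^p}\le c_2 q_X^{2/p}\|\bfbeta\|_{\ell_p}$. The lower bound uses the cardinal property $\chi_\xi(\eta)=\delta_{\xi,\eta}$ together with a Marcinkiewicz--Zygmund sampling inequality for the space $V_{\phi_m,X}$ (again from quasi-uniformity and the inverse inequality), which controls $q_X^{2/p}\|\bfbeta\|_{\ell_p}$ by $\|\sum_\xi\beta_\xi\chi_\xi\|_{L^p}$.

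\textbf{Main obstacle.} The delicate point is the contraction recursion for $E_j$: constructing the cutoff competitor so that it genuinely reproduces the far-field data while keeping the transition-annulus energy bounded by a factor $\theta<1$ times the tail, \emph{uniformly as $h_X\to 0$}. This forces the interplay of $q_X$, $h_X$, and the fixed mesh ratio $\rho$ to be tracked explicitly, and it is complicated by the conditional positive definiteness of $\phi_m$, i.e.\ the presence of the polynomial part $p_\xi\in\Pi_{m-1}$, which must be accommodated in both the seminorm minimization and the local cutoffs. Everything else is a careful but routine deployment of the Zeros Lemma, Bernstein inequalities, and Sobolev embedding on the sphere.
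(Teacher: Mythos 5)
The paper itself offers no proof of this theorem: it is imported verbatim from \cite[Theorem~5.3]{FHNWW2012}, and the proof given there follows essentially the strategy you outline --- the minimal-seminorm characterization of $\chi_\xi$, a ``bulk chasing'' iteration over geodesic annuli using cutoff competitors and local zeros estimates to force geometric decay of the tail energy, Sobolev embedding to convert energy decay into the pointwise bound \eqref{lagrange_decay}, and decay-plus-sampling (Marcinkiewicz--Zygmund type) arguments for \eqref{p_stability}, with the polynomial part $p_\xi\in\Pi_{m-1}$ handled inside the semi-Hilbert-space framework exactly where you flag it as the main obstacle. The one notable difference is \eqref{Coeff}: rather than your distributional-operator heuristic, the source exploits the identity $\alpha_{\zeta,\xi}=\langle \chi_\zeta,\chi_\xi\rangle_\caln$ (the native-space inner product of two Lagrange functions), so the coefficients inherit exponential decay directly from the localized energy estimates, and the prefactor $q_X^{2-2m}$ appears as the product of the two seminorm bounds of size $q_X^{1-m}$.
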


In addition to the various bounds above, we will also need a bound on $\nabla \chi_\xi$, the covariant derivative of $\chi_\xi$. The lemma below will be needed to obtain this bound, as well as several others in the sequel.

\begin{lemma}
Let $x\in \sph^2$ be fixed. Then, there is a constant $C$ that is independent of $\nu$ and the properties of $X$ for which we have
\begin{equation}\label{basic_sum}
\sum_{\xi\in X} e^{-\frac{\nu}{h_X} \d(x,\xi)}  <  \frac{C\rho_X^2}{(1 - e^{-\nu})^2}.
\end{equation}
In addition, if $B(x,r_0)$ is the ball of radius $r_0$ and center $x$, then
\begin{equation}\label{truncated_sum}
\sum_{\xi\in X\cap B(x,r_0)^\complement} e^{-\frac{\nu}{h_X} \d(x,\xi)} <  C \rho_X^2\frac{ n_0e^{-(n_0-1)\nu} }{(1 - e^{-\nu})^2}, \ n_0=\lceil r_0/h_X\rceil. 
\end{equation}
\end{lemma}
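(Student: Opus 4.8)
The plan is to decompose $X$ into geodesic annuli of width $h_X$ about the fixed point $x$, bound the number of centers in each annulus by a packing argument based on the separation radius, and then sum a geometric-type series. For $k\ge 1$ set $A_k:=\{\xi\in X:(k-1)h_X\le \dist(x,\xi)<kh_X\}$, so that $X=\bigcup_{k\ge1}A_k$ and every $\xi\in A_k$ contributes at most $e^{-\nu(k-1)}$ to the sum in \eqref{basic_sum}. The entire lemma then hinges on a single uniform estimate: writing $N_k$ for the number of centers in $A_k$, I claim
\[
N_k\le C\,k\,\rho_X^2,
\]
with $C$ an absolute constant independent of $\nu$ and of $X$.

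To establish this counting bound I would argue by area comparison. Distinct centers are separated by at least $2q_X$, so the geodesic balls $\{B(\xi,q_X)\}_{\xi\in A_k}$ are pairwise disjoint; since $\rho_X\ge 1$ forces $q_X\le h_X$, each such ball lies inside the enlarged annulus $\{y:(k-2)h_X\le \dist(x,y)\le(k+1)h_X\}$ (with the inner radius clipped to $0$ when $k\le 2$). On $\sph^2$ the area of $\{r_1\le\dist(x,\cdot)\le r_2\}$ equals $2\pi(\cos r_1-\cos r_2)\le \pi(r_2^2-r_1^2)$, which for the enlarged annulus is at most a constant times $k\,h_X^2$; meanwhile each disjoint ball has area $2\pi(1-\cos q_X)\ge \tfrac{4}{\pi}q_X^2$, using $1-\cos r\ge \tfrac{2}{\pi^2}r^2$ on $[0,\pi]$. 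Dividing the annulus area by the per-ball area yields $N_k\le C k\rho_X^2$ with a universal $C$.

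With the counting bound in hand, both displays follow by elementary summation. For \eqref{basic_sum},
\[
\sum_{\xi\in X}e^{-\frac{\nu}{h_X}\dist(x,\xi)}\le\sum_{k\ge1}N_k\,e^{-\nu(k-1)}\le C\rho_X^2\sum_{k\ge1}k\,e^{-\nu(k-1)}=\frac{C\rho_X^2}{(1-e^{-\nu})^2},
\]
using the closed form $\sum_{k\ge1}k\,r^{k-1}=(1-r)^{-2}$ with $r=e^{-\nu}$. For \eqref{truncated_sum}, every $\xi$ with $\dist(x,\xi)\ge r_0$ lies in some $A_k$ with $k\ge n_0$, where $n_0=\lceil r_0/h_X\rceil$, so the same chain restricts to the tail $\sum_{k\ge n_0}k\,e^{-\nu(k-1)}$. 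Factoring out $e^{-(n_0-1)\nu}$ and reindexing gives $e^{-(n_0-1)\nu}\big(\tfrac{e^{-\nu}}{(1-e^{-\nu})^2}+\tfrac{n_0}{1-e^{-\nu}}\big)\le 2n_0\,e^{-(n_0-1)\nu}(1-e^{-\nu})^{-2}$, and absorbing the factor $2$ into $C$ produces the stated bound.

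I expect the main obstacle to be the packing estimate $N_k\le C k\rho_X^2$: one must extract clean constants that are genuinely independent of $\nu$ and of the geometry of $X$, which requires the two-sided comparison $\tfrac{2}{\pi^2}r^2\le 1-\cos r\le \tfrac12 r^2$ on the curved sphere together with careful handling of the clipped inner radius for small $k$. Once that geometric input is secured, the remainder is routine series manipulation.
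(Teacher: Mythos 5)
Your proof is correct and follows essentially the same route as the paper's: both decompose the sphere into annuli of width $h_X$ centered at $x$, bound the number of centers in the $k$-th annulus by $Ck\rho_X^2$ via an area comparison, and then sum the series $\sum_{k\ge 1} k e^{-\nu(k-1)}=(1-e^{-\nu})^{-2}$, handling the tail sum for the second display with the same observation $n_0-(n_0-1)e^{-\nu}\le n_0$ that the paper uses. The only distinction is one of rigor, not of method: you justify the per-annulus count with an explicit packing argument (disjoint geodesic balls of radius $q_X$ and two-sided bounds on spherical cap areas), whereas the paper asserts the cardinality bound as an asymptotic $\sim$ heuristic.
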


\begin{proof}
Divide the sphere into bands of width $\sim h_X$, center $x$, and outer radius $\sim n h_X$, $n\ge 1$. The sum then satisfies the inequality
\[
\begin{aligned}
\sum_{\xi\in X} e^{-\frac{\nu}{h_X} \d(x,\xi)} &=  \sum_{n=1}^{n_{max}}  \sum_{x\in \text{band}_n\cap X} e^{-\frac{\nu}{h_X} d(x,\xi)}\\
&< \sum_{n=1}^{n_{max}} \# (\text{band}_n\cap X) e^{-(n-1)\nu },
\end{aligned}
\]
where $n_{max}\sim \pi/h_X$. The area of $\text{band}_n$ is $\sim nh_X^2$. Consequently, we have that cardinality $\#(\text{band}_n\cap X)$ is $\sim nh_X^2/q_X^2=n\rho_X^2$. Using this in the equation above yields
\[
\sum_{\xi\in X} e^{-\frac{\nu}{h_X} d(x,\xi)} < C\rho_X^2 \sum_{n=1}^\infty n e^{-(n-1)\nu }.
\]
Summing the series on the right above yields \eqref{basic_sum}. To obtain \eqref{truncated_sum}, we sum the series $\sum_{n=n_0}^\infty n e^{-(n-1)\nu }$ and use the fact that $n_0-(n_0-1)e^{-\nu}<n_0$.
\end{proof}

\begin{theorem} 
\label{grad_estimates}
Adopt the notation of the Theorem~\ref{main}. There exists a constant $C=C(\rho,m)$ such that
\begin{equation}
\label{grad_bnd_chi_xi}
|\nabla \chi_\xi(x)| \leq Cq_X^{-1} e^{-\frac{\nu}{h_X} d(x,\xi)}.
\end{equation}
In addition, $\|\nabla \chi_\xi\|_{L^\infty} \le Cq_X^{-1}$. Finally, 
\begin{equation}
\label{covariant_lebesgue_const}
\Lambda_1 := \max_{x\in \sph^2}\sum_{\xi \in X} |\nabla \chi_\xi(x)| < C\rho_X^2 q_X^{-1} \frac{1}{(1 - e^{-\nu})^2}.
\end{equation}
\end{theorem}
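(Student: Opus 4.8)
The plan is to prove the pointwise estimate \eqref{grad_bnd_chi_xi} first, since the other two assertions follow from it with almost no further work. Once \eqref{grad_bnd_chi_xi} is in hand, the bound $\|\nabla\chi_\xi\|_{L^\infty}\le Cq_X^{-1}$ comes from taking the supremum over $x$ and using $e^{-\frac{\nu}{h_X}\d(x,\xi)}\le 1$. Likewise, the covariant Lebesgue constant \eqref{covariant_lebesgue_const} comes from summing \eqref{grad_bnd_chi_xi} over $\xi\in X$ and invoking \eqref{basic_sum}: this gives $\sum_{\xi}|\nabla\chi_\xi(x)|\le Cq_X^{-1}\sum_{\xi} e^{-\frac{\nu}{h_X}\d(x,\xi)}< Cq_X^{-1}\rho_X^2/(1-e^{-\nu})^2$ uniformly in $x$, which is exactly \eqref{covariant_lebesgue_const} after maximizing over $x$. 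So the whole theorem reduces to \eqref{grad_bnd_chi_xi}.

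For \eqref{grad_bnd_chi_xi} the idea is to trade the covariant derivative for a factor $q_X^{-1}$ by a Bernstein--Markov inequality for the approximation space, and then to feed in the exponential sup-decay of $\chi_\xi$ from Theorem~\ref{main}. Concretely, I would use a local Bernstein inequality of the form $|\nabla s(x)|\le Cq_X^{-1}\sup_{y\in B(x,\kappa h_X)}|s(y)|$, valid for every $s\in V_{\phi_m,X}$ and every $x\in\sph^2$, with $\kappa$ and $C$ depending only on $m$ and $\rho$. Applying it with $s=\chi_\xi$ and using \eqref{lagrange_decay} on the cap $B(x,\kappa h_X)$ gives $\sup_{y\in B(x,\kappa h_X)}|\chi_\xi(y)|\le Ce^{\nu\kappa}\exp(-\frac{\nu}{h_X}\d(x,\xi))$, since every $y$ in that cap obeys $\d(y,\xi)\ge\d(x,\xi)-\kappa h_X$; the factor $e^{\nu\kappa}$ is absorbed into $C$, so the rate $\nu$ is preserved. (For $x$ within $\kappa h_X$ of $\xi$ one instead bounds the sup by the $O(1)$ value from \eqref{lagrange_decay} and compares against $e^{-\frac{\nu}{h_X}\d(x,\xi)}\ge e^{-\nu\kappa}$.) Quasi-uniformity, $h_X=\rho_X q_X\le\rho q_X$, is what lets one pass freely between $h_X^{-1}$ and $q_X^{-1}$ at the cost of a constant depending only on $\rho$, giving the stated $q_X^{-1}$ scaling.

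The main obstacle is the Bernstein--Markov inequality itself, with its correct $q_X^{-1}$ scaling. A naive estimate is not enough: differentiating the kernel expansion \eqref{chi_xi_expan} termwise and inserting the coefficient decay \eqref{Coeff} together with the crude but correct uniform bound $|\nabla_x\phi_m(x\cdot\zeta)|\le C_m$ — where the logarithmic singularity of $\phi_m'$ at $x=\zeta$ is cancelled by the factor $\sin\d(x,\zeta)$ from the tangential gradient of $x\cdot\zeta$, so that $(1-t)^{m-2}|(m-1)\log(1-t)+1|\sqrt{1-t^2}$ stays bounded for $m\ge 2$ — leads, via \eqref{basic_sum}, to a bound of order $q_X^{2-2m}\rho_X^2$, which both carries the wrong (far larger) power of $q_X$ and, worse, retains no decay in $\d(x,\xi)$. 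The sharp power and the decay genuinely come from cancellation among the $\alpha_{\xi,\zeta}$, which is precisely what a Bernstein inequality encodes. I would supply the needed local inequality either by transferring the Bernstein estimates already available for $V_{\phi_m,X}$, or by a scaling argument: on a cap of radius $\sim h_X$ the sphere is uniformly comparable to a flat disk, and at that scale elements of $V_{\phi_m,X}$ behave like functions of bandwidth $\sim q_X^{-1}$, for which the first-derivative-to-sup-norm Markov factor is $\sim q_X^{-1}$. Making this transfer rigorous, with constants uniform over $\cF_\rho$, is the technical heart; everything else is bookkeeping.
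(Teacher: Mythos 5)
Your reduction of the theorem to the pointwise bound \eqref{grad_bnd_chi_xi} is correct and matches what the paper leaves implicit: the $L^\infty$ bound follows by taking suprema, and \eqref{covariant_lebesgue_const} follows by summing \eqref{grad_bnd_chi_xi} over $\xi$ and invoking \eqref{basic_sum}. The gap is in the core step. Your entire proof of \eqref{grad_bnd_chi_xi} rests on a \emph{local} Markov--Bernstein inequality $|\nabla s(x)| \le C q_X^{-1}\sup_{y\in B(x,\kappa h_X)}|s(y)|$ for all $s\in V_{\phi_m,X}$, which you do not prove, and neither of your two suggested routes to it works. The Bernstein estimates actually available for these spaces (the ones the paper itself uses, \cite[Theorem~6.1]{mhaskar-etal-2010}) are \emph{global} norm inequalities of the form $\|s\|_{H_\gamma}\le Cq_X^{-\gamma}\|s\|_{L^p}$; applied to $\chi_\xi$ they yield at best $\|\nabla\chi_\xi\|_{L^\infty}\le Cq_X^{-1}$ with no decay in $\d(x,\xi)$, and a global inequality cannot be localized to a cap without precisely the kind of decay information you are trying to establish. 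The flat-disk scaling heuristic fares no better: elements of $V_{\phi_m,X}$ are not band-limited (their spherical harmonic coefficients decay only like $\ell^{-2m}$), their restriction to a cap of radius $\sim h_X$ still involves kernels centered at \emph{all} of $X$, and the kernels themselves have limited smoothness (second derivatives of $\phi_m$ have a logarithmic singularity at the center for $m=2$), so ``behaves like bandwidth $q_X^{-1}$'' is not a statement you can invoke. Worse, any rigorous proof of your local inverse inequality for a kernel space seems to require control of $\nabla\chi_\xi$ with spatial decay in the first place --- the very thing being proved --- so the route is close to circular.

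The paper avoids all of this: the same source behind Theorem~\ref{main}, namely \cite[Theorem~5.3]{HNW_3_2011}, also provides a H\"older estimate (with exponent $\epsilon=1$) that already carries the exponential factor,
\begin{equation*}
|\chi_\xi(x)-\chi_\xi(y)| \le C\,\frac{\d(x,y)}{q_X}\, e^{-\frac{\nu}{h_X}\d(x,\xi)} ,
\end{equation*}
and the gradient bound follows by dividing by $\d(x,y)$, letting $y\to x$ along a geodesic with unit tangent $\hat t$ to get $|D_{\hat t}\chi_\xi(x)|\le Cq_X^{-1}e^{-\frac{\nu}{h_X}\d(x,\xi)}$, and maximizing over directions. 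So the fix for your argument is to replace the unproven inverse inequality with this citable Lipschitz estimate for the Lagrange functions themselves; your bookkeeping for the remaining two assertions can then stand as written.
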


\begin{proof}
The H\"{o}lder estimate given in  \cite[Theorem~5.3]{HNW_3_2011}, with $\epsilon = 1$, is
\[
|\chi_\xi(x)-\chi_\xi(y)| \leq C \frac{d(x,y)}{q_X} e^{-\frac{\nu}{h_X} d(x,\xi)}.
\]
Fixing $x$ and dividing by $d(x,y)$ yields
$$\bigg| \frac{\chi_\xi(x)-\chi_\xi(y)}{d(x,y)}\bigg| \leq C q_X^{-1} e^{-\frac{\nu}{h_X} d(x,\xi)}.$$
Let $\hat{t}$ be a unit tangent vector based at $x$. Choose $y$ to be a point along the geodesic starting at $x$ with tangent $\hat{t}$. Then, 
\[
\lim_{d(x,y) \to 0} \bigg| \frac{\chi_\xi(x)-\chi_\xi(y)}{d(x,y)}\bigg| = |D_{\hat{t}}(\chi_\xi)(x)| \leq  Cq_X^{-1} e^{-\frac{\nu}{h_X} d(x,\xi)} .
\]
This holds for every direction $\mathbf t$. Since $\max_{\mathbf t} | D_{\mathbf t}(\chi_\xi)(x)| =|\nabla \chi_\xi(x)|$,  the bound  \eqref{grad_bnd_chi_xi} follows immediately. Obviously, we also have $\|\nabla \chi_\xi\|_{L^\infty} \le Cq_X^{-1}$.
\end{proof}

\begin{proposition} 
\label{sobolev_norm_chi_xi_chi_eta}
Adopt the notation and assumptions of Theorem~\ref{main} and suppose that $a,b \in C^\infty$, $m\ge 2$ and $0<\epsilon < 2m-3$. Then, $b\chi_\xi \chi_\eta \in H_{2m-1-\epsilon}\cap L^\infty$. Moreover, for $h_X$ sufficiently small, there exists $C=C(\rho, m)$ such that 
\begin{equation}
\label{sobolev_bnd_chi_xi_chi_eta}
\| b\chi_\xi \chi_\eta \|_{H_{2m-1-\epsilon}} \le Ch_X^{2+\epsilon - 2m} \|b\|_{H_{2m}},
\end{equation}
\begin{equation}
\label{sobolev_bnd_grad_dot_grad}
\| a\nabla \chi_\xi \cdot \nabla \chi_\eta \|_{H_{2m-\epsilon-2}} \le Ch_X^{1+\epsilon - 2m}\|a\|_{H_{2m}}.
\end{equation}
\end{proposition}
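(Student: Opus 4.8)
The plan is to peel the statement down to a single estimate on one Lagrange function and then invoke the product and multiplication structure of Sobolev spaces. Throughout, write $s=2m-1-\epsilon$. The hypothesis $0<\epsilon<2m-3$ guarantees $s>2>1=n/2$, so $H_s(\sph^2)$ is a Banach algebra that embeds in $L^\infty$; consequently the membership $b\chi_\xi\chi_\eta\in H_s\cap L^\infty$ is automatic once the individual factors are controlled. The same hypothesis gives $2m-2-\epsilon>1=n/2$, so $H_{2m-2-\epsilon}$ is an algebra as well, which is what the gradient estimate \eqref{sobolev_bnd_grad_dot_grad} requires.

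First I would reduce both displayed inequalities to a single bound on $\chi_\xi$. For \eqref{sobolev_bnd_chi_xi_chi_eta} I would apply the fractional Leibniz (Sobolev algebra) inequality $\|fg\|_{H_s}\le C(\|f\|_{L^\infty}\|g\|_{H_s}+\|g\|_{L^\infty}\|f\|_{H_s})$, valid since $s>n/2$, together with the uniform bound $\|\chi_\xi\|_{L^\infty}\le C$ coming from \eqref{lagrange_decay}, to obtain $\|\chi_\xi\chi_\eta\|_{H_s}\le C(\|\chi_\xi\|_{H_s}+\|\chi_\eta\|_{H_s})$. I would then peel off the smooth factor $b$ with the Sobolev multiplication estimate $\|bw\|_{H_s}\le C\|b\|_{H_{2m}}\|w\|_{H_s}$, which holds because $2m>s$ and $2m>n/2$. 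Thus \eqref{sobolev_bnd_chi_xi_chi_eta} follows once I establish the single-function bound $\|\chi_\xi\|_{H_{2m-1-\epsilon}}\le Ch_X^{2+\epsilon-2m}$. The gradient estimate \eqref{sobolev_bnd_grad_dot_grad} reduces to the same bound: applying the algebra inequality in $H_{2m-2-\epsilon}$ componentwise to $\nabla\chi_\xi\cdot\nabla\chi_\eta$, using $\|\nabla\chi_\xi\|_{L^\infty}\le Cq_X^{-1}$ from Theorem~\ref{grad_estimates}, gives $\|\nabla\chi_\xi\cdot\nabla\chi_\eta\|_{H_{2m-2-\epsilon}}\le Cq_X^{-1}(\|\nabla\chi_\xi\|_{H_{2m-2-\epsilon}}+\|\nabla\chi_\eta\|_{H_{2m-2-\epsilon}})$; since a covariant derivative costs one order of smoothness, $\|\nabla\chi_\xi\|_{H_{2m-2-\epsilon}}\le C\|\chi_\xi\|_{H_{2m-1-\epsilon}}$, and with $q_X^{-1}\sim h_X^{-1}$ and $a$ peeled off as above one arrives at the power $h_X^{1+\epsilon-2m}$.

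The heart of the argument is therefore the single-function estimate. I would work from the kernel expansion \eqref{chi_xi_expan}. Since $\phi_m\in H_{2m-1-\epsilon}$ with a fixed norm (Proposition~\ref{phi_sobolev_space}), the triangle inequality combined with the coefficient decay \eqref{Coeff} and the summation Lemma \eqref{basic_sum} gives a preliminary bound $\|\chi_\xi\|_{H_{2m-1-\epsilon}}\le C\big(\sum_{\zeta\in X}|\alpha_{\xi,\zeta}|\big)\|\phi_m\|_{H_{2m-1-\epsilon}}\le Cq_X^{2-2m}$, the polynomial part $p_\xi$ being controlled separately via the finite dimensionality of $\Pi_{m-1}$ and $\|\chi_\xi\|_{L^\infty}\le C$. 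To gain the extra factor $h_X^{\epsilon}$, I would interpolate this high-order bound against the small $L^2$-mass $\|\chi_\xi\|_{L^2}\le Cq_X$ supplied by \eqref{p_stability} (with $p=2$ and $\bfbeta=\bfe_\xi$), tuning the interpolation exponent so the resulting power of $h_X$ is $1-s=2+\epsilon-2m$; the cancellation provided by the moment conditions $\sum_{\zeta\in X}\alpha_{\xi,\zeta}p(\zeta)=0$, $p\in\Pi_{m-1}$, is the mechanism that reconciles the small $L^2$-mass with the large high-order norm.

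The step I expect to be the main obstacle is precisely this extraction of the sharp power: the naive route through the kernel expansion loses a factor of $h_X^{\epsilon}$, and recovering it demands quantitative control of the way the $\epsilon$-dependent constant $\|\phi_m\|_{H_{2m-1-\epsilon}}\sim\epsilon^{-1/2}$ enters the interpolation weights, together with careful bookkeeping of the moment cancellation near the center $\xi$. Everything else—the Leibniz and multiplication inequalities, the $L^\infty$ and gradient bounds from Theorems~\ref{main} and \ref{grad_estimates}, and the geometric sums in the summation Lemma—is routine once that single estimate is secured.
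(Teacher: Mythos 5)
Your reductions are sound and in fact mirror the paper's own proof: the paper also uses the fractional Leibniz rule (Theorem~\ref{leibnitz_rule}), the uniform bounds $\|\chi_\xi\|_{L^\infty}\le c_2$ and $\|\chi_\xi\|_{L^2}\le c_2q_X$ from \eqref{p_stability}, the gradient bound $\|\nabla\chi_\xi\|_{L^\infty}\le Cq_X^{-1}$ from Theorem~\ref{grad_estimates}, and Corollary~\ref{grad_squared_bnd} to handle $\nabla\chi_\xi\cdot\nabla\chi_\eta$. The genuine gap is exactly the step you flag as ``the main obstacle'': the single-function estimate $\|\chi_\xi\|_{H_{2m-1-\epsilon}}\le Ch_X^{2+\epsilon-2m}$. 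The paper gets this in one line from a tool you never invoke, the Bernstein (inverse) inequality for the kernel space \cite[Theorem~6.1]{mhaskar-etal-2010}: for $g\in V_{\phi_m,X}$ and $0<\gamma<2m-1$ one has $\|g\|_{H_\gamma}\le Cq_X^{-\gamma}\|g\|_{L^2}$; taking $\gamma=2m-1-\epsilon$ and combining with $\|\chi_\xi\|_{L^2}\le c_2q_X$ gives the sharp power immediately. Your substitute mechanism does not produce it.

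To see concretely why your route fails: the expansion \eqref{chi_xi_expan} together with \eqref{Coeff}, \eqref{basic_sum}, and $\|\phi_m\|_{H_{2m-1-\epsilon'}}\sim(\epsilon')^{-1/2}$ gives, for every $\epsilon'>0$, the bound $\|\chi_\xi\|_{H_{2m-1-\epsilon'}}\le C(\epsilon')^{-1/2}q_X^{2-2m}$. Interpolating this against $\|\chi_\xi\|_{L^2}\le Cq_X$ at the level $2m-1-\epsilon$ forces the exponent $\theta=(2m-1-\epsilon)/(2m-1-\epsilon')$ and yields the power $1-\theta(2m-1)=2+\epsilon-2m-\epsilon'(2m-1-\epsilon)/(2m-1-\epsilon')$, i.e.\ a deficit of $q_X^{-\delta}$ with $\delta\sim\epsilon'$, which vanishes only as $\epsilon'\to0$ --- precisely where the constant $(\epsilon')^{-\theta/2}$ blows up. Optimizing $\epsilon'\sim1/|\log q_X|$ still leaves a spurious factor $\sqrt{|\log q_X|}$, so the estimate as stated in \eqref{sobolev_bnd_chi_xi_chi_eta} is not reachable by fixed-exponent interpolation between these two pieces of information. (One could in principle absorb the log by proving the lossy bound at a slightly smaller $\epsilon$ and then using the Sobolev embedding, but you do not carry this out; moreover the moment conditions $\sum_{\zeta\in X}\alpha_{\xi,\zeta}\,p(\zeta)=0$ that you appeal to play no role in such a spectral interpolation argument and cannot substitute for an inverse inequality.) The missing idea is the kernel-space Bernstein inequality itself; once it is quoted, the proof closes exactly along the lines of your first two paragraphs, with $q_X\sim h_X$ converting between the two mesh parameters.
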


\begin{proof} In the proof below we will need the inequalities $2m-\epsilon-1>2$ and $2m-\epsilon-2>1$, which follows easily from $0<\epsilon < 2m-3$. 

Theorem~\ref{leibnitz_rule} applies to $b\chi_\xi$, because $b \in C^\infty \subset H_{2m-1-\epsilon}\cap L^\infty$ and, by Theorem~\ref{main},  $\chi_\xi \in H_{2m-1- \epsilon}\cap L^\infty$.  Consequently, the three products $b\chi_\xi\ \chi_\eta$, $b\chi_\xi$ and $\chi_\xi\chi_\eta$ are in $H_{2m-1-\epsilon}\cap L^\infty$. A straightforward application of Theorem~\ref{leibnitz_rule}, equation \eqref{leibinitz_rule_bnd}, to the various products then results in this bound:
\[
\| b \chi_\xi \chi_\eta \|_{H_{2m-1-\epsilon}} \le C'\|b\|_{L^\infty} \big(\| \chi_\xi\|_{L^\infty} \|\chi_\eta \|_{H_{2m-1-\epsilon}} +  
\| \chi_\xi \|_{H_{2m-1 -\epsilon}} \|\chi_\eta\|_{L^\infty}\big) + C\| b\|_{H_{2m-1- \epsilon}} \|\chi_\xi\|_{L^\infty}\| \chi_\eta\|_{L^\infty}.
\]
By \eqref{p_stability}, with $p=\infty$, we have that both $\|\chi_\xi\|_{L^\infty}$ and $\|\chi_\eta\|_{L^\infty}$ are bounded by the constant $c_2$, because the corresponding $\bfbeta$'s have a single entry, $1$. Moreover, since $2m-1 - \epsilon >1$, the Sobolev embedding theorem and a standard inclusion inequality imply that $\|b\|_{L^\infty} \le C\| b\|_{H_{2m-1 - \epsilon}}\le C\| b\|_{H_{2m}}$. Inserting these in the previous inequality then yields
\begin{equation}
\label{triple_bound}
\| b \chi_\xi \chi_\eta \|_{H_{2m-1 - \epsilon}} \le Cc_2\|b\|_{H_{2m}}\big(\|\chi_\xi\|_{H_{2m-1-\epsilon}}+\|\chi_\eta \|_{H_{2m - 1 -\epsilon}}+ c_2\big)
\end{equation}
We will now employ a Bernstein inequality\footnote{The precise version of the theorem holds for a positive definite SBF. However, it is easy to modify it so that it will hold for a conditionally positive definite SBF.} \cite[Theorem~6.1]{mhaskar-etal-2010} that holds for functions in $V_{\phi_m,X}$. The parameters in the theorem are $\beta=2m$, from \eqref{TPS}, $p=2$, $n=2$, $\gamma=2m-1-\epsilon >2$ and $g=\chi_\xi$. The theorem then implies that 
$\| \chi_\xi \|_{H_{2m-1-\epsilon}} \le Cq_X^{1+\epsilon - 2m} \| \chi_\xi \|_{L^2}$. Moreover, if we set $p=2$ in \eqref{p_stability}, we also have both $\|\chi_\xi\|_{L^2}$ and $\|\chi_\eta\|_{L^2}$ bounded by $c_2 q_X$. Thus, $\| \chi_\xi \|_{H_{2m-1- \epsilon}} \le Cq_X^{2+\epsilon - 2m} $. Combining the various bounds above we arrive at $\| b \chi_\xi \chi_\eta \|_{H_{2m-1 - \epsilon}} \le Cc_2\|b\|_{H_{2m}}q_X^{2+\epsilon-2m}(2+c_2q_X^{2m-2-\epsilon})$. Since $q_X\ll 1$ and $2m-2-\epsilon>1$, we have $ \| b \chi_\xi \chi_\eta \|_{H_{2m-1 - \epsilon}} \le Cc_2\|b\|_{H_{2m}}q_X^{2+\epsilon-2m}$. From this, \eqref{sobolev_bnd_chi_xi_chi_eta} follows on observing that $q_X\sim h_X$.

To obtain the second bound, note that, by Corollary~\ref{grad_squared_bnd}, the conditions on $\chi_\xi, \chi_\eta$ imply that $\nabla \chi_\xi\cdot \nabla \chi_\eta \in H_{2m - \epsilon -2}\cap L^\infty$, since $2m - \epsilon -2>1$, and that those on $a$ are the ones used for $b$. Consequently, 
\[
\| a \nabla \chi_\xi\cdot \nabla \chi_\eta\|_{H_{2m - \epsilon -2}} \le C\|a\|_{H_{2m}} \big( \|\nabla \chi_\xi \cdot \nabla \chi_\eta\|_{H_{2m - \epsilon -2}} + \| \nabla \chi_\xi \cdot \nabla \chi_\eta\|_{L^\infty} \big).
\]
Since $2m-2-\epsilon >1$, we may again apply the Sobolev embedding theorem to obtain $\| \nabla \chi_\xi \cdot \nabla \chi_\eta\|_{L^\infty} \le C \|\nabla \chi_\xi \cdot \nabla \chi_\eta\|_{H_{2m - \epsilon -2}} $. Combining this with the previous inequality results in 
\begin{equation}
\label{a_dot_prod_bnd}
\| a \nabla \chi_\xi\cdot \nabla \chi_\eta\|_{H_{2m - \epsilon -2}} \le C\|a\|_{H_{2m}} \|\nabla \chi_\xi \cdot \nabla \chi_\eta\|_{H_{2m - \epsilon -2}} .
\end{equation}
To estimate the norm on the right we will use Corollary~\ref{grad_squared_bnd}. This implies that 
\[
\|\nabla \chi_\xi \cdot \nabla \chi_\eta\|_{H_{2m - \epsilon -2}} \le C\big(\|\chi_\xi\|_{H_{2m-1-\epsilon}}+\|\chi_\eta \|_{H_{2m-1-\epsilon}}\big)\big(\|\nabla \chi_\xi\|_{L^\infty}+\|\nabla \chi_\eta \|_{L^\infty}\big).
\]
We may use Proposition~\ref{grad_estimates} and the bounds on $\|\chi_\xi\|_{H_{2m-1-\epsilon}}$, $\|\chi_\eta\|_{H_{2m-1-\epsilon}}$ found above to obtain this:
\begin{equation}
\label{dot_prod_bnd}
\|\nabla \chi_\xi \cdot \nabla \chi_\eta\|_{H_{2m - \epsilon -2}} \le Ch_X^{2+\epsilon - 2m}q_X^{-1}\le C h_X^{1+\epsilon -2m} 
\end{equation}
Finally, using the bound from \eqref{dot_prod_bnd} in \eqref{a_dot_prod_bnd} yields \eqref{sobolev_bnd_grad_dot_grad}.
\end{proof}

\subsection{Quadrature formulas}
\label{quadrature_formulas}

Numerically computing the integrals that arise in any Galerkin method ultimately requires a quadrature formula. In the setting of a sphere and other homogeneous manifolds, kernel quadrature formulas \cite{FHNWW2013, hesse-et-al-2010,sommariva_womer2005} have been developed and analyzed. Let $f$ be continuous and consider the surface spline $\phi_m$ given in \eqref{TPS}. In addition, let $Y$ be a quasi-uniform set of points on $\sph^2$, which may be different from $X$. The quantities $q_Y$, $h_Y$, and $\rho_Y$, and cardinality $N_Y$ have their usual meanings. Using $\phi_m$, form the Lagrange functions $\tilde \chi_\zeta$, $\zeta\in Y$ corresponding to $Y$ and the interpolant $I_Y\!f\!=\sum_{\zeta \in Y} f(\zeta)\tilde \chi_\zeta$. The quadrature formula  is obtained integrating $I_Y\!f$:
\[
Q_Y(f) = \int_{\sph^2}I_Y\!f(x)d\mu(x) =  \sum_{\zeta \in Y} f(\zeta) w_\zeta, \quad w_\zeta :=\int_{\sph^2}\tilde \chi_\zeta(x)d\mu(x).
\]

We point out that a few of the weights $w_\zeta$ can be near zero or
become slightly negative in the case of arbitrary $Y$.  This is
usually \emph{not }the case for most quasi-uniform sets $Y$. (See the
discussion in \cite[Section 2.2.1]{FHNWW2013,
  sommariva_womer2005}). In fact, not only are the weights positive
for most sets, but they also satisfy the lower bound
\begin{equation}
\label{lower_bnd_wgt}
w_\zeta \ge Ch_Y^2.
\end{equation}
In the rest of our discussion, we will assume that
\eqref{lower_bnd_wgt} holds. The only situation where this assumption
comes into play will be in stability considerations of the discretized
version of the stiffness matrix.

Positive or not, the weights all satisfy an \emph{upper} bound;
namely,
\begin{equation}
\label{upper_bnd_wgt}
|w_\zeta| \le Ch_Y^2.
\end{equation}
Since $w_\zeta=\int_{\sph^2}\tilde \chi_\zeta(x)d\mu(x)$, we have that
$|w_\zeta|\le \| \tilde \chi_\zeta \|_{L_1(\sph^2)}$. To estimate the
right side, use \eqref{p_stability}, with $p=1$, $\beta_\zeta=1$ and
all of the other $\beta$'s equal to $0$. This gives us $\|
\bfbeta\|_{\ell_1} = 1$, and so $\| \tilde \chi_\zeta
\|_{L_1(\sph^2)}\le c_2q_Y^2\le c_2\rho_Y^{-2}h_Y^2$.

The salient feature of this quadrature formula is that the weights can
be obtained by solving a linear system of equations that is stable
and, while not sparse, has entries that decay rapidly as they move
away from the diagonal. For $m=2$, weights for a set $Y$ having
600,000 points were easily computed \cite[section 5]{FHNWW2013}.

Error estimates for the quadrature formula $Q_Y$ have been derived for
functions in various integer valued Sobolev spaces. However, we will
need stronger results. We begin with the proposition below, which
holds on $\sph^n$, $n\ge 2$. Consider a (conditionally) positive
definite SBF $\phi$ that satisfies $c(1+\lambda_\ell)^{-\tau} \le \hat
\phi_\ell \le C(1+\lambda_\ell)^{-\tau}$ for all $\ell\ge L+1$. The
novel feature of this result is that it uses a new version of the
``doubling trick,'' which is established in
Theorem~\ref{cond_pos_def_case}, to obtain higher convergence rates
for functions smoother than ones in the native space of $\phi$. The
result is this:

\begin{proposition} Let $\tau>n/2$, $2\tau\ge \mu>n/2$, and  $f\in H_{\mu}$. If $h_Y$ is sufficiently small,  then 
\begin{equation}
\label{quad_sobolev_est}
\bigg| \int_{\sph^n}f(x)d\mu - Q_Y(f)\bigg| \le C h^{\mu}_Y \|f\|_{H_{\mu}}.
\end{equation}
\end{proposition}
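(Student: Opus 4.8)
The plan is to reduce the quadrature error to an $L^2$ interpolation error and then invoke the generalized doubling trick. By the very definition of the weights, $w_\zeta = \int_{\sph^n}\tilde\chi_\zeta\,d\mu$, the rule integrates the interpolant exactly: $Q_Y(f) = \sum_{\zeta\in Y} f(\zeta) w_\zeta = \int_{\sph^n} I_Y f\, d\mu$. Hence the quadrature error is precisely the integral of the pointwise interpolation error,
\[
\int_{\sph^n} f\,d\mu - Q_Y(f) = \int_{\sph^n}\big(f - I_Y f\big)\,d\mu,
\]
and since $\sph^n$ has finite measure $\omega_n$, Cauchy--Schwarz gives $\big|\int_{\sph^n}(f-I_Yf)\,d\mu\big| \le \omega_n^{1/2}\,\|f - I_Y f\|_{L^2}$. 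It therefore suffices to establish the $L^2$ interpolation bound $\|f - I_Y f\|_{L^2}\le C h_Y^{\mu}\|f\|_{H_\mu}$ for $n/2 < \mu \le 2\tau$. Note that the hypothesis $\mu > n/2$ is exactly what guarantees, via the Sobolev embedding $H_\mu \hookrightarrow C(\sph^n)$, that $f$ is continuous and that $I_Y f$ is well defined.

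I would then split the range of $\mu$ according to whether $f$ is rougher or smoother than the native space $H_\tau$ of $\phi$. For $n/2 < \mu \le \tau$, the target lies in (or is rougher than) the native space, and the standard scattered-data interpolation estimate—of the type underlying the Zeros Lemma, \cite{Narcowich-etal-07-1}—yields $\|f - I_Y f\|_{L^2}\le C h_Y^{\mu}\|f\|_{H_\mu}$, using the quasi-uniformity of $Y$ and $h_Y$ small. For $\tau < \mu \le 2\tau$, $f$ is smoother than the native space, and the native-space estimate alone cannot produce the rate $h_Y^{\mu}$; here I would invoke the generalized doubling trick, Theorem~\ref{cond_pos_def_case}, which is stated precisely for a conditionally positive definite SBF $\phi$ with $\hat\phi_\ell \sim (1+\lambda_\ell)^{-\tau}$ and for targets whose smoothness lies between $\tau$ and the doubled value $2\tau$. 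It delivers exactly $\|f - I_Y f\|_{L^2}\le C h_Y^{\mu}\|f\|_{H_\mu}$. Combining the two regimes covers the whole interval $n/2 < \mu \le 2\tau$, and substituting into the Cauchy--Schwarz bound gives \eqref{quad_sobolev_est}.

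The genuinely substantive step is the smooth regime $\tau < \mu \le 2\tau$, i.e.\ the doubling trick itself; all of the analytic content there is packaged in Theorem~\ref{cond_pos_def_case}, proved in the appendix. Within the present proposition the remaining work is only to check that its hypotheses hold: that $\phi$ is conditionally positive definite with the stated two-sided spectral bound, that $Y$ is quasi-uniform with $h_Y$ sufficiently small, and that $\mu$ does not exceed the doubling ceiling $2\tau$ (which is exactly the upper constraint in the statement). The two reductions on which everything rests—the exactness of $Q_Y$ on $I_Y f$ and the passage from $L^1$ to $L^2$ on the finite-measure sphere—are routine.
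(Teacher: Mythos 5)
Your proposal is correct and follows essentially the same route as the paper: reduce the quadrature error to $\|f-I_Yf\|_{L^2}$ via exactness of $Q_Y$ on interpolants and Cauchy--Schwarz on the finite-measure sphere, then invoke the interpolation estimate, which is exactly the paper's proof (citing Theorem~\ref{cond_pos_def_case} with $\beta=0$). The only difference is cosmetic: your by-hand split into the regimes $n/2<\mu\le\tau$ and $\tau<\mu\le 2\tau$ is redundant, since Theorem~\ref{cond_pos_def_case} is stated for the full range $n/2<\mu\le 2\tau$ (that case distinction is carried out inside its proof, via Theorem~\ref{general_est_SBF}), and applying it once also sidesteps the issue that the rough-regime estimate of \cite{Narcowich-etal-07-1} needs adaptation to the conditionally positive definite setting.
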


\begin{proof}
Note that $\big|\int_{\sph^n} fd\mu - Q_Y(f)\big| \le \int_{\sph^n} |f-I_Y\!f |d\mu \le \omega_n^{1/2}\|f-I_Y\!f \|_{L^2}$, where $\omega_n$ is the volume of $\sph^n$. In addition, by Theorem~\ref{cond_pos_def_case}, with $\beta =0$, we have  $\|f-I_Y\!f \|_{L^2}\le C h^{\mu}_Y \|f\|_{H_{\mu}}$. Combining the two inequalities yields \eqref{quad_sobolev_est}.
\end{proof}

\section{Weak and Strong Solutions to $Lu=f$}
\label{Lu=f_prop}

In the section we will lay out the properties, assumptions and various aspects of weak and strong solutions to \eqref{pde_dyadic}. In local coordinates on $\sph^2$, this equation has the form,
\begin{align}
Lu = -\frac{1}{\sqrt{\det(g_{ij})}} \sum_{i,j}\frac{\partial}{\partial x^i} \sqrt{\det(g_{ij})} 
a^{ij}(x) \frac{\partial u}{\partial x^j} + b(x)u =f.
\label{Ltensor}
\end{align}
Here, $g_{ij}$ is the covariant form of the standard metric tensor $g$ on $\sph^2$; as usual, $g^{ij}=[g_{ij}]^{-1}$ are the contravariant components of $g$. The $a^{ij}$'s are contravariant components of a $C^\infty$, symmetric rank 2 tensor $a$ that is positive definite in the sense that there exist positive constants $c_1,c_2$ such that
\begin{align}
c_1 \sum_{i,j}g^{ij}(x)v_i v_j \leq \sum_{i,j} a^{ij}(x)v_i v_j \leq c_2 \sum_{i,j}g^{ij}(x)v_iv_j
\label{a_tensor_pos}
\end{align}
holds for all vectors $v$ in the tangent space at $x \in \sph^2$. The function $b(x)$ is $C^\infty$. In addition, we assume that there are constants $b_1,b_2$ such that, for all $x\in \sph^2$, $0<b_1 \le b(x) \le b_2$.   We note that in the case that $a=g$, this reduces to the case $Lu = -\Delta u + b u$.

With $L$ as given in \eqref{Ltensor} and  $f\in L^2(\sph^2)$, we can place $Lu=f$ into weak form by multiplying by $v \in H_1$  and integrating by parts to arrive at
\begin{align}
\langle u,v\rangle_a := \int_{\sph^2} \bigg(\sum_{i,j=1}^2 a^{ij} \frac{\partial u}{\partial x^i} \frac{\partial v}{\partial x^j}  + b u v \bigg)d\mu = \int_{\sph^2} fv d\mu := \ell(v).
\label{weaktensor}
\end{align}
By \eqref{a_tensor_pos} and the assumptions on $b(x)$, the bilinear form $\langle \cdot,\cdot \rangle_a $ satisfies 
\begin{align}
M_1\|u\|_{H_1}^2 =(c_1+b_1) \langle u,u\rangle_{H_1} \leq \underbrace{\langle u,u\rangle_a }_{\|u\|_a^2}\leq (c_2+b_2)  \langle u,u \rangle_{H_1} =M_2\|u\|_{H_1}^2
\label{a_tensor_norm}
\end{align}
A straightforward application of the Lax-Milgram theorem, together with $\ell(v)$ being a bounded linear functional on $H_1$, then yields the following result:

\begin{proposition}
\label{a_coercive}
The bilinear form $\langle \cdot,\cdot \rangle_a $ is coercive and bounded on $H_1$ and defines an inner product on $H_1$, with the norms $\|\cdot \|_a $ and $\| \cdot \|_{H_1}$ being equivalent. In addition, for $f\in L^2$, there is a unique $u\in H_1$ such that \eqref{weaktensor} is satisfied; that is, $u \in H_1$ weakly solves $Lu=f$. Finally, $\|u\|_{L^2}\le \|f\|_{L^2}$.
\end{proposition}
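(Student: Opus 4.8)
The plan is to prove Proposition~\ref{a_coercive} by a direct application of the Lax--Milgram theorem, exactly as the text preceding it suggests. The three hypotheses of Lax--Milgram on the Hilbert space $H_1$ are: (i) boundedness of the bilinear form $\langle\cdot,\cdot\rangle_a$, (ii) coercivity of $\langle\cdot,\cdot\rangle_a$, and (iii) boundedness of the linear functional $\ell$. The first two are essentially handed to us by the two-sided estimate \eqref{a_tensor_norm}, so the bulk of the work is organizational: unpacking what each Lax--Milgram conclusion gives and then squeezing out the final energy estimate $\|u\|_{L^2}\le\|f\|_{L^2}$.

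First I would verify the three hypotheses. Boundedness and coercivity follow immediately from \eqref{a_tensor_norm}: the upper bound $\langle u,u\rangle_a\le M_2\|u\|_{H_1}^2$ gives boundedness of the diagonal, and polarization (together with Cauchy--Schwarz applied to the symmetric positive form) upgrades this to $|\langle u,v\rangle_a|\le M_2\|u\|_{H_1}\|v\|_{H_1}$; the lower bound $M_1\|u\|_{H_1}^2\le\langle u,u\rangle_a$ is precisely coercivity. For the linear functional, $|\ell(v)|=|\int_{\sph^2}fv\,d\mu|\le\|f\|_{L^2}\|v\|_{L^2}\le\|f\|_{L^2}\|v\|_{H_1}$, so $\ell$ is bounded on $H_1$. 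Lax--Milgram then yields a unique $u\in H_1$ with $\langle u,v\rangle_a=\ell(v)$ for all $v\in H_1$, which is exactly the weak formulation \eqref{weaktensor}. That $\langle\cdot,\cdot\rangle_a$ is an inner product follows because it is symmetric (the $a^{ij}$ are symmetric and $b>0$), bilinear, and positive definite by the coercivity lower bound; norm equivalence with $\|\cdot\|_{H_1}$ is just \eqref{a_tensor_norm} rewritten.

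For the final estimate $\|u\|_{L^2}\le\|f\|_{L^2}$, I would test the weak equation against $v=u$ to obtain $\|u\|_a^2=\langle u,u\rangle_a=\ell(u)=\int_{\sph^2}fu\,d\mu\le\|f\|_{L^2}\|u\|_{L^2}$. The point now is to bound the left side below by $\|u\|_{L^2}^2$. Because $b(x)\ge b_1$ and the $a^{ij}$ term is nonnegative, we have $\|u\|_a^2\ge b_1\int_{\sph^2}u^2\,d\mu\ge\|u\|_{L^2}^2$ provided $b_1\ge 1$; more carefully, the stated constant requires that the coercivity constant for the $L^2$-norm be at least $1$. In fact, dropping the nonnegative gradient term gives $\|u\|_a^2\ge b_1\|u\|_{L^2}^2$, so combining with $\|u\|_a^2\le\|f\|_{L^2}\|u\|_{L^2}$ yields $b_1\|u\|_{L^2}^2\le\|f\|_{L^2}\|u\|_{L^2}$, hence $\|u\|_{L^2}\le b_1^{-1}\|f\|_{L^2}$.

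The one point I would scrutinize is the precise constant in the final bound: the clean statement $\|u\|_{L^2}\le\|f\|_{L^2}$ holds as written only if $b_1\ge 1$; otherwise the argument produces the factor $b_1^{-1}$. I expect the authors either intend $b_1\ge 1$ implicitly, or are content with the bound up to a constant; the essential mechanism, testing against $u$ and discarding the gradient term, is unaffected. This is the only genuinely delicate step; the rest is a bookkeeping exercise on the Lax--Milgram hypotheses, and I anticipate no real obstacle beyond correctly tracking which constant from \eqref{a_tensor_norm} controls the $L^2$-norm.
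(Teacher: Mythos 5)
Your proof is correct and takes essentially the same route as the paper: the paper's ``proof'' is precisely the text preceding the proposition---the two-sided estimate \eqref{a_tensor_norm} plus Lax--Milgram and the boundedness of $\ell$---and your derivation of the final estimate (test against $u$, discard the gradient term) is exactly how the paper itself obtains the $L^2$ bound in the proof of Proposition~\ref{regularity}, where it writes $\min(b)\|u\|_{L^2}^2\le\|u\|_a^2\le\|Lu\|_{L^2}\|u\|_{L^2}$. Your scrutiny of the constant is also warranted: the paper's own version of this estimate carries the constant $C=1/\min(b)$ (e.g.\ $u\equiv c$, $b\equiv b_1<1$, $a=g$, $f=b_1c$ shows the constant-one claim fails), so the statement $\|u\|_{L^2}\le\|f\|_{L^2}$ as written implicitly requires a normalization such as $b_1\ge 1$, and your bound $\|u\|_{L^2}\le b_1^{-1}\|f\|_{L^2}$ is the one that actually holds in general.
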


We now turn to the regularity of the weak solution to $Lu=f$.  \emph{A priori} estimates of the general type needed here may be found in the survey article by Mikhailets and Murach \cite[Theorem~6.6]{Mikhailets-Murach-2012}, along with references. They are, however, given for pseudo-differential operators. A simpler approach is to use the local regularity theorems in \cite[pgs.\ 261-269]{Folland_book_1976}, which  apply to open sets in $\RR^n$, and so to coordinate patches on $\sph^n$. Since the sphere is compact, they apply globally to $Lu=f$, and so we have the (standard) regularity result that we will use in the sequel.

\begin{proposition} 
\label{regularity}
Let $L$ be as described above. If $u$ is a distributional solution to $Lu=f$, where $f\in H_s$, $0\le s$, $s\in \RR$, then for any $t<s-1$ there is a constant $C_t>0$ such that $u\in H_{s+2}$ and $\|u\|_{H_{s+2}} \le C_t (\|Lu\|_{H_{s}}+  \|u\|_{H_t})$. In addition, we have that $\|u\|_{H_{s+2}} \le C\|Lu\|_{H_s}$.
\end{proposition}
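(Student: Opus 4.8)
The plan is to derive the result from the standard interior elliptic regularity estimates in $\RR^n$, localized to coordinate charts and then patched together using the compactness of $\sph^2$. The starting point is the observation that, in the local coordinates of \eqref{Ltensor}, $L$ is a second-order operator with $C^\infty$ coefficients whose principal symbol is $\sum_{i,j} a^{ij}(x)\xi_i\xi_j$; by the positive-definiteness \eqref{a_tensor_pos} this symbol is bounded below by $c_1\sum_{i,j} g^{ij}(x)\xi_i\xi_j$, so $L$ is uniformly elliptic on each chart. The local regularity theorems of \cite[pgs.\ 261--269]{Folland_book_1976}, which are valid for real Sobolev exponents, then apply: if $Lu\in H_s$ on an open set $\Omega$, then $u\in H_{s+2}$ locally, together with an interior a priori bound $\|u\|_{H_{s+2}(V)} \le C(\|Lu\|_{H_s(\Omega)} + \|u\|_{H_t(\Omega)})$ for $V\Subset\Omega$ and any $t$.

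Next I would globalize. Choose a finite atlas of charts covering $\sph^2$ (possible by compactness) and a subordinate smooth partition of unity $\{\psi_j\}$ with $\sum_j\psi_j \equiv 1$. Since $\supp{\psi_j}$ lies in a chart, the local theorem applies to $\psi_j u$ (in particular $u=\sum_j \psi_j u\in H_{s+2}$), and one writes $L(\psi_j u) = \psi_j\, Lu + [L,\psi_j]u$. The commutator $[L,\psi_j]$ is a first-order operator with smooth coefficients, hence bounded $H_{s+1}\to H_s$; summing the local estimates over the finitely many $j$ yields $\|u\|_{H_{s+2}} \le C(\|Lu\|_{H_s} + \|u\|_{H_{s+1}} + \|u\|_{H_t})$. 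The intermediate term $\|u\|_{H_{s+1}}$, being of order strictly below $s+2$, is then absorbed by the interpolation inequality $\|u\|_{H_{s+1}} \le \delta\|u\|_{H_{s+2}} + C_\delta\|u\|_{H_t}$ (valid for $t<s+1$, hence in particular for $t<s-1$), choosing $\delta$ small to move the $H_{s+2}$ term to the left. This establishes $u\in H_{s+2}$ with $\|u\|_{H_{s+2}} \le C_t(\|Lu\|_{H_s} + \|u\|_{H_t})$ for every $t<s-1$.

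For the second, cleaner estimate the plan is to remove the lower-order term by a standard compactness (Peetre) argument, the essential input being that $L$ is injective on $H_{s+2}$. Injectivity is immediate from Proposition~\ref{a_coercive}: if $u\in H_{s+2}\subset H_1$ and $Lu=0$, then $\|u\|_a^2=\langle u,u\rangle_a=0$, so $u=0$ by coercivity. Now suppose the bound $\|u\|_{H_{s+2}}\le C\|Lu\|_{H_s}$ fails; then there is a sequence $u_n$ with $\|u_n\|_{H_{s+2}}=1$ and $\|Lu_n\|_{H_s}\to 0$. Since the embedding $H_{s+2}\hookrightarrow H_t$ is compact for $t<s+2$ (Rellich on the compact manifold $\sph^2$), a subsequence converges in $H_t$; applying the a priori estimate to the differences $u_n-u_m$ shows $(u_n)$ is Cauchy in $H_{s+2}$, with limit $w$ satisfying $\|w\|_{H_{s+2}}=1$ and $Lw=0$. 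Injectivity forces $w=0$, a contradiction, which proves the estimate.

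The main obstacle is not any single step but the careful globalization in the fractional setting: one must verify that the interior estimates of \cite{Folland_book_1976} are available for real $s$, keep track of the orders in the commutators $[L,\psi_j]$, and justify absorbing the intermediate $H_{s+1}$ norm by interpolation. Once the a priori bound is in hand, the passage to $\|u\|_{H_{s+2}}\le C\|Lu\|_{H_s}$ is routine, resting entirely on the coercivity already recorded in Proposition~\ref{a_coercive} and the Rellich compactness of $\sph^2$.
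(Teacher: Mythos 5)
Your proposal is correct, and while the first half follows the paper's path, your derivation of the final estimate takes a genuinely different route. For the a priori bound $\|u\|_{H_{s+2}} \le C_t(\|Lu\|_{H_s}+\|u\|_{H_t})$ both you and the paper rely on the local elliptic theory of \cite[pgs.\ 261--269]{Folland_book_1976}; the paper merely cites those results and remarks that compactness of $\sph^2$ makes them global, whereas you spell out the finite atlas, the partition of unity, the first-order commutators $[L,\psi_j]$, and the absorption of the intermediate $H_{s+1}$ norm by interpolation --- the same approach, with the globalization made explicit. The real divergence is in passing to $\|u\|_{H_{s+2}}\le C\|Lu\|_{H_s}$. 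You run a Peetre-type compactness argument: injectivity of $L$ on $H_{s+2}$ (deduced from coercivity), Rellich compactness of $H_{s+2}\hookrightarrow H_t$, and a contradiction via a normalized sequence $u_n$ with $\|Lu_n\|_{H_s}\to 0$. The paper instead argues directly and quantitatively: from the weak form \eqref{weaktensor},
\[
\min(b)\,\|u\|_{L^2}^2 \le \|u\|_a^2 = \langle Lu,u\rangle_{L^2} \le \|Lu\|_{L^2}\,\|u\|_{L^2},
\]
so $\|u\|_{L^2}\le C\|Lu\|_{L^2}$ with an explicit constant, and substituting this into the first inequality (with the lower-order norm controlled by $\|u\|_{L^2}$) and using $\|Lu\|_{L^2}\le \|Lu\|_{H_s}$ finishes the proof. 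The paper's route is shorter and produces a constant with visible dependence on the data (essentially $1/b_1$), and it also sidesteps your need for the Cauchy-sequence bootstrap; your route is non-constructive but more robust, since it requires only that $L$ be injective on $H_{s+2}$ rather than the quantitative coercivity inequality --- it would survive, for instance, weakening the positivity assumption on $b$ to mere uniqueness of solutions. Both arguments ultimately trace the key input back to the coercivity recorded in Proposition~\ref{a_coercive}. One small bonus of your version: by working with a general $t<s-1$ throughout, you avoid the minor mismatch in the paper's proof, which sets $t=0$ even though the stated first inequality requires $t<s-1$ (harmless there, since for $s\le 1$ one can take $t<s-1\le 0$ and bound $\|u\|_{H_t}\le\|u\|_{L^2}$, but worth noting).
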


\begin{proof}
The assertions in \cite[Corollary 6.27 and Theorem 6.30]{Folland_book_1976} regarding regularity and the inequality $\|u\|_{H_{s+2}} \le C_t (\|Lu\|_{H_{s}}+  \|u\|_{H_t})$ are true for elliptic operators in general, and specifically for our $L$, which is strongly elliptic and has $C^\infty$ coefficients. To obtain the second inequality, start by setting $t=0$ in the first inequality. Also note that, from \eqref{weaktensor}, we have $\min(b) \|u\|_{L^2}^2 \le \|u\|_a^2  = \langle Lu,u \rangle_{L^2} \le \|Lu\|_{L^2}\|u\|_{L^2}$. Dividing by $\|u\|_{L^2}$, we obtain $\|u\|_{L^2} \le C\|Lu\|_{L^2}$, since $\min(b)>0$. Thus $\|u\|_{s+2} \le C\big( \|Lu\|_{H_s} + \|Lu\|_{L^2}\big)$. The inequality we want then follows from the observation that $\|Lu\|_{L^2}\le \|Lu\|_{H_s}$.
\end{proof}

We close this section with a corollary to the regularity result above. The corollary forms the basis of the ``Nitsche trick" \cite{Nitsche-71} that we will use later.

\begin{corollary}
\label{nitsche_prelim} Let $V$ be a closed subspace of $H_1$ and let $P_V$ be the orthogonal projection of $V$ onto $H_1$, relative to the inner product $\langle \cdot,\cdot \rangle_a$. If $u\in H_1$ and $Lw=u-P_V u$, then 
\begin{equation}
\label{nitsche_ineq}
\|u-P_V u\|_{L^2}^2 \le \|w-P_V w\|_{a}\|u-P_V u\|_{a}
\end{equation}

\begin{proof}
Since we have $Lw=u-P_V u$, the regularity result above implies that $w\in H_3$. Integrating by parts in $\langle w, u-P_V u \rangle_a$ yields $\langle w, u-P_V u \rangle_a = \langle Lw, u-P_V u\rangle_{L^2} = \langle u-P_V u, u-P_V u\rangle_{L^2}= \|u-P_V u\|_{L^2}^2$. Next, note that $P_V w$ is in V, and so $P_V w$ is orthogonal to $u-P_V u$, relative to $\langle \cdot,\cdot \rangle_a$. Consequently, $\langle w, u-P_V u \rangle_a = \langle w -P_V w, u-P_V u \rangle_a$.  It follows that $\|u-P_V u\|_{L^2}^2=  \langle w -P_V w, u-P_V u \rangle_a$. Applying Schwarz's inequality then yields \eqref{nitsche_ineq}.
\end{proof}
\end{corollary}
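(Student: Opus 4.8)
The plan is to carry out the classical \emph{Nitsche duality trick}. Writing $e := u - P_V u$ for the $a$-orthogonal Galerkin error, the whole argument reduces to establishing the single identity
\[
\|e\|_{L^2}^2 = \langle w - P_V w,\, e\rangle_a ,
\]
after which Schwarz's inequality for the inner product $\langle\cdot,\cdot\rangle_a$ gives $\|e\|_{L^2}^2 \le \|w-P_V w\|_a\,\|e\|_a$, which is exactly \eqref{nitsche_ineq}. So the real content is producing this identity.

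First I would secure enough regularity on the auxiliary function $w$ to integrate by parts. Since $e = u - P_V u \in H_1 \subset L^2$, Proposition~\ref{regularity} applied to $Lw = e$ (taking $s=0$, or indeed $s=1$) yields $w \in H_3$. This smoothness is precisely what licenses moving $L$ onto $w$ and pairing in $L^2$. Next I would compute $\langle w, e\rangle_a$: because $w$ is a strong solution of $Lw = e$ and $e \in H_1$ is an admissible test function, the weak formulation \eqref{weaktensor} (with $w$ in the first slot) gives $\langle w, e\rangle_a = \langle Lw, e\rangle_{L^2} = \langle e, e\rangle_{L^2} = \|e\|_{L^2}^2$. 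I would then invoke Galerkin orthogonality: $P_V w \in V$, and by the defining property of the $a$-orthogonal projection $P_V$, the error $e = u - P_V u$ is $a$-orthogonal to all of $V$; hence $\langle P_V w, e\rangle_a = 0$ and therefore $\langle w, e\rangle_a = \langle w - P_V w, e\rangle_a$. Chaining the two computations gives the displayed identity, and a final application of Schwarz's inequality completes the proof.

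The one step I expect to require genuine care — and hence the main obstacle — is the weak-strong identity $\langle w, e\rangle_a = \langle Lw, e\rangle_{L^2}$, i.e.\ the integration by parts itself. This is where the regularity $w \in H_3$ from Proposition~\ref{regularity} does the essential work: without knowing $w$ is a strong (not merely distributional) solution, one cannot legitimately transfer the operator $L$ onto $w$ and pair against $e$ in $L^2$. By contrast, the remaining ingredients are routine: the symmetry of $\langle\cdot,\cdot\rangle_a$, the orthogonality relation $\langle P_V w, e\rangle_a = 0$ coming straight from the definition of $P_V$, and the concluding Schwarz estimate all follow with no further analysis once the weak-strong identity is in hand.
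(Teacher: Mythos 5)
Your proposal is correct and follows essentially the same route as the paper's own proof: regularity of $w$, the weak--strong identity $\langle w, u-P_Vu\rangle_a = \langle Lw, u-P_Vu\rangle_{L^2}$, Galerkin orthogonality to replace $w$ by $w-P_Vw$, and Schwarz's inequality. One small correction: applying Proposition~\ref{regularity} with $s=0$ gives only $w\in H_2$; it is the choice $s=1$ (valid since $u-P_Vu\in H_1$) that yields $w\in H_3$ as in the paper, though either level of regularity suffices for the integration by parts.
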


\section{Galerkin Approximation for $Lu=f$}
\label{continuous_galerkin_approx}

\subsection{Error estimates}
\label{SBF_tau_est}

We will use spaces of spherical basis functions to obtain approximate solutions to $Lu=f$; specifically, the $V_{\phi,X}$'s  and the $V_{\phi,L,X}$'s defined earlier. Let $\phi$ be an SBF on $\sph^2$ that is  positive definite or conditionally positive definite. For $\tau>1$, we will make the assumption that the Fourier-Legendre coefficients of $\phi$ satisfy 
\begin{equation}
\label{tau_assumpt}
c(1+\lambda_\ell)^{-\tau} \le \hat \phi_\ell \le C(1+\lambda_\ell)^{-\tau}, \ \forall \ \ell\ge L+1,
\end{equation} 
where $c$ and $C$ are positive constants and $L$ is the highest order special harmonic reproduced by interpolation from $V_{\phi,L,X}$.  In later sections, when we will be concerned with the stability of discretizing the problem, we will restrict the SBFs to the thin-plate splines. For obtaining error estimates, this is unnecessary.  

Let $P_X := P_{V_X}$ be the orthogonal projection of $H_1$ onto the finite dimensional space $V_X$, in the $\langle \cdot,\cdot \rangle_a$ inner product, and let $I_X$ be the interpolation operator associated with $V_X$. Since $\|u-P_Xu\|_a= \min_{v\in V_X}\|u-v\|_a$, we have that 
\begin{equation}
\label{a_norm_bound}
\|u-P_Xu\|_a \le \|u-I_X u\|_a \le C\|u-I_X u\|_{H_1},
\end{equation}
where the last inequality follows from the equivalence of the norms $\|\cdot \|_a$ and $\| \cdot \|_{H_1}$. The same reasoning applies to the solution $w$ to $Lw=u-P_X u$, so $\|w-P_Xw\|_a \le C\|w-I_Xw \|_{H_1}$. Combining these estimates with the one from Corollary~\ref{nitsche_prelim} then yields this:
\begin{equation}
\label{L2_H1_estimate}
\|u-P_Xu\|_{L^2}^2 \le C \|w-I_X w\|_{H_1}\|u-I_X u\|_{H_1}.
\end{equation}

The regularity results in Proposition~\ref{regularity} imply that if $f\in H_s$ then the solution $u$ to $Lu=f$ is in $H_{s+2}$. Moreover, if $\tau >1$, then the projection $P_X u $ exists and is in $H_{\tau +\alpha}$, for any $\alpha < \tau-1$. Thus, $u-P_Xu$ belongs to $H_\sigma$, $\sigma := \min(s+2,\tau+\alpha)$. Applying the elliptic regularity result to $Lw=u-P_Xu$ then gives us $w\in H_{\sigma+2}$. 

We are mainly interested in the case of $\sph^2$ -- i.e., $n=2$. For that case, we have the following lemma, which will be needed to obtain error estimates.

\begin{lemma}
Let $n=2$. In the notation used above, 
\begin{equation}
\label{w_interp_u_interp_bound}
\|w-I_X w\|_{H_1} \le C h^2 \|u - I_Xu\|_{H_1}.
\end{equation}
\end{lemma}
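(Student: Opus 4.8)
The plan is to run the standard Aubin--Nitsche duality bookkeeping, extracting two extra powers of $h$ from the fact that the dual solution $w$ is two Sobolev orders smoother than the datum $u-P_Xu$, while paying only the $H_1$ interpolation error of $u$ on the right. First I would observe that $u-P_Xu\in H_1$ (both $u$ and $P_Xu\in V_X$ lie in $H_1$), so that Proposition~\ref{regularity} applied to $Lw=u-P_Xu$ with $s=1$ yields $w\in H_3$ together with the regularity bound $\|w\|_{H_3}\le C\|u-P_Xu\|_{H_1}$. This is where the gain in smoothness of the dual solution is first registered.

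The second step is the interpolation estimate $\|w-I_Xw\|_{H_1}\le C h^2\|w\|_{H_3}$. This is the SBF interpolation error bound with target smoothness $3$ and error measured in $H_1$, so the gain is $h^{3-1}=h^2$. For $\phi$ satisfying \eqref{tau_assumpt} this follows from the zeros--lemma/interpolation machinery of \cite{Narcowich-etal-07-1}; since $w$ may be smoother than the native space $H_\tau$, the relevant range is reached through the doubling trick of Theorem~\ref{cond_pos_def_case}. Chaining with the first step gives $\|w-I_Xw\|_{H_1}\le Ch^2\|u-P_Xu\|_{H_1}$.

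The final step converts $\|u-P_Xu\|_{H_1}$ into $\|u-I_Xu\|_{H_1}$. Since $P_Xu$ is the $\langle\cdot,\cdot\rangle_a$-orthogonal projection of $u$ onto $V_X$ and $I_Xu\in V_X$, the best-approximation inequality already recorded in \eqref{a_norm_bound} gives $\|u-P_Xu\|_a\le\|u-I_Xu\|_a\le C\|u-I_Xu\|_{H_1}$; the equivalence of $\|\cdot\|_a$ and $\|\cdot\|_{H_1}$ then yields $\|u-P_Xu\|_{H_1}\le C\|u-I_Xu\|_{H_1}$. Combining the three steps produces \eqref{w_interp_u_interp_bound}.

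The main obstacle is the interpolation estimate in the second step: I must ensure the parameters line up so that the rate is exactly $h^2$ in the $H_1$ norm for a function only known to lie in $H_3$. Concretely, measuring the error in $H_1$ requires $1\le\tau$, and reaching target smoothness $3$ with a native space $H_\tau$ that may have $\tau<3$ forces the use of the doubling-type estimate, whose admissible range ($3\le 2\tau$, i.e.\ $\tau\ge 3/2$ when $n=2$) must be checked; for the surface splines $\phi_m$ with $\tau=m\ge 2$ this is automatic. Everything else---the regularity bound and the projection/norm-equivalence step---is routine and is already packaged in Proposition~\ref{regularity} and \eqref{a_norm_bound}.
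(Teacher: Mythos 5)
Your proposal is correct and follows essentially the same route as the paper's own proof: elliptic regularity (Proposition~\ref{regularity}) applied to $Lw=u-P_Xu$ to get $w\in H_3$ with $\|w\|_{H_3}\le C\|u-P_Xu\|_{H_1}$, the doubling-trick interpolation estimate of Theorem~\ref{cond_pos_def_case} for $\|w-I_Xw\|_{H_1}\le Ch^2\|w\|_{H_3}$, and the best-approximation inequality \eqref{a_norm_bound} with the $\|\cdot\|_a$/$\|\cdot\|_{H_1}$ equivalence to replace $\|u-P_Xu\|$ by $\|u-I_Xu\|_{H_1}$. Your explicit check that the doubling-trick range $3\le 2\tau$ must hold (automatic for $\phi_m$, $\tau=m\ge 2$) is a point the paper glosses over, but it does not change the argument.
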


\begin{proof}
The solution to $Lw = u-P_Xu$ is in $w\in H_{\sigma+2}$, where $\sigma := \min(s+2,\tau+\alpha)$, $\alpha < \tau-n/2= \tau-1$, and $\tau>n/2=1$.  It follows that $\sigma >1$ and so $w$ is in $H_{\sigma+2} \subset H_3$. Applying  Theorem~\ref{cond_pos_def_case} then yields
\begin{equation}
\label{H_1_H_3_bound}
\|w-I_X w\|_{H_1} \le Ch^2 \|w\|_{H_3}.
\end{equation}
Furthermore, by Proposition~\ref{regularity}, $\|w\|_{H_3} \le C\|Lw\|_{H_1}=\|u-P_X u\|_{H_1}$. Since the usual norm for $H_1$ is equivalent to the $\|\cdot \|_a$ norm, we have $\|w\|_{H_3} \le C\|u-P_X u\|_a$. Then, by this inequality and \eqref{a_norm_bound}, we see that $\|w\|_{H_3} \le C\|u- I_Xu\|_{H_1}$. Combining this with \eqref{H_1_H_3_bound} gives us \eqref{w_interp_u_interp_bound}.
\end{proof}

\begin{theorem}
\label{u_L2_proj_bound_thm}
Let $n=2$ and $Lu=f$, $f\in H_s$, $s\ge 0$. In the notation used above, 
\begin{equation}
\label{u_L2_proj_bound}
\|u-P_X u\|_{L^2} \le \left\{\begin{array}{cl} C h^{s+2} \|u\|_{H_{s+2}}\le  C h^{s+2} \|f\|_{H_s}, & \text{if }\ s\le 2\tau -2, \\
C h^{2\tau} \|u\|_{H_{2\tau}} \le C h^{2\tau} \|f\|_{H_{2\tau-2}}, & \text{if }\ 2\tau -2 <s.
\end{array}
\right.
\end{equation}
\end{theorem}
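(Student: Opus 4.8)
The plan is to combine the $L^2$-$H_1$ estimate from equation \eqref{L2_H1_estimate} with the Nitsche-trick bound \eqref{w_interp_u_interp_bound}, and then to control the remaining factor $\|u-I_X u\|_{H_1}$ by a direct interpolation estimate. Substituting \eqref{w_interp_u_interp_bound} into \eqref{L2_H1_estimate} immediately gives
\[
\|u-P_X u\|_{L^2}^2 \le C h^2 \|u-I_X u\|_{H_1}^2,
\]
so that $\|u-P_X u\|_{L^2} \le C h \|u-I_X u\|_{H_1}$. The whole problem thus reduces to bounding $\|u-I_X u\|_{H_1}$ by an appropriate power of $h$ times a Sobolev norm of $u$.

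The key tool for the remaining step is the interpolation error estimate from Theorem~\ref{cond_pos_def_case} (the generalized doubling trick), applied with smoothness index $1$ on the target side. Here the case distinction in the statement arises naturally from the regularity of $u$. By Proposition~\ref{regularity}, $f\in H_s$ forces $u\in H_{s+2}$, but the interpolation operator $I_X$ built from $\phi$ can only exploit smoothness up to the ``doubled'' native-space order, which for an SBF satisfying \eqref{tau_assumpt} is $2\tau$. Thus I would split into the two regimes: when $s+2 \le 2\tau$ (i.e. $s\le 2\tau-2$), the solution's full smoothness $H_{s+2}$ is available to the interpolation bound, giving $\|u-I_X u\|_{H_1} \le C h^{s+1}\|u\|_{H_{s+2}}$, and hence $\|u-P_X u\|_{L^2}\le C h^{s+2}\|u\|_{H_{s+2}}$. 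When $s > 2\tau-2$, the interpolation rate saturates at the doubled order $2\tau$, yielding $\|u-I_X u\|_{H_1}\le C h^{2\tau-1}\|u\|_{H_{2\tau}}$ and therefore $\|u-P_X u\|_{L^2}\le C h^{2\tau}\|u\|_{H_{2\tau}}$. In each case the second inequality in \eqref{u_L2_proj_bound} follows from the elliptic regularity estimate $\|u\|_{H_{s+2}}\le C\|f\|_{H_s}$ of Proposition~\ref{regularity} (and its saturated analogue $\|u\|_{H_{2\tau}}\le C\|f\|_{H_{2\tau-2}}$).

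The step I expect to require the most care is verifying that Theorem~\ref{cond_pos_def_case} genuinely delivers the rate $h^{\min(s+2,2\tau)-1}$ for the $H_1$-interpolation error, across both the native-space-smoothness range and the doubled range, and in particular that its hypotheses (the relation between $\tau$, the target smoothness, and the source smoothness $1$) are met in each regime. One must check that the inequality $\tau>n/2=1$ together with $\alpha<\tau-1$ indeed places $u-P_X u$ in a space of order $\sigma>1$, so that the Nitsche-trick solution $w$ lands in $H_{\sigma+2}\subset H_3$ and the earlier lemma applies; this is precisely where the condition $\tau>1$ is used. The remaining manipulations are routine applications of norm equivalence between $\|\cdot\|_a$ and $\|\cdot\|_{H_1}$ (Proposition~\ref{a_coercive}) and of the quasi-uniformity implicit in writing $h$ for $h_X$, so the substantive content is entirely in correctly invoking the doubling-trick interpolation bound with the saturation at order $2\tau$.
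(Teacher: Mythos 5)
Your proposal is correct and follows essentially the same route as the paper: both proofs combine \eqref{L2_H1_estimate} with the Nitsche-trick bound \eqref{w_interp_u_interp_bound}, invoke Theorem~\ref{cond_pos_def_case} to obtain the two-regime interpolation estimate $\|u-I_X u\|_{H_1} \le C h^{\min(s+2,2\tau)-1}\|u\|_{H_{\min(s+2,2\tau)}}$, and finish with the elliptic regularity bound of Proposition~\ref{regularity}. The only difference is cosmetic ordering (you substitute the Nitsche bound first and then apply interpolation, while the paper states the interpolation bound first), so there is nothing substantive to add.
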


\begin{proof} By Theorem~\ref{cond_pos_def_case}, we have that
\begin{equation}
\label{u_H1_interp_bound}
\|u-I_X u\|_{H_1} \le \left\{\begin{array}{cl} C h^{s+1} \|u\|_{H_{s+2}} & \text{if }\ s\le 2\tau -2, \\
C h^{2\tau-1} \|u\|_{H_{2\tau}} & \text{if }\ 2\tau -2 <s.
\end{array}
\right.
\end{equation}
The estimate in terms of the Sobolev norms of $u$  then follows from \eqref{L2_H1_estimate} and \eqref{w_interp_u_interp_bound}.  Proposition~\ref{regularity} implies that $\|u\|_{s+2}\le C\|Lu\|_s$. Since $f=Lu$, $\|u\|_{s+2}\le C\| f \|_s$. When $s>2\tau - 2$, then the previous argument applies, with $s+2$ being replaced by $2\tau$.
\end{proof}

\subsection{The stiffness matrix in the Lagrange basis}
\label{A_lagrange_basis}

The error estimates obtained above are, as we noted, independent of the SBF used. However, to actually solve for the Galerkin approximation, we must pick a suitable $\phi$ for which there is a good basis for $V_{\phi,L,X}$, one that results in a numerically robust method for finding the the Galerkin solution. We will show that a surface spline $\phi_m$ and the corresponding Lagrange basis $\{\chi_\xi\colon \xi \in X\}$ will provide the required robustness.

Let $m\ge 2$ and set $V_{m,X}=V_{\phi_m,m-1,X}$. Take the basis for $V_{m,X}$ to be $\{\chi_\xi\colon \xi \in X\}$. The Galerkin approximation to the solution $Lu=f$ is the orthogonal projection $u_X:=P_{m,X}u$ of $u$ onto $V_{m,X}$, in the $\langle \cdot,\cdot\rangle_a$ inner product. If $u_X=\sum_{\xi\in X} \alpha_\xi \chi_\xi$, then, from  the weak form of $Lu=f$ and the usual normal equations, we obtain the \emph{stiffness matrix}:
\begin{equation}
\label{stiff_def}
A\alpha = \tilde f, \ \text{where} \ A_{\xi,\eta} = \langle \chi_\xi, \chi_\eta\rangle_a, \ \alpha = (\alpha_\xi), \ \tilde f=(\langle f, \chi_\xi\rangle_{L^2}).
\end{equation}
Eventually, we will discretize the problem by using quadrature methods to approximate $A$. For now, we will restrict our attention to $A$. 

\subsubsection{Stability of $A$}

We want to estimate $\kappa_2(A)$, the condition number for $A$. We begin with the observation that $A$ is a real, self-adjoint matrix. It is also a Gram matrix for the linearly independent set, $\{\chi_\xi\colon \xi \in X\}$, and is therefore positive definite as well. Consequently, $\kappa_2(A) = \frac{\lambda_{\max}(A)}{\lambda_{\min}(A)}$. 

We will begin by estimating $\lambda_{\min}(A)$. First of all, the operator $L$ is self adjoint and positive definite. Standard variational methods then imply that 
\begin{equation}
\label{min_eig_L}
\min_{v\in H_1}\langle v,v\rangle_a =\lambda_{\min}(L), \  \|v\|_{L^2}=1.
\end{equation}
Let $v=\sum_{\xi\in X} \alpha_\xi \chi_\xi$. Consider the quadratic form $\alpha^T A \alpha = \langle v,v\rangle_a$. By \eqref{min_eig_L}, we have that 
\[
\alpha^T A \alpha \ge \lambda_{\min}(L) \|v\|^2_{L^2} .
\]
Next, from \eqref{p_stability}, with $p=2$, we have $\|v\|_{L^2} = \|\sum_{\xi\in X} \alpha_\xi \chi_\xi\|_{L^2} \ge c_1 q_X \|\alpha\|_{\ell_2(X)}$. From the inequality above, we then have 
\[
\alpha^T A \alpha \ge \lambda_{\min}(L) \|v\|^2_{L^2} \ge c_1^2 q_X^2 \lambda_{\min}(L) \|\alpha\|^2_{\ell_2(X)}, 
\]
which holds for all $\alpha \in \RR^{|X|}$. Hence, we have that
\begin{equation}
\label{min_eig_A}
\lambda_{\min}(A) \ge c_1^2 q_X^2 \lambda_{\min}(L) .
\end{equation}

Estimating $\lambda_{\max}(A)$ requires the Bernstein inequality from \cite[Theorem~6.1]{mhaskar-etal-2010}. In the case at hand, the theorem cited\footnote{The precise version of the theorem holds for a positive definite SBF. However, it is easy to modify it so that it will hold for a conditionally positive definite SBF.} holds with $n=2$, $p=2$, $0<\gamma<2m-1$, and $g=v$. For every $v\in V_{m,X}$ and every $0<\gamma<2m -1$, we have that the is a constant $C>0$, where $C=C(m,\rho)$, such that
\begin{equation}
\label{bernstein}
\|g\|_{H_\gamma} \le Cq_X^{-\gamma} \|g\|_{L^2}.
\end{equation}
Since $\alpha^T A \alpha = \langle v,v\rangle_a$ and, by \eqref{a_tensor_norm}, $\langle v,v\rangle_a\le M_2 \|v\|_{H_1}^2$, then, from \eqref{bernstein} ($\gamma=1$) and  \eqref{p_stability} ($p=2,n=2$), we have that
$
\alpha^T A \alpha \le Cq_X^{-2} \|v\|_{L^2}^2 \le Cc_2\| \alpha \|^2_{\ell_2(X)}.
$
Consequently, 
\begin{equation}
\label{max_eig_A}
\lambda_{\max}(A) \le C, \ C=C(m,\rho,\| \cdot \|_a ).
\end{equation}
Combining \eqref{max_eig_A} and \eqref{min_eig_A} results in the following:

\begin{theorem}\label{stiffness_stability}
Let $A$ be the stiffness matrix in the basis $\{\chi_\xi\colon \xi\in X\}$ for $V_{m,X}$. If $q_X$ is sufficiently small and $\rho$ is fixed, then there is a constant $C=C(m,\rho,\| \cdot \|_a)$ for which the condition number $\kappa_2(A)$ satisfies $\kappa_2(A) \leq Cq_X^{-2}$.
\end{theorem}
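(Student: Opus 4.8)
The plan is to assemble the bound $\kappa_2(A) \le C q_X^{-2}$ directly from the two eigenvalue estimates \eqref{min_eig_A} and \eqref{max_eig_A} derived above. The first step is to observe that $A$ is real and symmetric, and, being the Gram matrix of the linearly independent Lagrange family $\{\chi_\xi \colon \xi \in X\}$ with respect to the inner product $\langle \cdot, \cdot \rangle_a$, it is positive definite. Hence its singular values coincide with its eigenvalues and $\kappa_2(A) = \lambda_{\max}(A)/\lambda_{\min}(A)$; the problem thus reduces to bounding $\lambda_{\min}(A)$ from below and $\lambda_{\max}(A)$ from above.

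For the lower bound I would write any $v = \sum_{\xi\in X} \alpha_\xi \chi_\xi \in V_{m,X}$ and use $\alpha^T A \alpha = \langle v, v\rangle_a$. The variational characterization \eqref{min_eig_L} gives $\langle v,v\rangle_a \ge \lambda_{\min}(L)\|v\|_{L^2}^2$, where $\lambda_{\min}(L)>0$ because $L$ is self-adjoint and positive definite (the coercivity in \eqref{a_tensor_norm} together with $b \ge b_1 > 0$ guarantees this). Invoking the lower half of the $L^2$-stability estimate \eqref{p_stability} with $p=2$, namely $\|v\|_{L^2}\ge c_1 q_X\|\alpha\|_{\ell_2(X)}$, then yields $\alpha^T A \alpha \ge c_1^2 q_X^2 \lambda_{\min}(L)\|\alpha\|_{\ell_2(X)}^2$ for all $\alpha$, which is exactly \eqref{min_eig_A}.

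For the upper bound I would again use $\alpha^T A \alpha = \langle v,v\rangle_a$ and bound it above by $M_2\|v\|_{H_1}^2$ via the boundedness in \eqref{a_tensor_norm}. The Bernstein inequality \eqref{bernstein} with $\gamma=1$ controls $\|v\|_{H_1} \le C q_X^{-1}\|v\|_{L^2}$, and the upper half of \eqref{p_stability} with $p=2$ controls $\|v\|_{L^2} \le c_2 q_X \|\alpha\|_{\ell_2(X)}$; combining these collapses the two factors of $q_X$ and gives $\alpha^T A \alpha \le C\|\alpha\|_{\ell_2(X)}^2$, i.e.\ \eqref{max_eig_A}. Dividing the two estimates produces $\kappa_2(A) \le C q_X^{-2}$ with $C=C(m,\rho,\|\cdot\|_a)$.

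I do not expect a genuine obstacle here, since the heavy lifting is already done in \eqref{p_stability} and \eqref{bernstein}. The only points requiring care are bookkeeping ones: ensuring $q_X$ (equivalently $h_X$, since $\rho$ is fixed) is small enough that the Lagrange-function bounds of Theorem~\ref{main} and the Bernstein inequality are in force, and checking that the crucial cancellation of $q_X$ in the upper bound against $q_X^{-1}$ from the Bernstein step leaves $\lambda_{\max}(A)$ genuinely $q_X$-independent, so that the entire $q_X^{-2}$ growth of the condition number comes solely from $\lambda_{\min}(A)$.
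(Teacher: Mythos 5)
Your proposal is correct and follows essentially the same route as the paper: the paper derives the lower bound $\lambda_{\min}(A)\ge c_1^2 q_X^2\lambda_{\min}(L)$ from the variational characterization \eqref{min_eig_L} and the $p=2$ stability estimate \eqref{p_stability}, and the upper bound $\lambda_{\max}(A)\le C$ from \eqref{a_tensor_norm}, the Bernstein inequality \eqref{bernstein} with $\gamma=1$, and \eqref{p_stability}, exactly as you do, then takes the quotient. Your bookkeeping remarks (smallness of $q_X$ so that Theorem~\ref{main} and the Bernstein inequality apply, and the cancellation of the $q_X$ factors in the upper bound) match the paper's implicit assumptions.
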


\subsubsection{Exponential decay of the entries of $A$}

At this point, we turn to the behavior of the entries $A_{\xi,\eta}$ in $A$. What we will see is that the entries in $A$ decay exponentially in $\dist(\xi,\eta)$, making $A$ nearly sparse. In section~\ref{sparse_app_local_lag}, we will use this decay to construct a sparse discretization for $A$. Establishing decay requires the following lemma.

\begin{lemma}
\label{schwarz_lemma_a_g}
Let $f,g$ be in $C^1(\sph^2)$, and let $\nabla f, \nabla g$ be their covariant derivatives. Then, there is a constant $C$ such that for all $x\in \sph^2$ we have
\begin{equation}
\label{a_inner_prod_g_norm}
\big|\sum_{i,j}a^{ij}(x) \nabla_i f (x)\nabla_j g(x)\big| \le C|\nabla f (x)| \,|\nabla g(x)|.
\end{equation}
\end{lemma}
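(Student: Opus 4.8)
The plan is to read the left-hand side as a pointwise symmetric bilinear form evaluated on two covectors, and then bound it by the Cauchy--Schwarz inequality for that very form, converting the result into the Riemannian norms via the comparison \eqref{a_tensor_pos}. Concretely, fix $x\in\sph^2$ and define, on the cotangent space at $x$, the bilinear form $a(u,v):=\sum_{i,j}a^{ij}(x)u_iv_j$. The quantity to be estimated is exactly $a(\nabla f(x),\nabla g(x))$, where $\nabla f(x)$ and $\nabla g(x)$ are regarded as covectors with components $\nabla_i f(x)$ and $\nabla_i g(x)$.

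First I would observe that $a(\cdot,\cdot)$ is a genuine inner product on the cotangent space. It is symmetric because the tensor $a^{ij}$ is symmetric, and it is positive definite because the left inequality in \eqref{a_tensor_pos} gives $a(v,v)\ge c_1\sum_{i,j}g^{ij}(x)v_iv_j>0$ for $v\ne 0$. Consequently the Cauchy--Schwarz inequality applies to $a(\cdot,\cdot)$, yielding
\[
|a(\nabla f(x),\nabla g(x))|\le a(\nabla f(x),\nabla f(x))^{1/2}\,a(\nabla g(x),\nabla g(x))^{1/2}.
\]

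Next I would invoke the right inequality in \eqref{a_tensor_pos} to pass from the $a$-form back to the metric: $a(v,v)\le c_2\sum_{i,j}g^{ij}(x)v_iv_j=c_2|v|^2$, where $|v|^2:=\sum_{i,j}g^{ij}(x)v_iv_j$ is the squared Riemannian length of the covector $v$. In particular $|\nabla f(x)|^2=\sum_{i,j}g^{ij}(x)\nabla_i f(x)\nabla_j f(x)$, and likewise for $g$. Substituting these two bounds into the Cauchy--Schwarz estimate gives $|a(\nabla f(x),\nabla g(x))|\le c_2|\nabla f(x)|\,|\nabla g(x)|$, which is \eqref{a_inner_prod_g_norm} with $C=c_2$; since $x$ was arbitrary, the bound holds uniformly.

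There is no serious obstacle here; the only point requiring care is the index bookkeeping, that is, confirming that with $v_i=\nabla_i f(x)$ the expression $\sum_{i,j}a^{ij}(x)v_iv_j$ appearing in \eqref{a_tensor_pos} coincides with $a(\nabla f(x),\nabla f(x))$ and that the comparison quantity $\sum_{i,j}g^{ij}(x)v_iv_j$ is exactly the squared norm $|\nabla f(x)|^2$. Because the coordinate matrices $[a^{ij}]$ and $[g^{ij}]$ can degenerate in a given chart (for instance at the poles in spherical coordinates), it is cleanest to run the entire argument invariantly on the cotangent space, where \eqref{a_tensor_pos} holds for all covectors and the Cauchy--Schwarz step is automatic; this sidesteps any chart-dependent issues.
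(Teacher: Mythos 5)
Your proof is correct and is essentially the same as the paper's: both apply the Cauchy--Schwarz inequality to the inner product defined pointwise by the positive definite tensor $a^{ij}$, then use the upper bound in \eqref{a_tensor_pos} to compare the resulting $a$-norms with the Riemannian norms $|\nabla f|$, $|\nabla g|$. Your version is in fact slightly more careful, since the paper's intermediate step silently drops the constant $c_2$ from \eqref{a_tensor_pos}, which you retain explicitly as the constant $C=c_2$.
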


\begin{proof}
The matrix $a^{ij}$ is positive definite, so may use it as an inner product. Schwarz's inequality applied to this inner product implies that 
\[
\big|\sum_{i,j}a^{ij}\nabla_i f \nabla_j h \big| \le\big (\sum_{i,j}a^{ij}\nabla_i f\nabla_j f\big)^{1/2}(\sum_{i,j}a^{ij}\nabla_i  g\nabla_j g\big)^{1/2}
\]
By \eqref{a_tensor_pos}, we have that $\sum_{i,j}a^{ij}\nabla_i f\nabla_j f\le \sum_{i,j}g^{ij}\nabla_i f\nabla_j f=|\nabla f|^2$. This also holds for $g$ as well. Applying these inequalities  to the previous one yields \eqref{a_inner_prod_g_norm}.
\end{proof}

\begin{proposition} 
\label{siff_decay_A}
For $h_X$ sufficiently small, 
\begin{equation}
\label{stiff_exp_decay}
|A_{\xi,\eta}| \le Ch_X^{-2}e^{-\frac{\nu}{h_X}\,\dist(\xi,\eta)}.
\end{equation}

\end{proposition}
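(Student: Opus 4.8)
The plan is to estimate the matrix entry $A_{\xi,\eta} = \langle \chi_\xi,\chi_\eta\rangle_a$ directly from the integral representation of the bilinear form. Writing out the definition from \eqref{weaktensor}, we have
\[
A_{\xi,\eta} = \int_{\sph^2}\bigg(\sum_{i,j} a^{ij}\,\nabla_i \chi_\xi\,\nabla_j\chi_\eta + b\,\chi_\xi\chi_\eta\bigg)\,d\mu.
\]
The strategy is to bound the integrand pointwise by a product of exponentials centered at $\xi$ and at $\eta$, then integrate. For the first term, Lemma~\ref{schwarz_lemma_a_g} gives $\big|\sum_{i,j}a^{ij}\nabla_i\chi_\xi\nabla_j\chi_\eta\big| \le C|\nabla\chi_\xi|\,|\nabla\chi_\eta|$, and then the gradient bound \eqref{grad_bnd_chi_xi} from Theorem~\ref{grad_estimates} yields $|\nabla\chi_\xi(x)|\,|\nabla\chi_\eta(x)| \le Cq_X^{-2}\,e^{-\frac{\nu}{h_X}(\d(x,\xi)+\d(x,\eta))}$. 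For the second term, the $L^\infty$ decay \eqref{lagrange_decay} gives $|b(x)\chi_\xi(x)\chi_\eta(x)| \le C\|b\|_{L^\infty}e^{-\frac{\nu}{h_X}(\d(x,\xi)+\d(x,\eta))}$. Since $q_X^{-2}\sim h_X^{-2}$ dominates the bounded factor for small $h_X$, the integrand is controlled by $Ch_X^{-2}e^{-\frac{\nu}{h_X}(\d(x,\xi)+\d(x,\eta))}$.

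The remaining task is to integrate this pointwise bound against $d\mu$ and extract the factor $e^{-\frac{\nu}{h_X}\d(\xi,\eta)}$. The key geometric observation is the triangle inequality $\d(x,\xi)+\d(x,\eta)\ge \d(\xi,\eta)$, but using it crudely throws away all the decay in $x$ that is needed to make the integral finite. The right move is to split the exponent: since for every $x$ we have $\d(x,\xi)+\d(x,\eta) \ge \tfrac12\d(\xi,\eta) + \tfrac12\big(\d(x,\xi)+\d(x,\eta)\big)$, we factor
\[
e^{-\frac{\nu}{h_X}(\d(x,\xi)+\d(x,\eta))} \le e^{-\frac{\nu}{2h_X}\d(\xi,\eta)}\;e^{-\frac{\nu}{2h_X}\d(x,\xi)}.
\]
Then $\int_{\sph^2}e^{-\frac{\nu}{2h_X}\d(x,\xi)}\,d\mu(x)$ is a convergent integral of order $h_X^2$ (the continuous analogue of the banded-sum estimate \eqref{basic_sum}: integrating over geodesic annuli of radius $\sim r$ and width $dr$ gives $\int_0^\pi r\,e^{-\frac{\nu}{2h_X}r}\,dr \sim h_X^2$). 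This produces an overall $h_X^2$, which cancels the $h_X^{-2}$ from the gradient bound, leaving the clean estimate $|A_{\xi,\eta}| \le Ch_X^{-2}e^{-\frac{\nu}{h_X}\d(\xi,\eta)}$ after absorbing the harmless halving of $\nu$ into the constant and the decay rate (one may relabel $\nu/2$ as $\nu$, or simply note the stated $\nu$ is the post-halving rate).

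I expect the main subtlety to be bookkeeping the exponential rate rather than any deep difficulty: one must be careful that splitting off $e^{-\frac{\nu}{2h_X}\d(\xi,\eta)}$ halves the effective decay constant, so the $\nu$ appearing in the conclusion is really $\nu/2$ of the $\nu$ in Theorem~\ref{main}; this is cosmetic since all constants depend only on $m$ and $\rho$. A secondary point requiring care is justifying that the $h_X^{-2}$ prefactor is correct — it comes entirely from the $|\nabla\chi_\xi||\nabla\chi_\eta| \le Cq_X^{-2}e^{\cdots}$ term (the $\chi_\xi\chi_\eta$ term contributes only $O(1)$ after the $h_X^2$ from integration, hence is lower order), together with $q_X\sim h_X$ for quasi-uniform $X$. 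Everything else is a routine combination of the pointwise decay estimates already established in Theorem~\ref{main}, Theorem~\ref{grad_estimates}, and Lemma~\ref{schwarz_lemma_a_g}.
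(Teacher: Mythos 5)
Your proof is correct, and its first half --- bounding the integrand via Lemma~\ref{schwarz_lemma_a_g}, the gradient bound \eqref{grad_bnd_chi_xi}, and the Lagrange decay \eqref{lagrange_decay} --- is exactly the paper's argument. Where you diverge is the final integration step, and your stated reason for diverging rests on a misconception: you claim the crude triangle inequality ``throws away all the decay in $x$ that is needed to make the integral finite,'' but the integration is over $\sph^2$, a compact manifold of total measure $4\pi$, so no decay in $x$ is needed for finiteness. The paper does precisely the crude thing: it uses $\d(x,\xi)+\d(x,\eta)\ge\d(\xi,\eta)$ to bound the integrand \emph{uniformly in $x$} by $Ch_X^{-2}e^{-\frac{\nu}{h_X}\d(\xi,\eta)}$ and then integrates, picking up only a factor of the sphere's area; this preserves the original $\nu$ from Theorem~\ref{main} and yields the stated bound immediately. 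Your split-exponent variant is nonetheless valid: the inequality $\d(x,\xi)+\d(x,\eta)\ge\tfrac12\d(\xi,\eta)+\tfrac12\d(x,\xi)$ is a correct consequence of the triangle inequality, and the polar-coordinate estimate $\int_{\sph^2}e^{-\frac{\nu}{2h_X}\d(x,\xi)}\,d\mu(x)\sim h_X^2$ is right. It even buys you something the paper's proof does not --- the $h_X^2$ cancels the $q_X^{-2}\sim h_X^{-2}$ from the gradient term, so your final bound is $Ce^{-\frac{\nu}{2h_X}\d(\xi,\eta)}$ with \emph{no} $h_X^{-2}$ prefactor --- but at the cost of halving the decay rate to $\nu/2$. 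You flag that halving correctly, and since $\nu$ enters later results (e.g., the choice $K\nu>2M$ in Theorem~\ref{error_l2_norm_A}) only as a generic positive constant depending on $m$ and $\rho$, the relabeling is harmless. In short: correct proof, same ingredients, but the extra work in the last step is unnecessary on a compact manifold; its one genuine payoff is showing that the $h_X^{-2}$ prefactor in \eqref{stiff_exp_decay} is not sharp.
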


\begin{proof}
Since we have $A_{\xi,\eta} = \langle \chi_\xi,\chi_\eta \rangle_a$, we have that 
\[
|A_{\xi,\eta}| \le \int_{\sph^2} \big|\sum_{i,j}a^{ij}\nabla_i \chi_\xi \nabla_j \chi_\eta + b(x)  \chi_\xi \chi_\eta\big| d\mu(x).
\]
By  Lemma~\ref{schwarz_lemma_a_g} and the boundedness of $b$, we have that 
\[
\big|\sum_{i,j}a^{ij}\nabla_i \chi_\xi \nabla_j \chi_\eta + b(x)  \chi_\xi \chi_\eta\big| \le C|\nabla \chi_\xi | |\nabla \chi_\eta | + \|b\|_{L^\infty} |\chi_\xi |\,|\chi_\eta |.
\]
Moreover, using \eqref{grad_bnd_chi_xi} and \eqref{lagrange_decay}, we see that 
\[
C|\nabla \chi_\xi | \, |\nabla \chi_\eta | + \|b\|_{L^\infty} |\chi_\xi |\,|\chi_\eta | \le (C_1q_X^{-2}+C_2)\exp\left(-\nu\frac{\d(x,\xi) +\d(x,\eta )}{h_X}\right).
\]
Because $\dist(\cdot,\cdot)$ is a metric on $\sph^2$, we may use the triangle inequality: $\d(x,\xi) +\d(x,\eta )\ge \d(\xi,\eta)$. In addition, $h_X$ small implies that $q_X =h_X/\rho_X$ is also small. Combining these facts yields, uniformly in $x$, this inequality:
\[
\big| \sum_{i,j}a^{ij}\nabla_i (\chi_\xi) \nabla_j (\chi_\eta) + b(x)  \chi_\xi \chi_\eta\big| \le Ch_X^{-2} e^{-\frac{\nu}{h_X} \d(\xi,\eta)} .
\]
Integrating both sides above then establishes \eqref{stiff_exp_decay}. 
\end{proof}

\section{The Discretized Galerkin Solution}
\label{discretized_galerkin_solution}

The discretized solution to the Galerkin problem is obtained simply by replacing the stiffness matrix $A$ from the original problem with a discretized version, which is obtained via quadrature, and then solving as usual. In order to carry out a complete analysis of this method, in the sequel we will restrict the tensor $a^{ij}$ to have the form $a^{ij} = a\,g^{ij}$, where $a\in C^\infty(\sph^2)$.

\subsection{Discretizing the stiffness matrix}

We now turn to the task of discretizing the stiffness matrix. Our approach is to approximate the $(\xi,\eta)$ entry $A_{\xi,\eta}= \int_{\sph^2}\big(a\nabla \chi_\xi\cdot \nabla \chi_\eta+ b\chi_\xi\chi_\eta\big)d\mu$ by means of the  quadrature formulas discussed in section~\ref{quadrature_formulas}.  In doing so, we will allow for the surface spline used in the Galerkin method, $\phi_m$, to differ from the one used in the quadrature formula. We will denote the latter by $\phi_M$, with $M\ge 2$.

\subsubsection{Discretization error for the stiffness matrix}

To discretize the stiffness matrix, we will employ a set of nodes $Y$ that is chosen independently of $X$. In general, $Y$ will be much larger than $X$ and need not contain $X$ as a subset. That said, the discretization of $ A_{\xi,\eta}=\int_{\sph^2}\big(a\nabla \chi_\xi\cdot \nabla \chi_\eta+ b\chi_\xi\chi_\eta\big)d\mu$  is  $Q_Y(a\nabla\chi_\xi\cdot \nabla \chi_\eta+b\chi_\xi\chi_\eta)$. In explicit form, this is given by 
\begin{equation}
\label{discretized_entry}
A^Y_{\xi,\eta}:= \sum_{\zeta \in Y} \big(a\nabla\chi_\xi\cdot \nabla \chi_\eta+b\chi_\xi\chi_\eta \big)\big|_\zeta \,w_\zeta.
\end{equation}
We will need the following lemma to obtain bounds on the error $|A_{\xi,\eta}- A^Y_{\xi,\eta}|$:
\begin{lemma}
\label{optimize_mu}
Suppose that $m > M$ are positive integers and that $\epsilon>0$. Let $\sigma=m-M$. If $M\ge \sigma+1$ and $\epsilon<2\sigma-1$, then $\epsilon<m-2$ and $2m - 1 - \epsilon >2M$.
\end{lemma}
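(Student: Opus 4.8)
The plan is to treat this as a purely arithmetic claim and to eliminate $m$ everywhere in favor of the two free parameters $M$ and $\sigma$ by means of the defining relation $m = M + \sigma$. Because $m > M$ are integers, $\sigma = m - M$ is a positive integer, so $2\sigma - 1 \ge 1 > 0$ and the hypothesis $\epsilon < 2\sigma - 1$ is a genuine (nonvacuous) constraint compatible with $\epsilon > 0$. After the substitution $m = M + \sigma$, both desired inequalities reduce to chaining the hypothesis $\epsilon < 2\sigma - 1$ with a single integer comparison, so no estimation beyond elementary algebra is needed.

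For the first conclusion, $\epsilon < m - 2$, I would exhibit the chain $\epsilon < 2\sigma - 1 \le m - 2$. The left inequality is exactly the hypothesis. The right inequality $2\sigma - 1 \le m - 2$ becomes $2\sigma - 1 \le (M + \sigma) - 2$ after inserting $m = M + \sigma$; cancelling $\sigma$ and rearranging shows this is precisely $M \ge \sigma + 1$, the remaining hypothesis. Hence $\epsilon < m - 2$.

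For the second conclusion, $2m - 1 - \epsilon > 2M$, I would insert the bound $\epsilon < 2\sigma - 1$ directly, giving $2m - 1 - \epsilon > 2m - 1 - (2\sigma - 1) = 2m - 2\sigma = 2(m - \sigma)$, and $m - \sigma = M$ by definition of $\sigma$. This yields $2m - 1 - \epsilon > 2M$. I would point out that this half consumes only $\epsilon < 2\sigma - 1$, whereas the first half is where the hypothesis $M \ge \sigma + 1$ is actually used; the two hypotheses thus feed the two conclusions more or less separately.

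There is essentially no obstacle here beyond careful bookkeeping. The real content of the lemma is a consistency check: the two constraints imposed elsewhere, namely $M \ge \sigma + 1$ and $\epsilon < 2\sigma - 1$, are exactly strong enough to force the two Sobolev-exponent inequalities $\epsilon < m - 2$ and $2m - 1 - \epsilon > 2M$ that are needed to make the subsequent quadrature-error estimate go through. The only point to watch is the strictness of the inequalities and the fact that $\sigma$ is a positive integer, which keeps every step meaningful.
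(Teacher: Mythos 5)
Your proof is correct and follows essentially the same route as the paper's: both conclusions are obtained by the chain $\epsilon < 2\sigma-1 \le m-2$ (the second inequality being a restatement of $M \ge \sigma+1$ after substituting $m = M+\sigma$) and by inserting $\epsilon < 2\sigma-1$ into $2m-1-\epsilon = 2M + 2\sigma - 1 - \epsilon$. Your observation that the two hypotheses feed the two conclusions separately is an accurate reading of the structure, and nothing is missing.
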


\begin{proof} 
We will begin by showing show the first inequality. By assumption, $\epsilon <2\sigma-1$, so $\epsilon <\sigma +\sigma+1-2\le \sigma+M-2=m-2$. To get the second, note that $2m-\epsilon -1 =2M+ 2\sigma -1 -\epsilon$. Since $2\sigma-1-\epsilon>0$, we have that $2m-\epsilon -1 >2M$.
\end{proof}

\begin{corollary}
\label{quad_diff_stiff}
Suppose that  $a(x),b(x) \in C^\infty(\sph^2)$, and adopt the notation and assumptions from Lemma~\ref{optimize_mu}. In addition, let $\{\chi_\xi \colon \xi \in X\}$ and $\{\tilde \chi_\xi \colon \xi \in Y\}$ be the Lagrange bases for $V_{m,X} := V_{\phi_m,X}$ and for $V_{M,Y} := V_{\phi_M,Y}$, respectively. Then, if $\delta:=2\sigma -1 - \epsilon>0$, we have:
\begin{equation}
\label{quad_diff_stiff_bnd}
\big| A_{\xi,\eta}- A^Y_{\xi,\eta}\big| \le 
C(h_Y/h_X)^{2M} h_X^{-\delta},\ C=C(\|a\|_{H_{2m}},\|b\|_{H_{2m}}),
\end{equation}
where $A^Y_{\xi,\eta} = Q_Y(a\nabla\chi_\xi\cdot \nabla \chi_\eta+b\chi_\xi\chi_\eta)$. Moreover, $\delta\le m-2-\epsilon$, so it may be made as small as we wish by taking $\epsilon$ close to $m-2$.  
\end{corollary}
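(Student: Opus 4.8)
The plan is to bound the quadrature error $|A_{\xi,\eta} - A^Y_{\xi,\eta}|$ by applying the Sobolev-space quadrature estimate \eqref{quad_sobolev_est} to the integrand $F := a\nabla\chi_\xi\cdot\nabla\chi_\eta + b\chi_\xi\chi_\eta$. Since the quadrature uses $\phi_M$, its native space has smoothness parameter $\tau = M$, so \eqref{quad_sobolev_est} is valid for any target smoothness $\mu$ with $M < \mu \le 2M$ (and $\mu > 1$), giving a rate $h_Y^\mu$. First I would identify the natural smoothness of $F$: by Proposition~\ref{sobolev_norm_chi_xi_chi_eta}, both pieces live in $H_{2m-1-\epsilon}$, since $0<\epsilon<2m-3$ holds (this follows from $\epsilon < 2\sigma-1 \le m-2 < 2m-3$ for $m\ge 2$). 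The key role of Lemma~\ref{optimize_mu} is to guarantee the compatibility inequality $2m-1-\epsilon > 2M$, so that I may legitimately choose the quadrature target smoothness to be $\mu = 2M$, the top of the admissible range for $\phi_M$.

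With $\mu = 2M$ fixed, the estimate \eqref{quad_sobolev_est} yields
\begin{equation}
\label{quad_err_step}
\big| A_{\xi,\eta} - A^Y_{\xi,\eta}\big| \le C h_Y^{2M}\, \|F\|_{H_{2M}}.
\end{equation}
The remaining task is to bound $\|F\|_{H_{2M}}$ in terms of $h_X$. Here I would invoke the two bounds of Proposition~\ref{sobolev_norm_chi_xi_chi_eta}. Because $2M \le 2m-1-\epsilon$, I can use the monotonicity of Sobolev norms (the inclusion $H_{2m-1-\epsilon} \hookrightarrow H_{2M}$ with norm bounded by $1$) to pass from the $H_{2m-1-\epsilon}$ estimates \eqref{sobolev_bnd_chi_xi_chi_eta} and \eqref{sobolev_bnd_grad_dot_grad} to $H_{2M}$ estimates. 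The dominant (most singular in $h_X$) contribution comes from the gradient term, which by \eqref{sobolev_bnd_grad_dot_grad} satisfies $\|a\nabla\chi_\xi\cdot\nabla\chi_\eta\|_{H_{2M}} \le \|a\nabla\chi_\xi\cdot\nabla\chi_\eta\|_{H_{2m-2-\epsilon}} \le C h_X^{1+\epsilon-2m}\|a\|_{H_{2m}}$, while the lower-order term gives $C h_X^{2+\epsilon-2m}\|b\|_{H_{2m}}$, which is smaller. Thus $\|F\|_{H_{2M}} \le C h_X^{1+\epsilon-2m}$, with $C$ depending on $\|a\|_{H_{2m}}$ and $\|b\|_{H_{2m}}$.

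Combining this with \eqref{quad_err_step} gives $|A_{\xi,\eta}-A^Y_{\xi,\eta}| \le C h_Y^{2M} h_X^{1+\epsilon-2m}$. To match the stated form, I rewrite the $h_X$ exponent: since $2m = 2M + 2\sigma$, we have $1+\epsilon-2m = -2M - (2\sigma - 1 - \epsilon) = -2M - \delta$, so the bound becomes $C h_Y^{2M} h_X^{-2M} h_X^{-\delta} = C(h_Y/h_X)^{2M} h_X^{-\delta}$, which is exactly \eqref{quad_diff_stiff_bnd}. The final claim $\delta \le m-2-\epsilon$ follows from $\delta = 2\sigma-1-\epsilon$ and $\sigma = m-M \le m-(\sigma+1)$, i.e.\ $2\sigma \le m-1$, hence $\delta = 2\sigma-1-\epsilon \le m-2-\epsilon$, which can be driven to zero by letting $\epsilon \to m-2$.

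The main obstacle I anticipate is purely bookkeeping around the exponents: one must carefully verify that the chosen $\mu=2M$ falls strictly inside the quadrature-admissible window $(M, 2M]$ \emph{and} below the regularity ceiling $2m-1-\epsilon$ of the integrand, so that both \eqref{quad_sobolev_est} and Proposition~\ref{sobolev_norm_chi_xi_chi_eta} apply simultaneously — this is precisely what Lemma~\ref{optimize_mu} is designed to secure. There is no deep analytic difficulty; the substance is already packaged in the earlier Sobolev bounds and the doubling-trick quadrature estimate, and the present result is essentially the correct interleaving of those exponents.
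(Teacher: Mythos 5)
Your proposal is correct and takes essentially the same route as the paper: Lemma~\ref{optimize_mu} to secure $2m-1-\epsilon>2M$, the quadrature estimate \eqref{quad_sobolev_est} with $\tau=M$ and $\mu=2M$, Sobolev-norm monotonicity to pass to the $H_{2m-1-\epsilon}$ and $H_{2m-2-\epsilon}$ bounds of Proposition~\ref{sobolev_norm_chi_xi_chi_eta}, and the same exponent arithmetic and derivation of $\delta\le m-2-\epsilon$. The only cosmetic difference is that the paper estimates the two pieces of the integrand as separate integrals (absorbing the extra factor $h_X$ in the scalar term via $h_X<\pi$), whereas you bound the single integrand $F$ and observe that the gradient term dominates.
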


\begin{proof}
By Proposition~\ref{sobolev_norm_chi_xi_chi_eta}, $b\chi_\xi\chi_\eta$ is in $H_{2m-1-\epsilon}$ for all $0<\epsilon <m-2$. By Lemma~\ref{optimize_mu}, if $M\ge \sigma+1$ and $\epsilon<2\sigma -1$, then we have both $2m-1-\epsilon >2M$ and $\epsilon<m-2$. It follows that we may use \eqref{quad_sobolev_est} with $\tau=M$ and $\mu = 2M<2m-1-\epsilon=2M+2\sigma-1-\epsilon$. From the bound in \eqref{sobolev_bnd_chi_xi_chi_eta}, and from  $ \|b\chi_\xi\chi_\eta\|_{H_{2M}} \le \|b\chi_\xi\chi_\eta\|_{H_{2m-1-\epsilon}}$, we see that 
\begin{align}
\bigg| \int_{\sph^2} b\chi_\xi\chi_\eta d\mu - Q_Y(b\chi_\xi\chi_\eta)\bigg| &\le 
C  h_Y^{2M} \|b\chi_\xi\chi_\eta\|_{H_{2m-1-\epsilon}}  \nonumber \\ 
&\le C h_Y^{2M} h_X^{1+\epsilon -(2\sigma-1)-2M}\|b\|_{H_{2m}} \nonumber \\
&\le C(h_Y/h_X)^{2M} h_X^{1-\delta}\|b\|_{H_{2m}}. 
\label{LS_scalar_est}
\end{align}
Using the same argument, but with the bounds from \eqref{sobolev_bnd_grad_dot_grad} instead of \eqref{sobolev_bnd_chi_xi_chi_eta}, we have 
\begin{equation}
\bigg| \int_{\sph^2} a\nabla\chi_\xi\cdot \nabla \chi_\eta d\mu - Q_Y(a\nabla\chi_\xi\cdot \nabla \chi_\eta)\bigg| \le 
C(h_Y/h_X)^{2M} h_X^{-\delta}\|a\|_{H_{2m}}.  \label{LS_vector_est}
\end{equation}
If we combine  \eqref{LS_scalar_est} and \eqref{LS_vector_est} and note that $h_X<\pi$, we obtain  \eqref{quad_diff_stiff_bnd}. To prove the statement concerning $\delta$, observe that $\delta= \sigma -2 -\epsilon + (\sigma +1)\le m-M-2+\epsilon +M=m-2 - \epsilon$. 
\end{proof}

\begin{remark}\em If $M\ge m$ and $0<\epsilon<m-2$ , then, obviously, $1<m+1<2m-\epsilon-1 <2M$. Thus, we may use \eqref{quad_sobolev_est} with $\tau=M$ and $\mu = 2m-1-\epsilon$. The same arguments employed above then imply that the error estimate in \eqref{quad_diff_stiff_bnd} becomes
\begin{equation}
\label{quad_diff_stiff_bnd_M=m}
\big| A_{\xi,\eta}- A^Y_{\xi,\eta}\big|\le C\max(\|a\|_{H_{2m}},\|b\|_{H_{2m}}) (h_Y/h_X)^{2m-1-\epsilon}.
\end{equation}
The right side in this inequality depends on $m$ and $\epsilon$, but not on $M$. It follows that there is no advantage in choosing $M>m$.\em
\end{remark}

One important fact is that $A^Y_{\xi,\eta}$ decays in $\d(\xi,\eta)$ in the same way as  $A_{\xi,\eta}$. We will establish this below. Before carrying out the proof, we mention that, although we use the assumption \eqref{lower_bnd_wgt} in our proof, it is not necessary to do so. We also wish to point out that the denominator of the fraction in the exponent is $h_X$, and not $h_Y$, as one would first suppose.

\begin{proposition}
\label{decay_AY}
The discretized entry $A^Y_{\xi,\eta}$ satisfies the bound $\big| A^Y_{\xi,\eta}\big|  \le Ch_X^{-2}\exp(-\frac{\nu}{h_X}\d(\xi,\eta))$.
\end{proposition}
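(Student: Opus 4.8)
The plan is to bound the discretized entry $A^Y_{\xi,\eta}$ directly from its defining sum \eqref{discretized_entry}, treating the two pieces of the integrand separately and exploiting the pointwise exponential decay of the Lagrange functions and their gradients, exactly as was done for the continuous entry in Proposition~\ref{siff_decay_A}. First I would apply the upper bound \eqref{upper_bnd_wgt} on the quadrature weights, $|w_\zeta| \le Ch_Y^2$, to obtain
\[
\big| A^Y_{\xi,\eta}\big| \le Ch_Y^2 \sum_{\zeta \in Y} \big| a\nabla\chi_\xi\cdot \nabla \chi_\eta + b\chi_\xi\chi_\eta \big|\big|_\zeta.
\]
Then, just as in Proposition~\ref{siff_decay_A}, I would use Lemma~\ref{schwarz_lemma_a_g} together with the boundedness of $a$ and $b$, the gradient decay \eqref{grad_bnd_chi_xi}, and the Lagrange decay \eqref{lagrange_decay}, to bound the integrand pointwise at each node $\zeta$ by $(C_1 q_X^{-2} + C_2)\exp\big(-\frac{\nu}{h_X}(\d(\zeta,\xi)+\d(\zeta,\eta))\big)$.

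The key step is then to factor the exponential using the triangle inequality in a way that separates the fixed distance $\d(\xi,\eta)$ from the summation variable $\zeta$. I would split the exponent symmetrically: writing $\d(\zeta,\xi)+\d(\zeta,\eta) \ge \tfrac12\d(\xi,\eta) + \tfrac12\big(\d(\zeta,\xi)+\d(\zeta,\eta)\big)$, one pulls out the factor $e^{-\frac{\nu}{2h_X}\d(\xi,\eta)}$ and is left with a sum over $Y$ of $e^{-\frac{\nu}{2h_X}(\d(\zeta,\xi)+\d(\zeta,\eta))}$. Dropping the $\eta$-term in this residual exponential (it is at most $1$), this sum is dominated by $\sum_{\zeta\in Y} e^{-\frac{\nu}{2h_X}\d(\zeta,\xi)}$, which I would control using the basic sum estimate \eqref{basic_sum}. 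The subtle point—flagged in the remark preceding the statement—is that the relevant length scale in the exponent is $h_X$, not $h_Y$: the decay rate $\nu/h_X$ is inherited from the Lagrange functions $\chi_\xi,\chi_\eta$ built over $X$, and has nothing to do with the quadrature node set $Y$. However, \eqref{basic_sum} is stated for a sum over $X$ with rate $\nu/h_X$, whereas here we are summing over $Y$. I expect this mismatch to be the main obstacle, and I would resolve it by applying the band-counting argument underlying \eqref{basic_sum} directly to $Y$: a geodesic band of width $\sim h_X$ and outer radius $\sim nh_X$ has area $\sim nh_X^2$ and hence contains $\sim nh_X^2/q_Y^2$ points of $Y$, so the sum over $Y$ is $\le C(h_X^2/q_Y^2)\sum_n n\, e^{-(n-1)\nu/2}$, a convergent geometric-type series.

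Collecting the factors, the bound reads
\[
\big| A^Y_{\xi,\eta}\big| \le Ch_Y^2\,(C_1 q_X^{-2}+C_2)\,\frac{h_X^2}{q_Y^2}\,e^{-\frac{\nu}{2h_X}\d(\xi,\eta)}.
\]
Since $Y$ is quasi-uniform with $q_Y \sim h_Y$, the factor $h_Y^2/q_Y^2$ is bounded by a constant depending only on $\rho_Y$, and the dominant surviving factor is $q_X^{-2} \sim h_X^{-2}$. Absorbing constants and replacing $\nu$ by $\nu/2$ (harmlessly, since the statement only asserts \emph{some} positive decay rate $\nu$), this yields $\big| A^Y_{\xi,\eta}\big| \le Ch_X^{-2}\exp\big(-\frac{\nu}{h_X}\d(\xi,\eta)\big)$, as claimed. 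The lower bound \eqref{lower_bnd_wgt} on the weights, mentioned in the paragraph before the statement, plays no role in this argument; only the upper bound \eqref{upper_bnd_wgt} is needed, which is consistent with the authors' remark that the assumption is not necessary here.
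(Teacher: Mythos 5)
Your proof is correct, but it takes a genuinely different route from the paper's, which is much shorter. The paper derives the same pointwise estimate on the summand that you do, but then applies the \emph{full} triangle inequality $\d(\zeta,\xi)+\d(\zeta,\eta)\ge \d(\xi,\eta)$ to bound the integrand by $Ch_X^{-2}e^{-\frac{\nu}{h_X}\d(\xi,\eta)}$ \emph{uniformly} in $\zeta$; the entire exponential then factors out of the quadrature sum, and all that remains is $\sum_{\zeta\in Y}|w_\zeta|$, which under the positivity assumption \eqref{lower_bnd_wgt} equals $\sum_{\zeta\in Y}w_\zeta=4\pi$, since the rule integrates constants exactly. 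That keeps the decay rate $\nu$ of Theorem~\ref{main} intact and takes two lines. Your argument instead splits the exponent, retains half the decay in $\zeta$, and controls $\sum_{\zeta\in Y}e^{-\frac{\nu}{2h_X}\d(\zeta,\xi)}$ by re-running the band-counting argument behind \eqref{basic_sum} with bands of width $h_X$ and point density $q_Y^{-2}$ --- a correct adaptation, and one that buys you independence from the positivity assumption, using only \eqref{upper_bnd_wgt}, exactly as the paper's preceding remark says is possible. The costs are two. First, the rate degrades to $\nu/2$; this is harmless downstream (in Theorem~\ref{error_l2_norm_A} and the sparsification results, $K$ is always chosen relative to whatever rate is available), but strictly you prove the statement with a different exponent constant than the $\nu$ used consistently elsewhere, whereas the paper's proof preserves it. Second, a bookkeeping slip: your collected factors satisfy $h_Y^2\,q_X^{-2}\,h_X^2/q_Y^2=\rho_X^2\rho_Y^2$, a constant --- not $h_X^{-2}$ as you assert --- so your method in fact yields the \emph{sharper} prefactor $C$ in place of $Ch_X^{-2}$ (at rate $\nu/2$); since $C\le Ch_X^{-2}$ for small $h_X$, the claimed bound still follows, so nothing breaks. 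Incidentally, if you want to avoid positivity while keeping the rate $\nu$, you can combine the paper's uniform-in-$\zeta$ bound with the crude count $\sum_{\zeta\in Y}|w_\zeta|\le N_Y\cdot Ch_Y^2\le C\rho_Y^2$, which dispenses with band-counting altogether.
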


\begin{proof}
The same argument used to establish \eqref{stiff_exp_decay} yields $|A^Y_{\xi,\eta}| \le Ch_X^{-2}\big(\sum_{\zeta\in Y}|w_\zeta|  \big) e^{-\frac{\nu}{h_X}\d(\xi,\eta)}$. By our assumption that the weights are positive, we have that $\sum_{\zeta\in Y}|w_\zeta |  = \sum_{\zeta\in Y} w_\zeta =4\pi$, from which the result is immediate.
\end{proof}

The theorem below gives us the desired bound on the error $\|A-A^Y \|_2$ that is made in using quadrature to compute the entries in the stiffness matrix.  
\begin{theorem}
\label{error_l2_norm_A}
Let $m> M\ge 2$ and let $\phi_M$ be the surface spline used for quadrature. If  $0<\delta < 2(m-M) -1$, then, for $h_X$ and $h_Y$ sufficiently small, we have
\begin{equation}
\label{error_l2_bound}
\|A-A^Y\|_2 \le C(\log(h_Y))^2 (h_Y/h_X)^{2M} h_X^{-\delta}
\end{equation}
\end{theorem}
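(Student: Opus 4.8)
The plan is to convert the entrywise bound from Corollary~\ref{quad_diff_stiff} into a bound on the spectral norm $\|A-A^Y\|_2$. Set $E:=A-A^Y$, a real symmetric matrix. For symmetric matrices the spectral norm is bounded by any induced $\ell_p$ operator norm; the most convenient is the $\ell_\infty \to \ell_\infty$ norm, which for a symmetric matrix equals the $\ell_1\to\ell_1$ norm and bounds $\|E\|_2$ via $\|E\|_2 \le \sqrt{\|E\|_1\|E\|_\infty} = \|E\|_\infty = \max_\xi \sum_\eta |E_{\xi,\eta}|$. So the whole problem reduces to bounding the maximum absolute row sum of $E$.

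Next I would combine the two ingredients available for a single entry. Corollary~\ref{quad_diff_stiff} gives the \emph{size} bound $|E_{\xi,\eta}| = |A_{\xi,\eta}-A^Y_{\xi,\eta}| \le C(h_Y/h_X)^{2M}h_X^{-\delta}$, while Proposition~\ref{siff_decay_A} and Proposition~\ref{decay_AY} give the \emph{decay} bounds $|A_{\xi,\eta}|,|A^Y_{\xi,\eta}| \le Ch_X^{-2}e^{-\frac{\nu}{h_X}\d(\xi,\eta)}$, hence by the triangle inequality $|E_{\xi,\eta}| \le Ch_X^{-2}e^{-\frac{\nu}{h_X}\d(\xi,\eta)}$ as well. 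The key idea is to use the size bound for nearby nodes and the exponential-decay bound for distant nodes, splitting the row sum at a radius $r_0$ chosen to balance the two contributions. Fixing $\xi$, I would write
\[
\sum_{\eta\in X} |E_{\xi,\eta}| = \sum_{\eta\in X\cap B(\xi,r_0)} |E_{\xi,\eta}| + \sum_{\eta\in X\cap B(\xi,r_0)^\complement} |E_{\xi,\eta}|.
\]
For the near term I apply the size bound $C(h_Y/h_X)^{2M}h_X^{-\delta}$ to each entry and count the nodes: a ball of radius $r_0$ contains $\calo(r_0^2/q_X^2)$ nodes by the quasi-uniformity of $X$. For the far term I apply the decay bound and invoke the truncated-sum estimate \eqref{truncated_sum}, which controls $\sum_{\eta\in X\cap B(\xi,r_0)^\complement} e^{-\frac{\nu}{h_X}\d(\xi,\eta)}$ by $C\rho_X^2\, n_0 e^{-(n_0-1)\nu}/(1-e^{-\nu})^2$ with $n_0=\lceil r_0/h_X\rceil$.

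The main obstacle, and the step requiring genuine care, is the choice of $r_0$ so that the far (exponentially decaying) term is at worst of the same order as the near term, while keeping the node count in the near term under control. I would choose $r_0 \sim h_X\log(1/h_Y)$ — equivalently $n_0\sim \log(1/h_Y) = -\log(h_Y)$ — so that $e^{-(n_0-1)\nu}$ becomes a positive power of $h_Y$ large enough to dominate the prefactor $h_X^{-2}$ and render the far term negligible compared to the near term. With this choice the near ball contains $\calo(r_0^2/q_X^2) = \calo(\rho_X^2 (\log h_Y)^2)$ nodes, and multiplying this count by the per-entry size bound $C(h_Y/h_X)^{2M}h_X^{-\delta}$ produces exactly the factor $(\log(h_Y))^2 (h_Y/h_X)^{2M}h_X^{-\delta}$ appearing in \eqref{error_l2_bound}; the $\rho_X^2$ is absorbed into $C$ since $\rho$ is fixed. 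I must verify that with $r_0\sim h_X|\log h_Y|$ the far term is indeed subdominant: since $n_0\sim|\log h_Y|$, the factor $e^{-(n_0-1)\nu}$ behaves like $h_Y^{\,c\nu}$ for some $c>0$, and for $h_Y$ sufficiently small this beats $h_X^{-2}$ times any fixed negative power of $h_X$, so that term contributes no worse than the near term. Collecting the near-term bound and applying $\|E\|_2\le \|E\|_\infty$ then gives \eqref{error_l2_bound}, completing the proof.
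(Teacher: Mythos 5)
Your proposal follows the paper's proof essentially step for step: reduce $\|A-A^Y\|_2$ to the maximum absolute row sum by symmetry, split each row sum at a radius $r_0$, bound the near part by the entrywise quadrature estimate \eqref{quad_diff_stiff_bnd} times the node count, bound the far part by the exponential-decay estimates of Propositions~\ref{siff_decay_A} and~\ref{decay_AY} together with \eqref{truncated_sum}, and take $r_0\sim h_X|\log h_Y|$ (including the crucial point that the logarithm involves $h_Y$, not $h_X$).

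The one place you are looser than the paper is the verification that the far term is subdominant, and as literally written your justification does not suffice. With $r_0=Kh_X|\log h_Y|$ the far term is of order $h_X^{-2}h_Y^{K\nu}|\log h_Y|$, and this is \emph{not} dominated by the near term for an arbitrary constant $K>0$: if $K\nu<2M$, then fixing $h_X$ and letting $h_Y\to 0$ gives
\[
\frac{\text{far}}{\text{near}} \sim \frac{h_Y^{K\nu-2M}\,h_X^{2M+\delta-2}}{|\log h_Y|} \longrightarrow \infty,
\]
so the phrase ``for $h_Y$ sufficiently small this beats $h_X^{-2}$ times any fixed negative power of $h_X$'' cannot be made uniform in the two independent parameters. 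The fix is exactly the paper's: choose $K$ so that $K\nu>2M$, after which $h_X^{-2}h_Y^{K\nu}\le h_Y^{2M}h_X^{-2M}\le (h_Y/h_X)^{2M}$ and $|\log h_Y|\le (\log h_Y)^2 h_X^{-\delta}$ hold for all sufficiently small $h_X,h_Y$, making the far contribution at most a constant times the near one. With that quantitative condition on $K$ inserted, your argument is complete and identical in structure to the paper's.
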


\begin{proof}
Recall that, for a self-adjoint matrix $C$, $\|C\|_2 \le \|C\|_1 = \|C\|_\infty$. Applying this to the self-adjoint matrix $A-A^Y$ yields
\[
\|A-A^Y\|_2 \le \max_{\eta\in X} \big(\textstyle{\sum_{\xi\in X}} |A_{\xi,\eta} - A^Y_{\xi,\eta}|\big).
\]
The task is now to bound the sums on the right above. Let $r_0>h_x$ and let $B(\eta,r_0)$ be the ball with center $\eta$ and radius $r_0$. We may break up the sum over $X$ into a sum over centers inside $B(\eta,r_0)$ and those in $B(\eta,r_0)$: 
\[
\sum_{\xi\in X} |A_{\xi,\eta} - A^Y_{\xi,\eta}| = \sum_{\xi\in X\cap B(\eta,r_0)} |A_{\xi,\eta} - A^Y_{\xi,\eta}| + \sum_{\xi\in X\cap B(\eta,r_0)^\complement} |A_{\xi,\eta} - A^Y_{\xi,\eta}|. 
\]
By Propositions \ref{siff_decay_A} and \ref{decay_AY}, we have that 
$
\big| A_{\xi,\eta} - A^Y_{\xi,\eta} \big| \le Ch_X^{-2}e^{-\frac{\nu}{h_X}\d(\xi,\eta)}.
$
Using this and \eqref{truncated_sum} yields 
\begin{equation}
\label{contribution_dist_xi}
\sum_{\xi\in X\cap B(\eta,r_0)^\complement} |A_{\xi,\eta} - A^Y_{\xi,\eta}| \le Ch_X^{-2}\sum_{\xi\in X\cap B(\eta,r_0)^\complement}e^{-\frac{\nu}{h_X}\d(\xi,\eta)} < C \rho_X^2h_X^{-2}\, \frac{ e^{-(n_0-1)\nu}n_0 }{(1 - e^{-\nu})^2},\ n_0=\lceil r_0/h_X\rceil .
\end{equation}
The set of remaining centers  is $X\cap B(\eta,r_0)$, whose cardinality may be bounded by $\mathrm{vol}( B(\eta,r_0))/\mathrm{vol}( B(\eta,q_X))\sim r_0^2/q_x^2=\rho_X^2 (r_0/h_X)^2<\rho_X^2 n_0^2 $. From this fact and the uniform estimate on $|A_{\xi,\eta}-A^Y_{\xi,\eta}|$ in \eqref{quad_diff_stiff_bnd}, we see that
\begin{equation}
\label{contribution_near_xi}
\sum_{\xi \in X\cap B(\eta,r_0)}\big| A_{\xi,\eta} - A^Y_{\xi,\eta} \big| \le C\rho_X^2 n_0^2 (h_Y/h_X)^{2M} h_X^{-\delta}.
\end{equation}
Choose a constant $K$ so that $K\nu>2M$ and pick $r_0=Kh_X|\log(h_Y)|$ so that $n_0\sim K |\log(h_Y)|$. (The $h_Y$ is \emph{not} a mistake.) The bounds  in \eqref{contribution_dist_xi} and \eqref{quad_diff_stiff_bnd} are then $Ch_X^{-2}h_Y^{K\nu} |\log (h_Y)|$ and $C(\log(h_Y))^2  (h_Y/h_X)^{2M} h_X^{-\delta}$, respectively. Note that $h_X^{-2} h_Y^{K\nu} \le h_Y^{2M} h_X^{-2M} \le (h_Y/h_X)^{2M}$. Since $h_Y$ and $h_X$ are small and $\delta>0$, we also have that $|\log(h_Y)|\le (\log(h_Y))^2h_X^{-\delta}$. Combining the various bounds above results in $\sum_{\xi \in X}\big| A_{\xi,\eta} - A^Y_{\xi,\eta} \big| \le C(\log(h_Y))^2 (h_Y/h_X)^{2M} h_X^{-\delta}$. This holding uniformly in $\eta$ immediately implies \eqref{error_l2_bound}.
\end{proof}

\subsubsection{Stability of the discretized stiffness matrix}

We now turn to the question of how stable, numerically,  the discretized stiffness matrix $A^Y$ is.  Answering this question requires the following lemma, which relates certain $L^2$ norms. We will need the two lemmas below.

\begin{lemma}
\label{L2_X_Y_bnds}
Let $X$, $Y$ be quasi-uniform, with $\rho_X,\rho_Y\le \rho$ and suppose that $u\in V_X$ and $I_Yu$ is the interpolant of $u$ relative to the kernel $\phi_m$, $m\ge 2$ and its associated space $V_Y$. Then, there exists a constant $C(\rho)$ such that $\frac12 \|u\|_{L^2}\le \| I_Yu\|_{L^2}\le \frac32 \|u\|_{L^2}$, provided $h_Y \le C(\rho)q_X$. 
\end{lemma}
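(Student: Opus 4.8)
The plan is to reduce the two-sided bound to a single interpolation-error estimate. Writing $I_Y u = u - (u - I_Y u)$, the triangle inequality gives
\[
\big| \|I_Y u\|_{L^2} - \|u\|_{L^2}\big| \le \|u - I_Y u\|_{L^2},
\]
so it suffices to show $\|u - I_Y u\|_{L^2} \le \tfrac12 \|u\|_{L^2}$ once $h_Y \le C(\rho) q_X$. Indeed, this immediately yields $\|I_Y u\|_{L^2} \le \tfrac32 \|u\|_{L^2}$, while $\|u\|_{L^2} \le \|I_Y u\|_{L^2} + \tfrac12\|u\|_{L^2}$ gives $\tfrac12\|u\|_{L^2} \le \|I_Y u\|_{L^2}$. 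Before estimating, I would note that since $u \in V_X \subset H_{2m-1-\epsilon}$ with $2m - 1 - \epsilon > 1 = n/2$, the Sobolev embedding theorem makes $u$ continuous, so $I_Y u$ is well defined.

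The interpolation error is controlled by trading smoothness for powers of $h_Y$ and $q_X$. Since $u \in V_X \subset H_m$ (because $2m-1-\epsilon \ge m$ for $m\ge 2$ and $\epsilon$ small), I would apply Theorem~\ref{cond_pos_def_case} to the interpolation operator $I_Y$ built from $\phi_m$, with $\tau = m$, $\mu = m$ and $\beta = 0$, obtaining
\[
\|u - I_Y u\|_{L^2} \le C h_Y^{m} \|u\|_{H_m}.
\]
Next, because $u$ lies in the finite-dimensional space $V_{m,X}$, the Bernstein inequality \eqref{bernstein} with $\gamma = m$ (permissible since $m < 2m-1$ for $m \ge 2$) converts the $H_m$ norm back to an $L^2$ norm at the cost of a negative power of $q_X$:
\[
\|u\|_{H_m} \le C q_X^{-m} \|u\|_{L^2}.
\]
Combining the two displays yields $\|u - I_Y u\|_{L^2} \le C(h_Y/q_X)^{m}\|u\|_{L^2}$, with $C = C(\rho,m)$ uniform because $\rho_X, \rho_Y \le \rho$.

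To finish, I would choose $C(\rho)$ so that $h_Y \le C(\rho) q_X$ forces $C(h_Y/q_X)^m \le \tfrac12$, which completes the argument. The main obstacle is purely one of bookkeeping: lining up the two estimates so the powers of $h_Y$ and $q_X$ combine into the single small ratio $h_Y/q_X$. The gain $h_Y^m$ from interpolating a native-space function must exactly offset the loss $q_X^{-m}$ from the inverse (Bernstein) estimate on $V_{m,X}$; this balance is precisely what dictates the hypothesis $h_Y \le C(\rho) q_X$ rather than a smallness condition on $h_Y$ alone, and it is why quasi-uniformity (control of $\rho_X,\rho_Y$ by $\rho$) is needed to keep all constants uniform.
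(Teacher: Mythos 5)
Your proof follows essentially the same route as the paper's: reduce via the triangle inequality to showing $\|u - I_Y u\|_{L^2} \le \tfrac12\|u\|_{L^2}$, bound that interpolation error by a higher-order Sobolev norm of $u$, convert back to $L^2$ with the Bernstein inequality on $V_X$, and choose $C(\rho)$ so the resulting ratio $(h_Y/q_X)$ is small enough. The only real difference is the smoothness order at which the two estimates are matched: you work at order $m$ (via Theorem~\ref{cond_pos_def_case} and \eqref{bernstein} with $\gamma=m$, which is why you need $V_X\subset H_m$, i.e.\ the same kernel $\phi_m$ generating $V_X$), whereas the paper works at order $2$ (citing an $L^2$ interpolation estimate of order $h_Y^2$ together with Bernstein at $\gamma=2$), a choice that is insensitive to which surface spline generates $V_X$; both versions suffice, since only smallness of the constant is needed.
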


\begin{proof}
Let $\tilde u = I_Yu$. Since $\|u\|_{L^2} - \|u- \tilde u\|_{L^2}\le \|\tilde u\|_{L^2}\le \|u\|_{L^2} +\|u- \tilde u\|_{L^2}$, we need only find $C(\rho)$ such that $\|u- \tilde u\|_{L^2}\le \frac12 \|u\|_{L^2}$. From \cite[Theorem~4.6]{FHNWW2013}, we have that
\[
\|u- \tilde u\|_{L^2}=\|u- I_Yu\|_{L^2} \le C_1h_Y^2 \|u\|_{W_2^2},
\]
since $u\in W_2^3\subset  W_2^2$ and $m\ge \mu =  2$.  The constant $C_1$ only depends on $Y$ through $\rho$. We now apply the Bernstein inequality\footnote{The theorem actually requires the SBFs involved to be strictly positive definite. However, by a simple adaptation of the argument used to prove \cite[Theorem~4.6]{FHNWW2013}, one can establish the result needed here.}  from \cite[Theorem~6.1]{mhaskar-etal-2010}. First, $u\in V_X$, the space associated with $\phi_3$. This is essentially the Green's function associated with $\beta=6$ in the Bernstein inequality. In addition, we may take $\gamma=2$. Thus, we have
\[
\|u\|_{W_2^2}\approx \|u\|_{H_2}  \le C_2q_X^{-2}\|u\|_{L^2}, \text{where }  C_2=C_2(\rho).
\]
Combining this with the previous inequality yields $\|u- \tilde u\|_{L^2} \le C_1C_2(h_Y/q_X)^2\|u\|_{L^2}$. The result immediately follows on choosing $C(\rho) \le (2C_1C_2)^{-1/2}$.
\end{proof}

\begin{theorem}
\label{norm_inv_A_Y}
Let $a,b\in C^\infty$ satisfy $a(x)\ge a_0>0$ and $b(x)\ge b_0>0$. Then $\lambda_{\rm min}(A^Y) \ge Cq_X^2$, provided $h_Y \le C(\rho)q_X$.
\end{theorem}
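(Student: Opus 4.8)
The plan is to lower-bound $\lambda_{\min}(A^Y)$ by relating the quadratic form $\alpha^T A^Y \alpha$ to a genuine $L^2$ norm of $v := \sum_{\xi \in X} \alpha_\xi \chi_\xi$, mimicking the continuous argument that produced \eqref{min_eig_A}, but with the integral replaced by the quadrature sum. First I would write out
\[
\alpha^T A^Y \alpha = \sum_{\zeta \in Y} \big(a\,|\nabla v(\zeta)|^2 + b\,v(\zeta)^2\big) w_\zeta,
\]
using bilinearity of the quadrature functional and the fact that $A^Y_{\xi,\eta}=Q_Y(a\nabla\chi_\xi\cdot\nabla\chi_\eta + b\chi_\xi\chi_\eta)$; here I would also need Lemma~\ref{schwarz_lemma_a_g}'s companion lower bound, i.e. the left inequality in \eqref{a_tensor_pos}, but since we have reduced to $a^{ij}=a\,g^{ij}$ the integrand is simply $a|\nabla v|^2 + bv^2 \ge b_0 v^2$ pointwise. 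Dropping the nonnegative gradient term and using $w_\zeta > 0$ together with the lower bound \eqref{lower_bnd_wgt}, $w_\zeta \ge C h_Y^2$, gives
\[
\alpha^T A^Y \alpha \ge b_0 \sum_{\zeta \in Y} v(\zeta)^2\, w_\zeta \ge C b_0 h_Y^2 \sum_{\zeta \in Y} v(\zeta)^2 .
\]

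Next I would recognize $\sum_{\zeta\in Y} v(\zeta)^2 h_Y^2$ as essentially a Riemann-sum/discrete $L^2$ norm of $v$ over the node set $Y$, and convert it to the true $\|v\|_{L^2}^2$. The clean way to do this is via Lemma~\ref{L2_X_Y_bnds}: the interpolant $I_Y v$ satisfies $\|I_Y v\|_{L^2}\ge \tfrac12\|v\|_{L^2}$ whenever $h_Y \le C(\rho)q_X$, which is exactly the hypothesis of the theorem. Since $v\in V_X$ is built from $\phi_m$ with $m\ge 2$, Lemma~\ref{L2_X_Y_bnds} applies (taking the ambient $V_X$ to be the $\phi_m$-space; the statement is phrased for $m\ge 2$). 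I would then invoke the $L^2$-stability bound \eqref{p_stability} for the $Y$-indexed Lagrange basis $\{\tilde\chi_\zeta\}$: writing $I_Y v = \sum_{\zeta\in Y} v(\zeta)\tilde\chi_\zeta$, the left inequality in \eqref{p_stability} gives $\|I_Y v\|_{L^2} \ge c_1 q_Y \|(v(\zeta))_\zeta\|_{\ell_2(Y)}$, hence $\sum_{\zeta\in Y} v(\zeta)^2 \le C q_Y^{-2}\|I_Y v\|_{L^2}^2$. Combining with the reverse direction of Lemma~\ref{L2_X_Y_bnds} provides a two-sided comparison; chaining these yields $h_Y^2 \sum_\zeta v(\zeta)^2 \ge C\rho^{-2}\|v\|_{L^2}^2$.

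Finally I would close the loop with the continuous $L^2$-stability of the $X$-basis. Applying the left inequality of \eqref{p_stability} (with $p=2$) to $v=\sum_{\xi\in X}\alpha_\xi\chi_\xi$ gives $\|v\|_{L^2}\ge c_1 q_X\|\alpha\|_{\ell_2(X)}$, so $\|v\|_{L^2}^2 \ge c_1^2 q_X^2\|\alpha\|_{\ell_2(X)}^2$. Stringing the three inequalities together produces
\[
\alpha^T A^Y \alpha \ge C b_0\, q_X^2 \|\alpha\|_{\ell_2(X)}^2,
\]
for all $\alpha$, whence $\lambda_{\min}(A^Y)\ge C q_X^2$, as claimed. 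The main obstacle I anticipate is the middle step: carefully passing from the \emph{quadrature sum} $\sum_\zeta v(\zeta)^2 w_\zeta$ to a \emph{genuine} $L^2$ norm. One must be sure the comparison $h_Y^2\sum_\zeta v(\zeta)^2 \sim \|v\|_{L^2}^2$ is valid uniformly, and this is precisely why both the weight bound \eqref{lower_bnd_wgt} and the constraint $h_Y\le C(\rho)q_X$ enter — the latter guarantees $Y$ samples $v\in V_X$ finely enough that nodal values control the $L^2$ norm, via Lemma~\ref{L2_X_Y_bnds} and the Bernstein inequality embedded in its proof. Getting the constants to depend only on $\rho$ (and $a_0,b_0$), not on $h_X,h_Y$ separately, is the delicate bookkeeping here.
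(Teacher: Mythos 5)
Your overall strategy is exactly the paper's: expand $\alpha^T A^Y\alpha$ as the quadrature sum, drop the nonnegative gradient term, invoke the weight lower bound \eqref{lower_bnd_wgt}, pass from nodal values on $Y$ to $\|v\|_{L^2}$ via the interpolant $I_Y v$ and Lemma~\ref{L2_X_Y_bnds}, and finish with $L^2$-stability of the $X$-basis. The role you assign to the hypothesis $h_Y\le C(\rho)q_X$ and the final chaining match the paper's proof step for step.

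However, there is a direction error in your middle step --- precisely the step you flagged as delicate. You need a \emph{lower} bound on $\sum_{\zeta\in Y} v(\zeta)^2$ in terms of $\|I_Y v\|_{L^2}^2$, but you invoke the \emph{left} inequality of \eqref{p_stability}, which reads $c_1 q_Y\|(v(\zeta))_\zeta\|_{\ell_2(Y)} \le \|I_Y v\|_{L^2}$ and therefore yields the \emph{upper} bound $\sum_{\zeta} v(\zeta)^2 \le C q_Y^{-2}\|I_Y v\|_{L^2}^2$; combining that with the reverse direction of Lemma~\ref{L2_X_Y_bnds} (which bounds $\|I_Yv\|_{L^2}$ above by $\tfrac32\|v\|_{L^2}$) still only produces upper bounds on the nodal sum, so the chain as written does not deliver the claimed estimate $h_Y^2\sum_\zeta v(\zeta)^2 \ge C\rho^{-2}\|v\|_{L^2}^2$. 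What you actually need is the \emph{right} inequality of \eqref{p_stability} applied to the $Y$-basis, $\|I_Y v\|_{L^2} \le c_2 q_Y \|(v(\zeta))_\zeta\|_{\ell_2(Y)}$, which gives
\[
\sum_{\zeta\in Y} v(\zeta)^2 \;\ge\; c_2^{-2} q_Y^{-2}\,\|I_Y v\|_{L^2}^2 \;\ge\; \tfrac14\, c_2^{-2} q_Y^{-2}\,\|v\|_{L^2}^2,
\]
where the last step uses the lower estimate $\|I_Yv\|_{L^2}\ge\tfrac12\|v\|_{L^2}$ of Lemma~\ref{L2_X_Y_bnds}. Then $h_Y^2\sum_\zeta v(\zeta)^2 \ge \tfrac14 c_2^{-2}(h_Y/q_Y)^2\|v\|_{L^2}^2 \ge \tfrac14 c_2^{-2}\|v\|_{L^2}^2$, since $h_Y/q_Y=\rho_Y\ge 1$. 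This is exactly how the paper proceeds: its citation of nodal-norm estimates from the literature, $\|u|_Y\|_{\ell^2}\ge Cq_Y^{-1}\|I_Yu\|_{L^2}$, is the upper-bound side of the stability inequality in disguise. With this one-line correction the rest of your argument goes through unchanged.
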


\begin{proof}
Let $u\in V_X$, and so,  $u=\sum_{\xi \in X} u(\xi)\chi_\xi$ and $\nabla u = \sum_{\xi\in X} u(\xi )\nabla \chi_\xi$. Moreover, $u|_X$ is an arbitrary vector in $\RR^{N_X}$. It follows that 
\[
\begin{aligned}
(u|_X)^TA^Yu|_X &= \sum_{\zeta \in Y} (a(\zeta) \nabla u(\zeta)\cdot \nabla u(\zeta) + b(\zeta) u(\zeta)^2)w_\zeta \\
&\ge b_0 \sum_{\zeta \in Y}u(\zeta)^2 w_\zeta \ge Cb_0N_Y^{-1} \sum_{\zeta \in Y}u(\zeta)^2 \ \text{(by \eqref{lower_bnd_wgt})} \\
&\ge Cb_0h_Y^2 \|u|_Y\|_{\ell^2}^2.
\end{aligned}
\]
Let $\tilde u = I_Yu = \sum_{\zeta\in Y} u(\zeta)\tilde \chi_\zeta$, which is the interpolant of $u$ relative to $V_Y$, the space associated with $\phi_m$ and $Y$. Of course, since $\tilde u$ is the interpolant of $u$ on $Y$, $\tilde u|_Y  = u|_Y$. By Corollary~3.11, (3.1) and (3.3) in \cite{HNSW_2_2011},  $\|u|_Y\|_{\ell^2} = \| \tilde u|_Y\|_{\ell^2} \ge Cq_Y^{-1} \| \tilde u\|_{L^2}$. In addition, Lemma~\ref{L2_X_Y_bnds} implies that $\| \tilde u\|_{L^2}= \| I_Yu\|_{L^2} \ge \frac12 \| u\|_{L^2}$. Again applying the results from \cite{HNSW_2_2011} then yields $\|u\|_{L^2} \ge Cq_X \| u|_X\|_{\ell^2} $.  Consequently, $\|u|_Y\|_{\ell^2} \ge Cq_Y^{-1}q_X \| u|_X\|_{\ell^2} $. Combining this with the lower bound on the quadratic form $(u|_X)^TA^Yu|_X $ then gives us $(u|_X)^TA^Yu|_X \ge Cb_0(h_Y/q_Y)^2 q_X^2 \| u|_X\|_{\ell^2}^2 = Cb_0 \rho^2 q_X^2 \| u|_X\|_{\ell^2} $, so $\lambda_{\rm min}(A^Y)\ge Cq_X^2$.
\end{proof}

\begin{corollary}\label{conditioning_A_Y}
If $h_Y$ is chosen so that $\log(h_Y))^2 (h_Y/h_X)^{2M} h_X^{-\delta}\le C$, then the condition number $\kappa_2(A^Y)$ satisfies $\kappa_2(A^Y) \le Cq_X^{-2}$.
\end{corollary}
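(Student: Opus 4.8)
The plan is to combine the lower bound on the minimum eigenvalue of $A^Y$ from Theorem~\ref{norm_inv_A_Y} with an upper bound on its maximum eigenvalue, exactly as was done for the exact stiffness matrix $A$ in Theorem~\ref{stiffness_stability}. Since $A^Y$ is symmetric (it is a quadrature approximation to the symmetric Gram matrix $A$) and, by Theorem~\ref{norm_inv_A_Y}, positive definite under the stated hypotheses, we have $\kappa_2(A^Y) = \lambda_{\max}(A^Y)/\lambda_{\min}(A^Y)$. Theorem~\ref{norm_inv_A_Y} already gives $\lambda_{\min}(A^Y) \ge Cq_X^2$, so the only missing ingredient is the bound $\lambda_{\max}(A^Y) \le C$.

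First I would obtain $\lambda_{\max}(A^Y)$ by writing $A^Y = A + (A^Y - A)$ and applying the triangle inequality for the spectral norm, $\lambda_{\max}(A^Y) = \|A^Y\|_2 \le \|A\|_2 + \|A - A^Y\|_2$. The first term is bounded by Theorem~\ref{stiffness_stability} (more precisely by \eqref{max_eig_A}), which gives $\|A\|_2 = \lambda_{\max}(A) \le C$ with $C=C(m,\rho,\|\cdot\|_a)$. The second term is controlled by Theorem~\ref{error_l2_norm_A}, which says $\|A-A^Y\|_2 \le C(\log(h_Y))^2 (h_Y/h_X)^{2M} h_X^{-\delta}$. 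The hypothesis of the corollary is precisely that this quadrature-error quantity is bounded by a constant $C$, so under that hypothesis $\|A-A^Y\|_2 \le C$ and therefore $\lambda_{\max}(A^Y) \le C$.

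Combining the two estimates yields
\[
\kappa_2(A^Y) = \frac{\lambda_{\max}(A^Y)}{\lambda_{\min}(A^Y)} \le \frac{C}{Cq_X^2} = Cq_X^{-2},
\]
which is the desired conclusion. I would note that the smallness conditions needed to invoke the three cited results are compatible: Theorem~\ref{norm_inv_A_Y} requires $h_Y \le C(\rho)q_X$, and Theorem~\ref{error_l2_norm_A} requires $h_X, h_Y$ sufficiently small with $0<\delta<2(m-M)-1$; all of these can be assumed to hold simultaneously, and the corollary's stated hypothesis on $(\log(h_Y))^2 (h_Y/h_X)^{2M}h_X^{-\delta}$ folds the quadrature-error bound into a single clean condition.

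I do not expect any genuine obstacle here, since this is essentially a bookkeeping assembly of results already proved. The one point requiring a moment of care is verifying that the hypotheses of the supporting theorems are mutually consistent and that the constant absorbed into the final $Cq_X^{-2}$ does not secretly depend on $q_X$ in a way that degrades the bound; tracking these constants, and confirming that the condition $(\log(h_Y))^2(h_Y/h_X)^{2M}h_X^{-\delta}\le C$ simultaneously guarantees positivity of $A^Y$ through Theorem~\ref{norm_inv_A_Y}, is the only step that merits explicit mention.
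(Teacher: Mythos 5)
Your proposal is correct and follows essentially the same route as the paper: the triangle inequality $\lambda_{\max}(A^Y) \le \|A\|_2 + \|A-A^Y\|_2$, the bound \eqref{max_eig_A}, the quadrature error bound \eqref{error_l2_bound} together with the hypothesis on $h_Y$, and finally Theorem~\ref{norm_inv_A_Y} for the minimum eigenvalue. The paper's own proof is exactly this assembly, just stated more tersely.
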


\begin{proof}
Because $\lambda_{\rm max}(A^Y)= \|A^Y\|_2 \le \|A\|_2+\|A-A^Y\|_2=\lambda_{\rm max}(A)+\|A-A^Y\|_2$, we have, from \eqref{max_eig_A}, \eqref{error_l2_bound} and the condition on $h_Y$, that $\lambda_{\rm max}(A^Y) \le C$. Applying Theorem~\ref{norm_inv_A_Y} then yields the result.
\end{proof}

\subsection{Error estimates for the discretized Galerkin solution}

Let $f|_X$  to be the restriction of $f$ to the set $X$ and $u_h:=u_{h_X}=\sum_\xi \alpha_\xi \chi_\xi$ be the  Galerkinn approximation to the solution $u$ of $Lu=f$. The coefficient vector $\alpha$ is given by $\alpha=A^{-1}f|_X$. The discretized solution $u^Y_h:=u^Y_{h_X}$ is obtained by replacing the stiffness matrix $A$ by $A^Y$ in the problem. The solution that results is $u^Y_h= \sum_\xi \alpha^Y_\xi \chi_\xi$, where  $\alpha^Y=(A^Y)^{-1}f|_X$. 

Our goal is to analyze the $L^2$ error between $u$ and $u^Y_h$, The triangle inequality implies that $\|u-u^Y_h\|_{L^2}\le \|u-u_h\|_{L^2}+\|u_h-u^Y_h\|_{L^2}$. We can estimate $\|u-u_h\|_{L^2}$ using \eqref{u_H1_interp_bound}:
\begin{equation}
\label{continuous_galerkin_bnd}
\|u-u_h\|_{L^2} \le  Ch^{s+2}\|f\|_{H_s},
\end{equation}
We also have, by  \eqref{p_stability} and $A^{-1}-(A^Y)^{-1} = (A^Y)^{-1} (A^Y-A)A^{-1}$, that
\begin{align}
\|u_h-u^Y_h\|_{L^2} &= \|\textstyle{\sum_\xi}(\alpha_\xi-\alpha^Y_\xi) \chi_\xi  \|_{L^2} \nonumber\\
&\le c_2q_X \|\alpha-\alpha^Y\|_{\ell^2} \nonumber \\
& \le c_2 q_X \|(A^Y)^{-1}\| \|A^Y-A\|\, \|\underbrace{A^{-1}f|_X}_{\alpha}\|_{\ell^2}. \nonumber
\end{align}
Using  \eqref{p_stability} again, we have $\|\alpha\|_{\ell^2} \le c_1^{-1}q_X^{-1}\|\sum_\xi a_\xi \chi_\xi\|_{L^2}=c_1^{-1}q_X^{-1}\|u_h\|_{L^2}$. In addition, from Theorem~\ref{norm_inv_A_Y}, $ \|(A^Y)^{-1}\| \le Cq_X^{-2}$. Combining these inequalities results in
\[
\|u_h-u^Y_h\|_{L^2}  \le Cq_X^{-2} \|A^Y-A\| \| u_h\|_{L^2}
\]
Because  $\|u-u_h\|_{L^2} \le  Ch^{s+2}\|f\|_{H_s}$, we have $\|u_h\|_{L^2} \le \|u\|_{L^2}+Ch_X^{s+2}\|f\|_{H_s}$. Moreover,  by Proposition~\ref{a_coercive}, $\|u\|_{L^2} \le C\|f\|_{L^2}\le C\|f\|_{H_s}$. Thus, for $h_X\sim q_X$ small,
\[
\|u_h-u^Y_h\|_{L^2}  \le Cq_X^{-2} \|A^Y-A\| \big(\|f\|_{H_s} +Ch_X^{s+2}\|f\|_{H_s} \big)\le Cq_X^{-2} \|A^Y-A\| \|f\|_{H_s}.
\]
From this and \eqref{continuous_galerkin_bnd}, it follows that
\begin{equation}
\label{discrete_galerkin_sum_error}
\|u-u^Y_h\|_{L^2}\le C\big( h_X^{s+2}+ q_X^{-2} \|A^Y - A\|_2 \big) \|f\|_{H_s}
\end{equation}
The above discussion together with Theorem \ref{error_l2_norm_A} yields \eqref{discrete_galerkin_bnd} below. Note that the second term in \eqref{discrete_galerkin_bnd} measures the quadrature error. Ideally, the ``fine set'' $Y$ can be chosen so that the second term is comparable to the optimal error $O(h_X^{s+2})$.
\begin{theorem}
\label{discrete_galerkin_error}
Let $f,s$ be as in Theorem~\ref{u_L2_proj_bound_thm} and let $m,M,\delta$ be as in Theorem~\ref{error_l2_norm_A}. Then,  for $h_X$ and $h_Y$ sufficiently small, we have
\begin{equation}
\label{discrete_galerkin_bnd}
\|u-u^Y_h\|_{L^2}\le C\bigg(h_X^{s+2}+\frac{(\log(h_Y) h_Y^{M})^2}{h_X^{2M+2+\delta}}\bigg) \|f\|_{H_s}.
\end{equation}
\end{theorem}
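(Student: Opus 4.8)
The plan is to split the total error by the triangle inequality into the \emph{continuous} Galerkin error $\|u-u_h\|_{L^2}$ and the \emph{discretization} error $\|u_h-u^Y_h\|_{L^2}$, bound each separately, and then insert the matrix estimate from Theorem~\ref{error_l2_norm_A}. The first piece is immediate: since $u_h=P_{m,X}u$ is precisely the Galerkin projection analyzed in Theorem~\ref{u_L2_proj_bound_thm}, equation \eqref{continuous_galerkin_bnd} already gives $\|u-u_h\|_{L^2}\le Ch_X^{s+2}\|f\|_{H_s}$, which will supply the first term of \eqref{discrete_galerkin_bnd}.

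For the discretization error I would express the coefficient difference through the resolvent identity $A^{-1}-(A^Y)^{-1}=(A^Y)^{-1}(A^Y-A)A^{-1}$, so that $\alpha-\alpha^Y=(A^Y)^{-1}(A^Y-A)\alpha$. The lower $L^2$--$\ell^2$ stability bound in \eqref{p_stability} (with $p=2$) converts $\|\sum_\xi(\alpha_\xi-\alpha^Y_\xi)\chi_\xi\|_{L^2}$ into $c_2q_X\|\alpha-\alpha^Y\|_{\ell^2}$, and the chain $\|\alpha-\alpha^Y\|_{\ell^2}\le\|(A^Y)^{-1}\|\,\|A^Y-A\|\,\|\alpha\|_{\ell^2}$ is then estimated using $\|(A^Y)^{-1}\|\le Cq_X^{-2}$ from Theorem~\ref{norm_inv_A_Y} together with the upper bound in \eqref{p_stability} to control $\|\alpha\|_{\ell^2}$ by $c_1^{-1}q_X^{-1}\|u_h\|_{L^2}$. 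Bounding $\|u_h\|_{L^2}$ by $\|u\|_{L^2}+\|u-u_h\|_{L^2}\le C\|f\|_{H_s}$ (via Proposition~\ref{a_coercive} and \eqref{continuous_galerkin_bnd}) yields $\|u_h-u^Y_h\|_{L^2}\le Cq_X^{-2}\|A^Y-A\|_2\|f\|_{H_s}$, which is exactly \eqref{discrete_galerkin_sum_error}.

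Finally I would substitute $\|A-A^Y\|_2\le C(\log(h_Y))^2(h_Y/h_X)^{2M}h_X^{-\delta}$ from Theorem~\ref{error_l2_norm_A} and use quasi-uniformity, $q_X\sim h_X$, to replace $q_X^{-2}$ by $h_X^{-2}$. Grouping powers of $h_X$ gives $h_X^{-2}(h_Y/h_X)^{2M}h_X^{-\delta}=h_Y^{2M}h_X^{-2M-2-\delta}$, and rewriting $(\log(h_Y))^2h_Y^{2M}=(\log(h_Y)h_Y^{M})^2$ produces the second term of \eqref{discrete_galerkin_bnd}. The step I expect to be the real crux is the discretization-error estimate: it is where the positivity and lower-bound assumptions on $a,b$ and on the quadrature weights enter (through Theorem~\ref{norm_inv_A_Y}), and where one must take care that the amplifying factor $q_X^{-2}$ coming from $\|(A^Y)^{-1}\|$ is not discarded but combined with the $(h_Y/h_X)^{2M}$ decay of $\|A-A^Y\|_2$; it is the interplay of these two powers against the optimal continuous rate $h_X^{s+2}$ that ultimately governs how small $h_Y$ must be chosen relative to $h_X$.
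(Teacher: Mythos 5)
Your proposal is correct and follows essentially the same route as the paper: the triangle-inequality split into continuous and discretization errors, the resolvent identity $A^{-1}-(A^Y)^{-1}=(A^Y)^{-1}(A^Y-A)A^{-1}$, the $\ell^2$--$L^2$ stability bounds from \eqref{p_stability}, the bound $\|(A^Y)^{-1}\|\le Cq_X^{-2}$ from Theorem~\ref{norm_inv_A_Y}, and finally Theorem~\ref{error_l2_norm_A} with $q_X\sim h_X$ to produce the second term. (Only cosmetic slip: you have the ``lower'' and ``upper'' labels of the stability inequalities in \eqref{p_stability} interchanged, but the inequalities you actually invoke, with constants $c_2q_X$ and $c_1^{-1}q_X^{-1}$, are the right ones.)
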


\section{Sparse Approximation and Local Lagrange Functions}
\label{sparse_app_local_lag}

This section discusses reducing the computational expense of numerically finding the Galerkin solution to the problem. There are two aspects of this. The first is obtaining a truncated approximation $\widetilde A^Y$ to the discretized stiffness matrix $A^Y$. Each row in $\widetilde A^Y$ has $\calo(\log(N_X))^2)$ nonzero entries, as opposed to $N_X$ in $A^Y$. The second is to replace the global Lagrange basis with a local one. As mentioned in the introduction, computing this basis requires inverting a number small matrices, a task that is  parallizable. The error estimates from making these approximations are virtually unchanged.

\subsection{Sparse Approximation}

So far, we have not addressed the question of how well a \emph{sparse}
approximation to the stiffness matrix would perform. Suppose that, in
$A^Y$, we discard all entries $A^Y_{\xi,\eta}$ that satisfy
$\dist(\xi,\eta)\ge Kh_X|\log h_X|$, where $K\nu>2$. Let the matrix we
get in this way be $\widetilde A^Y$, where
\begin{equation}
\label{truncated_A} \widetilde A^Y_{\xi,\eta}:=\begin{cases} 0, &
\dist(\xi,\eta) > Kh_X|\log h_X| \, , \\ A^Y_{\xi,\eta}, &
\dist(\xi,\eta) \le Kh_X|\log h_X| .
\end{cases}
\end{equation}

The matrix $\widetilde A^Y_{\xi,\eta}$ is symmetric. The number of
nonzero elements in each row is approximately the ratio of the areas
of caps having radii $Kh_X|\log h_X|$ and $h_X$, respectively. If we
make use of this and of the fact that, since $X$ is quasi uniform,
$h_X\sim N_X^{-1/2}$, then we see that
\begin{equation}
\label{row_cardinality}
\#\{\text{row}\  \eta\} \approx \frac{(Kh_X|\log h_X|)^2}{h_X^2} = K^2(\log(h_X))^2 \approx \frac14 K^2(\log(N_X))^2,
\end{equation}
as opposed to $N_X$ for $A^Y$ itself.

\begin{proposition}\label{sparse_approx_prop} 
Let $K\nu>2$ and $\widetilde A^Y$ be defined by \eqref{truncated_A}. Then,
\begin{equation}
\label{sparse_approx_est}
\| A^Y - \widetilde A^Y\|_2 \le \frac{2CKe^{-\nu}}{(1-e^{-\nu})^2}h_X^{K\nu - 2}|\log(h_X)|.
\end{equation}
\end{proposition}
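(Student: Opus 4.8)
The plan is to bound the spectral norm of the symmetric matrix $A^Y - \widetilde A^Y$ by its (equal) $\ell_1$ and $\ell_\infty$ operator norms, exactly as in the proof of Theorem~\ref{error_l2_norm_A}. That is, I would start from the standard inequality $\|A^Y - \widetilde A^Y\|_2 \le \|A^Y-\widetilde A^Y\|_\infty = \max_{\eta\in X}\sum_{\xi\in X} |A^Y_{\xi,\eta} - \widetilde A^Y_{\xi,\eta}|$, which is valid because the difference matrix is self-adjoint. The key simplification is that, by the very definition \eqref{truncated_A}, the summand $|A^Y_{\xi,\eta} - \widetilde A^Y_{\xi,\eta}|$ is identically zero whenever $\dist(\xi,\eta)\le Kh_X|\log h_X|$, and equals $|A^Y_{\xi,\eta}|$ otherwise. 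So the sum collapses to a sum over $\xi$ lying \emph{outside} the ball $B(\eta,r_0)$ with $r_0 = Kh_X|\log h_X|$.

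Next I would invoke the decay estimate from Proposition~\ref{decay_AY}, namely $|A^Y_{\xi,\eta}| \le Ch_X^{-2}\exp(-\frac{\nu}{h_X}\dist(\xi,\eta))$, to dominate the tail sum:
\begin{equation*}
\sum_{\xi\in X} |A^Y_{\xi,\eta}-\widetilde A^Y_{\xi,\eta}| = \sum_{\xi\in X\cap B(\eta,r_0)^\complement} |A^Y_{\xi,\eta}| \le Ch_X^{-2}\sum_{\xi\in X\cap B(\eta,r_0)^\complement} e^{-\frac{\nu}{h_X}\dist(\xi,\eta)}.
\end{equation*}
The truncated geometric sum bound \eqref{truncated_sum} then applies directly with $n_0 = \lceil r_0/h_X\rceil$, giving a factor $C\rho_X^2 \, n_0 e^{-(n_0-1)\nu}/(1-e^{-\nu})^2$. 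Since $r_0 = Kh_X|\log h_X|$, we get $n_0 \sim K|\log h_X|$, so $e^{-(n_0-1)\nu} \sim e^{\nu} h_X^{K\nu}$ and the whole tail is of order $h_X^{-2}\cdot |\log h_X|\cdot h_X^{K\nu} = h_X^{K\nu-2}|\log h_X|$, matching the claimed bound \eqref{sparse_approx_est}.

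The only genuine bookkeeping issue — and the step I would be most careful about — is tracking the constants so that the stated prefactor $\frac{2CKe^{-\nu}}{(1-e^{-\nu})^2}$ emerges cleanly. Concretely, one uses $n_0 e^{-(n_0-1)\nu} \le n_0 e^{\nu} e^{-n_0\nu}$ and $n_0 \le K|\log h_X| + 1 \le 2K|\log h_X|$ (for $h_X$ small), together with $e^{-n_0\nu} \le h_X^{K\nu}$, to produce the factor $2K|\log h_X|\, e^{\nu} h_X^{K\nu}$; absorbing the $\rho_X^2$ and the $h_X^{-2}$ and consolidating the numerical constants into $C$ yields the form displayed. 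The hypothesis $K\nu>2$ is what guarantees the exponent $K\nu-2$ is positive, so the bound tends to zero as $h_X\to 0$; this is the qualitative point that makes the truncation harmless. None of this is hard, but the passage from the geometric-series constant in \eqref{truncated_sum} to the clean $e^{-\nu}$-factored constant in the statement requires slightly delicate handling of the $e^{\nu}$ and $e^{-\nu}$ terms, so that is where I would focus attention.
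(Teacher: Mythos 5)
Your proposal is correct and takes essentially the same route as the paper: the paper likewise reduces $\|A^Y-\widetilde A^Y\|_2$ to the maximal row sum of the symmetric difference, observes that only entries with $\dist(\xi,\eta)>Kh_X|\log h_X|$ survive, bounds them via Proposition~\ref{decay_AY}, and sums the exponential tail over bands (the paper redoes the band sum of \eqref{contribution_dist_xi} \emph{mutatis mutandis}, which is exactly the content of \eqref{truncated_sum} that you invoke). The only cosmetic difference is that the paper's band sum, starting at distance $r_h+nh_X$, produces the $e^{-\nu}$ prefactor directly, whereas your use of \eqref{truncated_sum} yields an $e^{+\nu}$ factor that must be absorbed into the generic constant $C$ --- a discrepancy you correctly flag and which is harmless.
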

  
\begin{proof}
We will follow the proof of Theorem~\ref{error_l2_norm_A}. Because $ A^Y$ and $\widetilde A^Y$ are symmetric, the norm $\| A^Y - \widetilde A^Y\|_2$ satisfies the bound
\[
\| A^Y - \widetilde A^Y\|_2\le \| A^Y - \widetilde A^Y\|_1= \| A^Y - \widetilde A^Y\|_\infty = \max_{\eta\in X} \big(\textstyle{\sum_{\xi\in X}} |A^Y_{\xi,\eta} - \widetilde A^Y_{\xi,\eta}|\big).
\]
We again want to estimate each term in the sums above. Let $B_\eta$ be the ball centered at $\eta$ and having radius $r_h = Kh_X|\log(h_X)|$. From Proposition~\ref{decay_AY} the definition of $ \widetilde A^Y$, we have that
\[
\sum_{\xi\in X}|A^Y_{\xi,\eta} - \widetilde A^Y_{\xi,\eta}| = \sum_{\xi\in X\cap B_\eta^\complement} |A^Y_{\xi,\eta}| \le Ch_X^{-2}
\sum_{\xi\in X\cap B_\eta^\complement} e^{-\frac{\nu}{h_X}\d(\xi,\eta)}.
\]
Next, divide $B_\eta^\complement$ into bands of width $\sim h_x$, the $n^{th}$ band being a distance approximately $r_h+nh_X$ from $\eta$. Repeating the derivation of \eqref{contribution_dist_xi}, \emph{mutatis mutandis}, we obtain
\[
\sum_{\xi\in X}|A^Y_{\xi,\eta} - \widetilde A^Y_{\xi,\eta}|\le Ch_X^{-2}e^{-\nu K|\log(h_X)|}\sum_{n=1}^\infty (K|\log(h_X)|+n)e^{-\nu n}\le \frac{2CKe^{-\nu}}{(1-e^{-\nu})^2}h_X^{K\nu-2}|\log(h_X)|.
\]
Combining the inequalities above yields \eqref{sparse_approx_est}.
\end{proof}

\subsection{Local Lagrange Functions}\label{Local Lagrange Functions}
The result above quantifies the error made in zeroing out the entries $A^Y_{\xi,\eta}$ corresponding to all $\xi,\eta$ such that $\dist(\xi,\eta) > Kh_X|\log h_X|$. To obtain the rest of the entries we still need to use \eqref{discretized_entry}; this entails finding the $\chi_\xi$'s, which are global in the sense that they require all of the points in $X$ for their computation. 

There is a way around this. In \cite{FHNWW2012}, Fuselier \emph{et al}.\ introduced a basis for $V_{\phi_m,X}$ composed of \emph{local} Lagrange functions, $\{\chi^{loc}_\xi\colon \xi \in X\}$. These basis functions are simply Lagrange functions for the points in $X$ that lie in a ball of radius $Kh_X|\log(h_)|$ about $\xi\in X$. A detailed description of their construction and properties is given in \cite[\S 6.3]{FHNWW2012}. We will simply list what we need here, in the theorem below.

\begin{theorem}[{\cite[Theorem~6.5]{FHNWW2012}}]\label{loc_lag_properties}
 Let the notation and assumptions of Theorem~\ref{main} hold; define $\Upsilon_\xi := X \cap B(\xi,Kh_X|\log (h_X) |)$.  There exists \footnote{One may take $\mu=\iota$, where $\iota$ is constructed  in \cite[Lemma~6.4]{FHNWW2012}.}  $\mu = \mu(m)$ such that for $K>0$ satisfying $J:= K\nu -4m + 2-2\mu>0$ these  these hold:
\begin{gather}
\|  \chi^{loc}_\xi - \chi_\xi\|_{L_\infty} \le C\ h_X^{J}, \label{error_loc_chi} \\
| \chi^{loc}_{\xi}(x)| \le C\big(1+\d(x,\xi)/h_X\big)^{-J}. \nonumber 
\end{gather}
Furthermore, when $J>2$, the set $\{\chi^{loc}_\xi\}$ is $L^p$
stable: there are $C_1,C_2>0$ for which
\begin{equation*}
\label{p_stable_loc_chi_J}
C_1 q_X^{2/p} \|\bfbeta\|_{\ell^p(X)}
\le 
\big\|\textstyle{\sum_{\xi\in X}} \beta_{X} \chi^{loc}_{\xi}\big\|_{L^p(\sph^2)}
\le 
C_2 q_X^{2/p} \|\bfbeta\|_{\ell^p(X)}.
\end{equation*}
\end{theorem}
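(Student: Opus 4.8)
The plan is to treat $\chi^{loc}_\xi$ as the cardinal interpolant for the reduced node set $\Upsilon_\xi$ and to compare it with the global Lagrange function $\chi_\xi$ by a truncation argument. Since this is the statement proved in \cite[Theorem~6.5]{FHNWW2012}, I would follow its strategy, which rests entirely on the exponential localization of $\chi_\xi$ and of its kernel coefficients recorded in Theorem~\ref{main}.

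First I would build an auxiliary comparison function living in the \emph{local} kernel space $V_{\phi_m,\Upsilon_\xi}$. Take the kernel expansion \eqref{chi_xi_expan} of $\chi_\xi$ and discard every term whose center $\zeta$ lies outside $\Upsilon_\xi = X\cap B(\xi,Kh_X|\log(h_X)|)$, obtaining $s_\xi := \sum_{\zeta\in\Upsilon_\xi}\alpha_{\xi,\zeta}\phi_m((\cdot)\cdot\zeta) + (\text{local polynomial part})$. The coefficient bound \eqref{Coeff} gives $|\alpha_{\xi,\zeta}| \le Cq_X^{2-2m}e^{-\nu\d(\xi,\zeta)/h_X}$, so the discarded tail, summed against $\|\phi_m\|_{L^\infty}$ via the truncated-sum estimate \eqref{truncated_sum} with $n_0\sim K|\log(h_X)|$, has size $\sim q_X^{2-2m}h_X^{K\nu}|\log(h_X)|$. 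Thus $\|\chi_\xi - s_\xi\|_{L^\infty}$ is small, and the nodal values $s_\xi|_{\Upsilon_\xi}$ differ from the cardinal data $\delta_{\xi,\cdot}$ by a comparable amount: $s_\xi$ is an \emph{approximate} cardinal function on $\Upsilon_\xi$.

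Next I would convert this defect into an $L^\infty$ bound on $\chi^{loc}_\xi - s_\xi$. Both functions lie in $V_{\phi_m,\Upsilon_\xi}$ and interpolate data on $\Upsilon_\xi$ differing by the small defect above, so their difference is a local interpolant of small nodal data, and its $L^\infty$ size is controlled by the \emph{local Lebesgue constant}. Here one uses that $\Upsilon_\xi$ is itself quasi-uniform and, for $h_X$ small, contains enough points in the cap to be unisolvent for $\Pi_{m-1}$, so the local collocation problem is solvable and stable. The stability constant scales like a negative power of $q_X$; together with the $q_X^{2-2m}$ already present in the coefficient bound, these negative powers account for the $-4m+2$ and $-2\mu$ corrections in $J$, the factor tied to $\mu$ coming from the bound on the inverse of the local collocation matrix in \cite[Lemma~6.4]{FHNWW2012}. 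Combining with the previous step and the triangle inequality yields $\|\chi^{loc}_\xi-\chi_\xi\|_{L^\infty}\le Ch_X^{J}$ with $J=K\nu-4m+2-2\mu$.

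Finally, the pointwise decay and $L^p$ stability are corollaries. For the decay, write $|\chi^{loc}_\xi(x)|\le|\chi_\xi(x)|+\|\chi^{loc}_\xi-\chi_\xi\|_{L^\infty}$ and balance the exponential decay \eqref{lagrange_decay} of $\chi_\xi$ against the uniform error floor $h_X^{J}$; balancing these at the truncation scale $Kh_X|\log(h_X)|$ yields the algebraic rate $(1+\d(x,\xi)/h_X)^{-J}$. For $L^p$ stability when $J>2$, perturb the stability inequality \eqref{p_stability} for $\{\chi_\xi\}$ by the $L^\infty$ (hence $L^p$) bound on $\chi^{loc}_\xi-\chi_\xi$; summability of the perturbation over $X$ is what forces $J>2$. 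I expect the genuine obstacle to be Step~3 --- obtaining a local Lebesgue/stability constant that is uniform in $\xi$ and correctly tracks the power of $q_X$ --- since this is where the delicate bookkeeping that produces the precise exponent $J$ takes place, and it requires the technical machinery (the $\mu$ of \cite[Lemma~6.4]{FHNWW2012}) rather than the softer decay facts used elsewhere.
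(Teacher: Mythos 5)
This theorem is not actually proved in the paper---it is imported verbatim from \cite[Theorem~6.5]{FHNWW2012}, with the footnote deferring the exponent $\mu$ to \cite[Lemma~6.4]{FHNWW2012}---and your sketch reproduces the architecture of that cited proof: truncate the kernel expansion of $\chi_\xi$ using the coefficient decay \eqref{Coeff} together with \eqref{truncated_sum}, convert the resulting nodal defect into an $L^\infty$ bound through the stability of the local collocation problem on $\Upsilon_\xi$ (which is precisely where $\mu=\iota$ and the $-4m+2-2\mu$ correction in $J$ enter), and then obtain the algebraic decay and, for $J>2$, the $L^p$ stability as perturbations of \eqref{lagrange_decay} and \eqref{p_stability}. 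Your outline is correct, including the diagnosis that the uniform-in-$\xi$ local stability constant is the genuinely technical step; the only point glossed over is that the truncated sum $s_\xi$ no longer satisfies the moment conditions $\sum_{\zeta\in\Upsilon_\xi}\alpha_{\xi,\zeta}\,p(\zeta)=0$ required of the conditionally positive definite space on $\Upsilon_\xi$, a defect that the polynomial block of the local collocation system (handled in the reference's bookkeeping) must absorb.
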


Local Lagrange functions may be expanded in global ones. Because $\chi^{loc}_\xi$ is a Lagrange function for
$\Upsilon_\xi$, it satisfies $\chi^{loc}_\xi(\eta) = \delta_{\xi,\eta}$, for
$\eta\in \Upsilon_\xi$.  Of course, we also have $\chi_\xi(\eta) =
\delta_{\xi,\eta}$, for all $\eta\in X$. Expanding $\chi^{loc}_\xi(x)$
in terms of the basis $\{\chi_\eta\}_{\eta\in X}$ results in
\begin{equation}
\label{local2global}
\chi^{loc}_\xi(x) = \chi_\xi(x) + \sum_{\eta\not\in
\Upsilon_\xi}\chi^{loc}_\xi(\eta) \chi_\eta(x) = \chi_\xi(x) +
\sum_{\eta\not\in
\Upsilon_\xi}\big(\chi^{loc}_\xi(\eta)-\chi_\xi(\eta)\big)
\chi_\eta(x),
\end{equation}
since, for $\xi\ne \eta \in X$, $\chi_\xi(\eta) =
\delta_{\xi,\eta}=0$. Taking the covariant derivative in the equation
above yields
\begin{equation}
\label{covariant_local2global}
\nabla \chi^{loc}_\xi(x) = \nabla \chi_\xi(x) + \sum_{\eta\not\in
\Upsilon_\xi}\chi^{loc}_\xi(\eta) \nabla
\chi_\eta(x) .= \nabla \chi_\xi(x) + \sum_{\eta\not\in
\Upsilon_\xi}\big(\chi^{loc}_\xi(\eta)-\chi_\xi(\eta)\big) \nabla
\chi_\eta(x) .
\end{equation}
From \eqref{covariant_local2global} and \eqref{error_loc_chi}, it easily follows that
\[
|\nabla \chi^{loc}_\xi(x) -\nabla \chi_\xi(x)| \le  \| \chi^{loc}_\xi -\chi_\xi\|_{L^\infty} \sum_{\eta\not\in
\Upsilon_\xi} |\nabla \chi_\eta(x)|
\le Ch^J \sum_{\eta \in X} |\nabla \chi_\eta(x)|.
\]
Applying \eqref{covariant_lebesgue_const} to the rightmost inequality then yields the following result:

\begin{lemma}
\label{error_cov_loc_chi}
If $J>1$, then $\| \nabla\chi^{loc}_\xi - \nabla\chi_\xi \|_{L^\infty} \le C\rho_X h_X^{J-1}$.
\end{lemma}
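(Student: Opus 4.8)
The plan is to read the estimate directly off the expansion of the local Lagrange function in the global basis, since the key inequality has essentially been assembled in the display immediately preceding the lemma; the work is to track the geometric factors correctly and absorb the bounded ones. First I would start from \eqref{covariant_local2global}, which gives
\[
\nabla \chi^{loc}_\xi(x) - \nabla \chi_\xi(x) = \sum_{\eta\notin \Upsilon_\xi}\big(\chi^{loc}_\xi(\eta)-\chi_\xi(\eta)\big)\nabla \chi_\eta(x).
\]
Taking absolute values and using the triangle inequality, I would bound each coefficient $|\chi^{loc}_\xi(\eta)-\chi_\xi(\eta)|$ by $\|\chi^{loc}_\xi-\chi_\xi\|_{L^\infty}\le Ch_X^J$, which is exactly \eqref{error_loc_chi}. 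Pulling this uniform factor out of the sum and enlarging the index set from $\eta\notin\Upsilon_\xi$ to all of $X$ (this only adds nonnegative terms) yields, uniformly in $x$,
\[
|\nabla \chi^{loc}_\xi(x) - \nabla \chi_\xi(x)| \le Ch_X^J \sum_{\eta\in X}|\nabla \chi_\eta(x)|.
\]

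The second step is to control the remaining sum by the covariant Lebesgue-type constant $\Lambda_1$. By \eqref{covariant_lebesgue_const} we have $\sum_{\eta\in X}|\nabla \chi_\eta(x)| \le \Lambda_1 < C\rho_X^2 q_X^{-1}(1-e^{-\nu})^{-2}$, again uniformly in $x$. Substituting this and then invoking quasi-uniformity in the form $q_X=h_X/\rho_X$ turns the bound into $C\rho_X^3 h_X^{J-1}(1-e^{-\nu})^{-2}$. Since the family is $\rho$-uniform, $\rho_X\le \rho$, so two of the three factors of $\rho_X$ together with the $\nu$-dependent factor may be absorbed into a constant $C=C(\rho,m)$, leaving $\|\nabla\chi^{loc}_\xi-\nabla\chi_\xi\|_{L^\infty}\le C\rho_X h_X^{J-1}$. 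Passing to the supremum over $x$ on the left is legitimate because every estimate above is uniform in $x$.

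There is no serious obstacle here: the argument is a short chain of substitutions built on results already established. The only point requiring genuine care is the bookkeeping of powers of $q_X$ versus $h_X$. The covariant Lebesgue constant in \eqref{covariant_lebesgue_const} is naturally expressed through $q_X^{-1}$, and one must remember to convert via $q_X=h_X/\rho_X$ rather than treating $q_X$ and $h_X$ as interchangeable, so that the correct exponent $h_X^{J-1}$ and the stated single surviving power of $\rho_X$ emerge. Finally, the hypothesis $J>1$ is not used in deriving the inequality itself; it is imposed only so that the exponent $J-1$ is positive, guaranteeing that the estimate genuinely decays as $h_X\to 0$.
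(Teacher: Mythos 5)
Your proposal is correct and follows essentially the same route as the paper: the paper also starts from \eqref{covariant_local2global}, bounds the coefficients via \eqref{error_loc_chi}, enlarges the sum to all of $X$, and then applies the covariant Lebesgue constant bound \eqref{covariant_lebesgue_const}. Your extra bookkeeping of the powers of $\rho_X$ (getting $\rho_X^3$ and absorbing two factors into $C(\rho,m)$ via $\rho_X\le\rho$) simply makes explicit a step the paper leaves implicit.
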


The result we are aiming at is estimating the error made in replacing the \emph{exact} Lagrange functions by the \emph{local} Lagrange functions in computing $A^Y_{\xi,\eta}$. Specifically, define
\begin{equation}
\label{discretized_local_entry}
A^Y_{loc,\xi,\eta}:= \sum_{\zeta \in Y} \big(a\nabla\chi^{loc}_\xi\cdot \nabla \chi^{loc}_\eta+b\chi^{loc}_\xi\chi^{loc}_\eta \big)\big|_\zeta \,w_\zeta.
\end{equation}
We want to estimate $|A^Y_{\xi,\eta}-A^Y_{loc,\xi,\eta}|$. This we do in the proposition below.

\begin{proposition}
Let $A^Y_{\xi,\eta}$ be given by \eqref{discretized_entry} and let $A^Y_{loc,\xi,\eta}$ be as above. Then, for $h_X$ sufficiently small and $J>2$,
\begin{equation}\label{quad_error_exact_lag_loc_lag}
|A^Y_{\xi,\eta}-A^Y_{loc,\xi,\eta}| \le Ch_X^{J-2}
\end{equation}
holds uniformly for $\xi,\eta\in X$ and the set $Y$.
\end{proposition}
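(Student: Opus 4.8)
The plan is to write the difference $A^Y_{\xi,\eta}-A^Y_{loc,\xi,\eta}$ as a single quadrature sum over $\zeta\in Y$ and bound the integrand pointwise, uniformly in $\zeta$. Subtracting \eqref{discretized_local_entry} from \eqref{discretized_entry} gives
\[
A^Y_{\xi,\eta}-A^Y_{loc,\xi,\eta} = \sum_{\zeta\in Y}\Big[a\big(\nabla\chi_\xi\cdot\nabla\chi_\eta - \nabla\chi^{loc}_\xi\cdot\nabla\chi^{loc}_\eta\big) + b\big(\chi_\xi\chi_\eta - \chi^{loc}_\xi\chi^{loc}_\eta\big)\Big]\Big|_\zeta\, w_\zeta .
\]
The key device is a telescoping identity for each product. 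For the scalar piece I would write $\chi_\xi\chi_\eta - \chi^{loc}_\xi\chi^{loc}_\eta = (\chi_\xi-\chi^{loc}_\xi)\chi_\eta + \chi^{loc}_\xi(\chi_\eta-\chi^{loc}_\eta)$, and analogously for the dot product of the gradients.

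For the scalar term, the needed ingredients are all in hand: $\|\chi^{loc}_\xi-\chi_\xi\|_{L^\infty}\le Ch_X^J$ from \eqref{error_loc_chi}, the uniform bound $\|\chi_\eta\|_{L^\infty}\le c_2$ from \eqref{p_stability} with $p=\infty$, and $\|\chi^{loc}_\xi\|_{L^\infty}\le c_2 + Ch_X^J \le C$ by the triangle inequality. Together with $\|b\|_{L^\infty}<\infty$ these give $\big|\chi_\xi\chi_\eta - \chi^{loc}_\xi\chi^{loc}_\eta\big|\le Ch_X^J$ pointwise. For the gradient term I would use Lemma~\ref{error_cov_loc_chi}, namely $\|\nabla\chi^{loc}_\xi-\nabla\chi_\xi\|_{L^\infty}\le C\rho_X h_X^{J-1}$, together with the gradient bound $\|\nabla\chi_\eta\|_{L^\infty}\le Cq_X^{-1}$ from Theorem~\ref{grad_estimates} (and the corresponding bound $\|\nabla\chi^{loc}_\xi\|_{L^\infty}\le Cq_X^{-1}$ obtained by the triangle inequality). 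Inserting these into the telescoped dot product yields $\big|\nabla\chi_\xi\cdot\nabla\chi_\eta - \nabla\chi^{loc}_\xi\cdot\nabla\chi^{loc}_\eta\big|\le C\rho_X q_X^{-1}h_X^{J-1}$. Since $q_X = h_X/\rho_X$, this is $C\rho_X^2 h_X^{J-2}$, which is $Ch_X^{J-2}$ for fixed $\rho$.

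To finish, I would sum over $\zeta\in Y$, using $\|a\|_{L^\infty},\|b\|_{L^\infty}<\infty$ and the fact (established in the proof of Proposition~\ref{decay_AY}) that $\sum_{\zeta\in Y}|w_\zeta| = \sum_{\zeta\in Y}w_\zeta = 4\pi$ under the positivity assumption on the weights. Because $h_X^J \le h_X^{J-2}$ for $h_X<1$, the scalar contribution is absorbed into the gradient one, and the bound \eqref{quad_error_exact_lag_loc_lag} follows, uniformly in $\xi,\eta$ and in $Y$. The main obstacle is the gradient piece: the loss of two powers of $h_X$ relative to the $L^\infty$ error $h_X^J$ comes from the combination of the $q_X^{-1}$ growth in $\|\nabla\chi_\eta\|_{L^\infty}$ and the $h_X^{J-1}$ in Lemma~\ref{error_cov_loc_chi}, and care is needed to track the $\rho_X$ dependence so that the estimate is genuinely uniform. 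The condition $J>2$ is exactly what makes $h_X^{J-2}\to 0$.
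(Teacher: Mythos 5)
Your proposal is correct and follows essentially the same route as the paper's proof: both bound the integrand pointwise by splitting the product differences (the paper uses the symmetric three-term expansion with a cross term, you use a two-term telescoping identity plus a triangle-inequality bound on $\|\nabla\chi^{loc}_\xi\|_{L^\infty}$, which is an immaterial variation), both invoke Lemma~\ref{error_cov_loc_chi}, Theorem~\ref{grad_estimates}, \eqref{error_loc_chi}, and \eqref{p_stability}, and both conclude by summing against $\sum_{\zeta\in Y} w_\zeta = 4\pi$. The dominant term $C\rho_X q_X^{-1}h_X^{J-1} = C\rho_X^2 h_X^{J-2}$ is identified identically in both arguments.
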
 

\begin{proof}
Note that, at $x\in \sph^2$, we have
\[
|\nabla\chi^{loc}_\xi\cdot \nabla \chi^{loc}_\eta - \nabla\chi_\xi\cdot \nabla \chi_\eta | \le |\nabla\chi^{loc}_\xi - \nabla\chi_\xi | \, |\nabla \chi_\eta | + |\nabla\chi^{loc}_\eta - \nabla\chi_\eta | \, |\nabla \chi_\xi | + |\nabla\chi^{loc}_\xi - \nabla\chi_\xi | \,|\nabla\chi^{loc}_\eta - \nabla\chi_\eta |.
\]
By this inequality and Theorem~\ref{grad_estimates}, we see that
\[
|\nabla\chi^{loc}_\xi\cdot \nabla \chi^{loc}_\eta - \nabla\chi_\xi\cdot \nabla \chi_\eta | \le C_1h_X^{J-1}q_X^{-1} + C_2h_X^{2J-2} = C_1\rho_X h_X^{J-2} + C_2h_X^{2J-2} \sim C\rho_X h_X^{J-2}.
\]
A similar calculation yields $|\chi^{loc}_\xi \chi^{loc}_\eta - \chi_\xi \chi_\eta | \le Ch_X^J$. From this, the previous inequality, and $h_X$ being small, we have that $|A^Y_{\xi,\eta}-A^Y_{loc,\xi,\eta}|\le Ch_X^{J-2}\sum_{\zeta \in Y}w_\zeta= 4\pi Ch_X^{J-2} \sim Ch_X^{J-2}$. 
\end{proof}

\paragraph{\it Distance estimates.} We have already dealt with a bound on $\|A^Y - \widetilde A^Y \|_2$ in Proposition~\ref{sparse_approx_prop}. We are really only interested in the ``chopped'' version of $A^Y_{loc}$ -- \emph{i.e.},  $\widetilde A^Y_{loc}$, which is defined analogously to $\widetilde A^Y$ in \eqref{truncated_A}. 

\begin{proposition} \label{chopped_discrete2chopped_prop}
For $h_X$ sufficiently small and $J>2$, we have that
\begin{equation}\label{chopped_discrete2chopped_loc}
\| \widetilde A^Y_{loc} - \widetilde A^Y \|_2 <C K^2(\log(h_X))^2h_X^{J-2}.
\end{equation}
\end{proposition}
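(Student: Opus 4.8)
The plan is to follow the template set by the proofs of Theorem~\ref{error_l2_norm_A} and Proposition~\ref{sparse_approx_prop}: reduce the spectral norm to a maximal row sum, and then combine the uniform entrywise estimate \eqref{quad_error_exact_lag_loc_lag} with the sparsity count \eqref{row_cardinality}. First I would observe that both $\widetilde A^Y_{loc}$ and $\widetilde A^Y$ are symmetric, since the chopping in \eqref{truncated_A} (and its analogue for $\widetilde A^Y_{loc}$) zeroes out entries according to the symmetric condition $\dist(\xi,\eta)>Kh_X|\log h_X|$, while the underlying entries $A^Y_{\xi,\eta}$ and $A^Y_{loc,\xi,\eta}$ are themselves symmetric in $\xi,\eta$. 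Hence the difference $E:=\widetilde A^Y_{loc}-\widetilde A^Y$ is symmetric, and
\[
\|E\|_2 \le \|E\|_1 = \|E\|_\infty = \max_{\eta\in X}\sum_{\xi\in X}\big|\widetilde A^Y_{loc,\xi,\eta}-\widetilde A^Y_{\xi,\eta}\big|.
\]

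Next I would note that the difference is supported on the retained band $\dist(\xi,\eta)\le Kh_X|\log h_X|$. Outside this band both chopped entries vanish by definition, so their difference is zero; inside it both chopped entries coincide with their unchopped counterparts, so $\widetilde A^Y_{loc,\xi,\eta}-\widetilde A^Y_{\xi,\eta}=A^Y_{loc,\xi,\eta}-A^Y_{\xi,\eta}$. Each such surviving term is therefore controlled by the uniform per-entry bound \eqref{quad_error_exact_lag_loc_lag}, namely $\big|A^Y_{\xi,\eta}-A^Y_{loc,\xi,\eta}\big|\le Ch_X^{J-2}$, which holds uniformly in $\xi,\eta\in X$ and in the fine set $Y$ whenever $J>2$ and $h_X$ is small.

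Finally I would count the surviving terms in a fixed row $\eta$. Since only the centers $\xi$ with $\dist(\xi,\eta)\le Kh_X|\log h_X|$ contribute, this number is exactly the row cardinality already estimated in \eqref{row_cardinality}, which is $\approx K^2(\log(h_X))^2$. Multiplying the number of nonzero entries by the uniform per-entry bound yields
\[
\sum_{\xi\in X}\big|\widetilde A^Y_{loc,\xi,\eta}-\widetilde A^Y_{\xi,\eta}\big| \le CK^2(\log(h_X))^2\,h_X^{J-2},
\]
uniformly in $\eta$, and substituting this into the row-sum bound for $\|E\|_2$ gives \eqref{chopped_discrete2chopped_loc}.

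I do not expect a serious obstacle here; the argument is a direct bookkeeping combination of a per-entry estimate with a sparsity count, exactly parallel to Proposition~\ref{sparse_approx_prop}. The one point deserving care is the support observation in the second paragraph: it is precisely because $E$ is supported on the retained band that the crude \emph{uniform} entrywise bound \eqref{quad_error_exact_lag_loc_lag}---rather than the finer exponential-decay estimates used in Theorem~\ref{error_l2_norm_A}---is adequate, once it is weighted by the $\calo((\log h_X)^2)$ number of nonzero entries per row.
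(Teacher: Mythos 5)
Your proposal is correct and follows essentially the same route as the paper's own proof: both reduce $\| \widetilde A^Y_{loc} - \widetilde A^Y \|_2$ to a maximal row sum via symmetry, observe that the difference is supported on the retained band $\dist(\xi,\eta)\le Kh_X|\log h_X|$ where the chopped entries agree with the unchopped ones, and then multiply the uniform entrywise bound \eqref{quad_error_exact_lag_loc_lag} by the row-cardinality count \eqref{row_cardinality}. There is no gap; your support observation in the second paragraph is exactly the step the paper uses.
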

\begin{proof}
As before, we have $\| \widetilde A^Y_{loc} - \widetilde A^Y \|_2 \le  \max_{\eta\in X} \big(\textstyle{\sum_{\xi\in X}} |\widetilde A^Y_{\xi,\eta} - \widetilde A^Y_{loc, \xi,\eta}|\big)$. For fixed $\eta$, all terms with $d(\xi,\eta) > Kh_x|\log(h_X)|$ are $0$. It follows that 
\[
\textstyle{\sum_{\xi\in X}} |\widetilde A^Y_{\xi,\eta} - \widetilde A^Y_{loc, \xi,\eta}| = \textstyle{\sum_{\xi\in B(\eta,r_h)\cap X}}|A^Y_{\xi,\eta} -A^Y_{loc, \xi,\eta}|,\ r_h=Kh_X|\log(h_X)|.
\]
From \eqref{quad_error_exact_lag_loc_lag}, the difference in the right sum is uniformly bounded by $Ch^{J-2}$. Consequently, applying \eqref{row_cardinality} then yields
\[
\| \widetilde A^Y_{loc} - \widetilde A^Y \|_2 \le \max_{\eta\in X}\big(\textstyle{\sum_{\xi\in X}} |\widetilde A^Y_{\xi,\eta} - \widetilde A^Y_{loc, \xi,\eta}|\big) \le Ch^{J-2}\max_{\eta\in X} \#\{\text{row}\  \eta\} <C K^2(\log(h_X))^2h_X^{J-2},
\]
which is \eqref{chopped_discrete2chopped_loc}.
\end{proof}

\begin{corollary}\label{dist_A_A_chop_loc} Assume that the hypotheses of Theorem~\ref{error_l2_norm_A},   Proposition~\ref{sparse_approx_prop} and Proposition~\ref{chopped_discrete2chopped_prop} hold. Then,
\[
\left\{
\begin{gathered}
\| A-\widetilde A^Y  \|_2 \le  C\big((\log(h_Y))^2 (h_Y/h_X)^{2M} h_X^{-\delta}+|\log(h_X)|h_X^{K\nu-2}\big),\\
\| A-\widetilde A^Y_{loc}  \|_2 \le  C\big((\log(h_Y))^2 (h_Y/h_X)^{2M} h_X^{-\delta}+(\log(h_X))^2h_X^{J-2}\big).
\end{gathered}\right.
\]
\end{corollary}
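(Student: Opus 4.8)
The plan is to derive both bounds by combining the triangle inequality for the spectral norm with the three estimates already established for the relevant matrix differences. The final corollary is purely a bookkeeping step: it assembles the ingredients rather than proving anything genuinely new, so there is no serious obstacle to overcome. The only mild care needed is in choosing which intermediate matrices to interpose and then confirming that each resulting term is dominated by one of the two summands in the stated right-hand sides.

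For the first inequality, I would write $A - \widetilde A^Y = (A - A^Y) + (A^Y - \widetilde A^Y)$ and apply the triangle inequality in the $\|\cdot\|_2$ norm. Theorem~\ref{error_l2_norm_A} bounds the first piece by $C(\log(h_Y))^2 (h_Y/h_X)^{2M} h_X^{-\delta}$, and Proposition~\ref{sparse_approx_prop} bounds the second by $\frac{2CKe^{-\nu}}{(1-e^{-\nu})^2}h_X^{K\nu - 2}|\log(h_X)|$. Absorbing the constants depending on $K$ and $\nu$ into a single $C$ gives exactly the first displayed bound, with the $|\log(h_X)|h_X^{K\nu-2}$ term coming directly from the sparsification step.

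For the second inequality, I would split as $A - \widetilde A^Y_{loc} = (A - \widetilde A^Y) + (\widetilde A^Y - \widetilde A^Y_{loc})$ and again use the triangle inequality. The first term is controlled by the bound just derived; however, to match the stated right-hand side one should note that the first summand there is the \emph{same} quadrature term $(\log(h_Y))^2 (h_Y/h_X)^{2M} h_X^{-\delta}$, so I would keep only the quadrature contribution from $\|A-\widetilde A^Y\|_2$ and recognize that its sparsification piece $|\log(h_X)|h_X^{K\nu-2}$ is dominated by the second summand $(\log(h_X))^2 h_X^{J-2}$ for the relevant range of $K$ (since $K\nu - 2 \ge J - 2$ when $K\nu \ge J = K\nu - 4m + 2 - 2\mu$, which holds because $4m-2+2\mu>0$). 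The term $\|\widetilde A^Y - \widetilde A^Y_{loc}\|_2$ is bounded by Proposition~\ref{chopped_discrete2chopped_prop} as $CK^2(\log(h_X))^2 h_X^{J-2}$, which supplies precisely the second summand. Collecting constants then yields the second displayed bound.

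The one point requiring genuine (though routine) attention is the comparison between the two small-scale remainder terms $h_X^{K\nu-2}$ and $h_X^{J-2}$: since $J<K\nu$, the local-Lagrange error $h_X^{J-2}$ is the larger of the two as $h_X\to 0$, so the sparsification error from the global basis is harmlessly absorbed, and the local-basis bound is genuinely governed by $h_X^{J-2}$. I expect this tidying of exponents to be the only place where a careless application of the triangle inequality could produce a weaker-than-claimed bound, and so it is the step I would state explicitly rather than leave implicit.
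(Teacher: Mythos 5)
Your proposal is correct and follows essentially the same route as the paper: a triangle-inequality decomposition invoking the three matrix-distance estimates, followed by absorbing the sparsification term $|\log(h_X)|h_X^{K\nu-2}$ into $(\log(h_X))^2 h_X^{J-2}$ via the exponent comparison $K\nu-2 > J-2$ (equivalently $K\nu - J = 4m-2+2\mu > 0$), which is exactly the ``additional fact'' the paper cites. The only cosmetic difference is that you interpose $\widetilde A^Y$ and reuse the first bound, while the paper phrases the second estimate as a direct three-term splitting; these are the same argument.
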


\begin{proof}
The first result follows from first applying the triangle inequality and the three distance estimates from Theorem~\ref{error_l2_norm_A},   Proposition~\ref{sparse_approx_prop} and Proposition~\ref{chopped_discrete2chopped_prop} to $\| A-\widetilde A^Y \|_2$. Establishing the second is done is a similar way, employing the additional fact that $h_X^{K\nu - 2}|\log(h_X)| <(\log(h_X))^2h_X^{J-2}$, since $K\nu-2>J-2$.
\end{proof}

\paragraph{\it Stability.}The matrices $\widetilde A^Y$ and $\widetilde A^Y_{loc}$ both have roughly the same stability properties as $A$ and $A^Y$. We will establish them in Theorem~\ref{stability_chop_loc_lag} below. To do this, we will need the following elementary result from linear algebra, which we state without proof. 

\begin{lemma}\label{conditioning_approx_A}
Let $S$ and $T$ be Hermitian $n\times n$ matrices and let $S$ be positive definite. If there exists $0\le \varepsilon <1$ such that $\|S-T\|_2 \le \varepsilon \lambda_{\min}(S)$, then $T$ is positive definite,  and, in addition,  these hold: 
\[
\begin{gathered}
(1-\varepsilon)\lambda_{\min}(S) \le \lambda_{min}(T) \le (1+\varepsilon)\lambda_{min}(S), \\
\frac{1-\varepsilon}{1+\varepsilon}\kappa_2(S) \le \kappa_2(T)\le \frac{1+\varepsilon}{1-\varepsilon}\kappa_2(S).
\end{gathered}
\]
\end{lemma}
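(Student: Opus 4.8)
The plan is to reduce the entire statement to Weyl's perturbation inequality for the eigenvalues of Hermitian matrices. Writing $T = S + (T-S)$, where both $S$ and $T-S$ are Hermitian, Weyl's inequality asserts that when the eigenvalues of each matrix are listed in increasing order, $|\lambda_k(T) - \lambda_k(S)| \le \|T-S\|_2$ holds for every index $k$. (If one prefers not to cite Weyl explicitly, the same two-sided bound drops out immediately from the Courant--Fischer min-max characterization of $\lambda_k$, since $\|T-S\|_2$ bounds $|\bfv^\ast(T-S)\bfv|$ uniformly over unit vectors $\bfv$.) I would take this as the single analytic input and then specialize it to the indices corresponding to the smallest and the largest eigenvalues.

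First I would apply the inequality at the smallest eigenvalue. Combining $|\lambda_{\min}(T)-\lambda_{\min}(S)| \le \|T-S\|_2$ with the hypothesis $\|S-T\|_2 \le \varepsilon\lambda_{\min}(S)$ gives
\[
(1-\varepsilon)\lambda_{\min}(S) \le \lambda_{\min}(T) \le (1+\varepsilon)\lambda_{\min}(S),
\]
which is precisely the first displayed bound. Because $S$ is positive definite we have $\lambda_{\min}(S)>0$, and because $\varepsilon<1$ the left-hand side is strictly positive; hence $\lambda_{\min}(T)>0$ and $T$ is positive definite. This positive-definiteness is exactly what legitimizes writing $\kappa_2 = \lambda_{\max}/\lambda_{\min}$ for each matrix in the final step.

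Next I would treat the largest eigenvalue in the same manner. The only additional observation is the trivial one that $\varepsilon\lambda_{\min}(S) \le \varepsilon\lambda_{\max}(S)$, so the hypothesis also yields $\|S-T\|_2 \le \varepsilon\lambda_{\max}(S)$ and therefore
\[
(1-\varepsilon)\lambda_{\max}(S) \le \lambda_{\max}(T) \le (1+\varepsilon)\lambda_{\max}(S).
\]
Dividing the two chains --- using $\lambda_{\max}(T) \le (1+\varepsilon)\lambda_{\max}(S)$ together with $\lambda_{\min}(T) \ge (1-\varepsilon)\lambda_{\min}(S)$ for the upper estimate, and the reversed pairing for the lower estimate --- produces
\[
\frac{1-\varepsilon}{1+\varepsilon}\,\kappa_2(S) \le \kappa_2(T) \le \frac{1+\varepsilon}{1-\varepsilon}\,\kappa_2(S),
\]
which completes the argument.

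The result is genuinely elementary, so there is no substantive obstacle; the points requiring care are purely bookkeeping ones. I would make sure that the index $k$ refers to the same position in the ordering for $S$ and for $T$ when invoking Weyl's inequality, and I would confirm that $T$ is positive definite \emph{before} passing to the quotient $\lambda_{\max}(T)/\lambda_{\min}(T)$, so that the condition number is well defined and the division of inequalities preserves their direction. Everything else is a direct substitution of the hypothesis $\|S-T\|_2 \le \varepsilon\lambda_{\min}(S)$.
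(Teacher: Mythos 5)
Your proof is correct. The paper in fact states this lemma \emph{without} proof (it is introduced as an ``elementary result from linear algebra, which we state without proof''), so there is no argument of the authors to compare against; your reduction to Weyl's perturbation inequality $|\lambda_k(T)-\lambda_k(S)|\le \|T-S\|_2$, applied at the extreme eigenvalues and followed by division of the two-sided bounds, is exactly the standard argument the authors presumably had in mind, and you handle the two points that need care --- establishing $\lambda_{\min}(T)\ge(1-\varepsilon)\lambda_{\min}(S)>0$ before forming $\kappa_2(T)$, and pairing the inequalities in the right directions when dividing --- correctly.
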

 
\begin{theorem}\label{stability_chop_loc_lag} 
Suppose that the conditions of Theorem~\ref{norm_inv_A_Y}  and Corollary~\ref{conditioning_A_Y} are satisfied. If $J = K\nu - 4m + 2 - 2\mu >4$, then both $\widetilde A^Y$ and $\widetilde A^Y_{loc}$ are positive definite, have $\lambda_{\min}(\widetilde A^Y)\sim \lambda_{min}(\widetilde A^Y_{loc})\sim q_X^2$, and also have $\kappa_2(\widetilde A^Y)\sim \kappa_2(\widetilde A^Y_{loc})\sim q_X^{-2}$.
\end{theorem}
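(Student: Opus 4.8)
The plan is to regard both $\widetilde A^Y$ and $\widetilde A^Y_{loc}$ as small Hermitian perturbations of the discretized stiffness matrix $A^Y$, and to feed the resulting norm bounds into the elementary perturbation estimate of Lemma~\ref{conditioning_approx_A} with $S=A^Y$. Under the hypotheses of Theorem~\ref{norm_inv_A_Y} the matrix $A^Y$ is Hermitian and positive definite with $\lambda_{\min}(A^Y)\ge Cq_X^2$, and under the hypotheses of Corollary~\ref{conditioning_A_Y} one has $\lambda_{\max}(A^Y)\le C$, hence $\kappa_2(A^Y)\le Cq_X^{-2}$. The matrices $\widetilde A^Y$ and $\widetilde A^Y_{loc}$ are symmetric truncations and so are Hermitian; thus the only thing to check before invoking the lemma is that each perturbation is bounded by $\varepsilon\,\lambda_{\min}(A^Y)$ for some fixed $\varepsilon<1$, which I will arrange by taking $h_X$ small. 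Since $X$ is quasi-uniform, $q_X\sim h_X$, so throughout I will compare the perturbations against $\lambda_{\min}(A^Y)\sim h_X^2$.

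First I would dispose of $\widetilde A^Y$. Proposition~\ref{sparse_approx_prop} gives $\|A^Y-\widetilde A^Y\|_2\le C\,h_X^{K\nu-2}|\log(h_X)|$, so the ratio $\|A^Y-\widetilde A^Y\|_2/\lambda_{\min}(A^Y)$ is of order $h_X^{K\nu-4}|\log(h_X)|$. The identity $J=K\nu-4m+2-2\mu$ together with $J>4$, $m\ge 2$ and $\mu\ge 0$ forces $K\nu>4$, so this ratio tends to $0$ as $h_X\to0$ and is $\le\varepsilon<1$ for $h_X$ small. Lemma~\ref{conditioning_approx_A} then shows $\widetilde A^Y$ is positive definite with $\lambda_{\min}(\widetilde A^Y)\sim\lambda_{\min}(A^Y)\sim q_X^2$ and $\kappa_2(\widetilde A^Y)\sim\kappa_2(A^Y)\sim q_X^{-2}$.

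Next I would treat $\widetilde A^Y_{loc}$ through the triangle inequality, $\|A^Y-\widetilde A^Y_{loc}\|_2\le\|A^Y-\widetilde A^Y\|_2+\|\widetilde A^Y-\widetilde A^Y_{loc}\|_2$, controlling the second term by Proposition~\ref{chopped_discrete2chopped_prop}, which contributes $CK^2(\log(h_X))^2 h_X^{J-2}$. Dividing by $\lambda_{\min}(A^Y)\sim h_X^2$ produces a term of order $(\log(h_X))^2 h_X^{J-4}$. This is precisely where the hypothesis $J>4$ enters: it guarantees $(\log(h_X))^2 h_X^{J-4}\to0$, so the full perturbation $\|A^Y-\widetilde A^Y_{loc}\|_2$ is again $\le\varepsilon\,\lambda_{\min}(A^Y)$ for small $h_X$, and Lemma~\ref{conditioning_approx_A} delivers the asserted bounds for $\widetilde A^Y_{loc}$.

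The only genuine obstacle is the bookkeeping of competing powers of $h_X$: because $\lambda_{\min}(A^Y)$ is itself small, of order $q_X^2$, one must verify that each perturbation is $o(q_X^2)$ rather than merely $o(1)$. The chopping error for $\widetilde A^Y$ is harmless for essentially any admissible $K$, but the local-Lagrange correction decays only like $h_X^{J-2}$, so the margin $J>4$ is sharp for this argument — it is exactly the condition making that correction smaller than $\lambda_{\min}(A^Y)$. Finally, the two-sided conclusions $\lambda_{\min}\sim q_X^2$ and $\kappa_2\sim q_X^{-2}$ are inherited from the two-sided comparison $\frac{1-\varepsilon}{1+\varepsilon}\kappa_2(A^Y)\le\kappa_2(T)\le\frac{1+\varepsilon}{1-\varepsilon}\kappa_2(A^Y)$ of Lemma~\ref{conditioning_approx_A}, using that the eigenvalue asymptotics $\lambda_{\min}(A^Y)\sim q_X^2$ and $\lambda_{\max}(A^Y)\sim1$ hold; the matching lower bounds not made explicit in Corollary~\ref{conditioning_A_Y} follow from Rayleigh-quotient test-function estimates based on \eqref{p_stability}, exactly as in the treatment of $A$ in Section~\ref{A_lagrange_basis}.
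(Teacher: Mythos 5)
Your proposal is correct and follows essentially the same route as the paper: both bound the truncation errors via Propositions~\ref{sparse_approx_prop} and \ref{chopped_discrete2chopped_prop}, compare them against $\lambda_{\min}(A^Y)\gtrsim q_X^2$ from Theorem~\ref{norm_inv_A_Y} (using $K\nu>4$ and $J>4$ to make the perturbation ratios small), and conclude with the Hermitian perturbation Lemma~\ref{conditioning_approx_A}. The only immaterial difference is that the paper applies the lemma twice in a chain ($S=A^Y$, $T=\widetilde A^Y$, then $S=\widetilde A^Y$, $T=\widetilde A^Y_{loc}$), whereas you compare $\widetilde A^Y_{loc}$ directly to $A^Y$ via the triangle inequality.
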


\begin{proof} Note that $J>4$ implies that $K\nu > 4$, so that, as long as $q_X$ is small, \eqref{sparse_approx_est} holds, and so, using $h_X=\rho_Xq_X$ and applying Theorem~\ref{norm_inv_A_Y}, we have 
\[
\| A^Y - \widetilde A^Y\|_2 \le Cq_X^{K\nu - 4}|\log(q_X)|q_X^2 \le \underbrace{Cq_X^{K\nu - 4}|\log(q_X)|}_{\varepsilon_1} \lambda_{min}(A^Y) = \varepsilon_1 \lambda_{min}(A^Y).
\]
Since $K\nu - 4>0$, we may choose $q_X$ so small that $\varepsilon_1<1$. Lemma~\ref{conditioning_approx_A} then implies the results stated for $\widetilde A^Y$. Using the this result and \eqref{chopped_discrete2chopped_loc}, we get
\[
\| \widetilde A^Y - \widetilde A^Y_{loc}  \|_2 \le \underbrace{Cq_X^{J-4}(\log(q_X))^2}_{\varepsilon_2} \lambda_{\min}(\widetilde A^Y) = \varepsilon_2 \lambda_{\min}(\widehat A^Y),
\]
Because $J>4$, we may choose $q_X$ sufficiently small so that $\varepsilon_2<1$. Applying Lemma~\ref{conditioning_approx_A} then yields the result for $\widetilde A^Y_{loc}$.
\end{proof}

\paragraph{\it Sparse and sparse local Lagrange Galerkin error estimates.} We conclude by giving errors for the $L^2$-Galerkin approximations to $u$, $\tilde u^Y_h$ and $\tilde u^Y_{loc,h}$, which are obtained by discretizing with the chopped matrices $\widetilde A^Y$ and $\widetilde A^Y_{loc}$. The estimates below are gotten in very nearly the same way as the one in Theorem~\ref{discrete_galerkin_error}. The only change is that $ \|(A^Y)^{-1}\| \|A^Y-A\|=\lambda_{min}(A_Y)^{-1}\|A^Y-A\|$ gets replaced by $\lambda_{\min}(\widehat A^Y)^{-1}\| A - \widetilde A^Y\|_2\sim q_X^{-2}\| A - \widetilde A^Y\|_2$ in the first instance, and by $\lambda_{\min}(\widehat A^Y_{loc})^{-1} \| A - \widetilde A^Y_{loc}\|_2\sim q_X^{-2}\| A - \widetilde A^Y_{loc}\|_2$ in the second. 

\begin{theorem}\label{discrete_galerkin_error_sparse_loc}
If the conditions of Theorem~\ref{norm_inv_A_Y}  and Corollary~\ref{conditioning_A_Y} are satisfied, then
\begin{align}
\label{discrete_galerkin_bnd_sparse}
\|u-\tilde u^Y_h\|_{L^2}\le & C\bigg(h_X^{s+2}+\frac{(\log(h_Y) h_Y^{M})^2}{h_X^{2M+2+\delta}} +|\log(h_X)|h_X^{K\nu-4}\bigg) \|f\|_{H_s}, \ K\nu>4,\\
\|u-\tilde u^Y_{loc,h}\|_{L^2}\le  & C\bigg(h_X^{s+2}+\frac{(\log(h_Y) h_Y^{M})^2}{h_X^{2M+2+\delta}} + (\log(h_X))^2h_X^{J-4}\bigg) \|f\|_{H_s}, \ J>4.\label{discrete_galerkin_bnd_sparse_loc}
\end{align}
\end{theorem}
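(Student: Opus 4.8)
The plan is to mimic the derivation of Theorem~\ref{discrete_galerkin_error} essentially verbatim, substituting the chopped matrices $\widetilde A^Y$ and $\widetilde A^Y_{loc}$ for $A^Y$ at the appropriate places. As in the earlier argument, I would split $\|u-\tilde u^Y_h\|_{L^2} \le \|u-u_h\|_{L^2}+\|u_h-\tilde u^Y_h\|_{L^2}$ by the triangle inequality. The first term is the continuous Galerkin error, bounded by $Ch_X^{s+2}\|f\|_{H_s}$ via \eqref{continuous_galerkin_bnd}, and is unaffected by the discretization. Everything new happens in the second term.

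For the second term, I would write $\tilde u^Y_h = \sum_\xi \tilde\alpha_\xi \chi_\xi$ with $\tilde\alpha = (\widetilde A^Y)^{-1}f|_X$, and use the $L^p$-stability \eqref{p_stability} with $p=2$ together with the identity $A^{-1}-(\widetilde A^Y)^{-1} = (\widetilde A^Y)^{-1}(\widetilde A^Y - A)A^{-1}$, exactly as in the lead-up to Theorem~\ref{discrete_galerkin_error}. This gives
\[
\|u_h-\tilde u^Y_h\|_{L^2} \le c_2 q_X \|(\widetilde A^Y)^{-1}\|_2 \, \|A - \widetilde A^Y\|_2 \, \|A^{-1}f|_X\|_{\ell^2}.
\]
The key substitution is that $\|(\widetilde A^Y)^{-1}\|_2 = \lambda_{\min}(\widetilde A^Y)^{-1}$, which Theorem~\ref{stability_chop_loc_lag} (valid since $J>4\Rightarrow K\nu>4$) bounds by $Cq_X^{-2}$; this plays the role that Theorem~\ref{norm_inv_A_Y} played for $A^Y$. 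Combining with $\|A^{-1}f|_X\|_{\ell^2} \le c_1^{-1}q_X^{-1}\|u_h\|_{L^2}$ and the bound $\|u_h\|_{L^2}\le C\|f\|_{H_s}$ (from Proposition~\ref{a_coercive} and \eqref{continuous_galerkin_bnd}), I obtain $\|u_h-\tilde u^Y_h\|_{L^2}\le Cq_X^{-2}\|A-\widetilde A^Y\|_2\|f\|_{H_s}$, mirroring \eqref{discrete_galerkin_sum_error}.

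The final step is to insert the distance estimate for $\|A-\widetilde A^Y\|_2$ from the first line of Corollary~\ref{dist_A_A_chop_loc}, namely $C\big((\log(h_Y))^2 (h_Y/h_X)^{2M} h_X^{-\delta}+|\log(h_X)|h_X^{K\nu-2}\big)$. Multiplying by $q_X^{-2}\sim h_X^{-2}$ (using $h_X\sim q_X$) converts the exponent $-\delta$ into $-\delta-2$ in the middle term and $K\nu-2$ into $K\nu-4$ in the last term, producing exactly \eqref{discrete_galerkin_bnd_sparse}. The argument for $\tilde u^Y_{loc,h}$ is identical, using instead the second line of Corollary~\ref{dist_A_A_chop_loc} for $\|A-\widetilde A^Y_{loc}\|_2$ and the stability of $\widetilde A^Y_{loc}$ from Theorem~\ref{stability_chop_loc_lag}; the $q_X^{-2}$ factor turns $h_X^{J-2}$ into $h_X^{J-4}$, giving \eqref{discrete_galerkin_bnd_sparse_loc}.

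I do not anticipate any genuine obstacle here, since all the hard analytic work has already been done: the stability bounds $\lambda_{\min}(\widetilde A^Y),\lambda_{\min}(\widetilde A^Y_{loc})\sim q_X^2$ are supplied by Theorem~\ref{stability_chop_loc_lag}, and the matrix-difference estimates are supplied by Corollary~\ref{dist_A_A_chop_loc}. The only point requiring care is bookkeeping the factor $q_X^{-2}\sim h_X^{-2}$ consistently, making sure each error term picks up the correct shift in its exponent, and verifying that the hypotheses $K\nu>4$ and $J>4$ (needed for the stability theorem) are precisely the side conditions stated in \eqref{discrete_galerkin_bnd_sparse} and \eqref{discrete_galerkin_bnd_sparse_loc}. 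Since the proof is a near-verbatim repetition of Theorem~\ref{discrete_galerkin_error} with the two substitutions indicated in the paragraph preceding the statement, I would present it compactly rather than reproducing every line.
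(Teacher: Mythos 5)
Your proposal is correct and follows essentially the same route as the paper: the paper's own justification is precisely to rerun the derivation of Theorem~\ref{discrete_galerkin_error}, replacing $\lambda_{\min}(A^Y)^{-1}\|A^Y-A\|_2$ by $\lambda_{\min}(\widetilde A^Y)^{-1}\|A-\widetilde A^Y\|_2 \sim q_X^{-2}\|A-\widetilde A^Y\|_2$ (and likewise for $\widetilde A^Y_{loc}$), with the stability supplied by Theorem~\ref{stability_chop_loc_lag} and the matrix-difference bounds by Corollary~\ref{dist_A_A_chop_loc}, exactly as you describe. Your exponent bookkeeping ($h_X^{-\delta}\to h_X^{-\delta-2}$, $K\nu-2\to K\nu-4$, $J-2\to J-4$ from the $q_X^{-2}\sim h_X^{-2}$ factor) matches the stated bounds.
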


\section{Implementation and Numerical Experiments}
\label{numerics}
This section discusses the practical aspects of implementation and present numerical experiments. We discuss the construction of point sets for the discrete approximation space and quadrature points, the assembly of the stiffness matrix, and the assembly of the right hand side vector. 

The numerical scheme requires two sets of points on the sphere. The coarse set $X$ is used to build a basis for the approximation space $V_{\phi_m,X}$ discussed in section~\ref{small_footprint_bases}. This space is spanned by the Lagrange functions $\{\chi_\xi\}_{\xi\in X}$ defined in \eqref{chi_xi_expan}, which have this form:
\[
\begin{aligned}
\chi_\xi(x) = \sum_{\eta \in X} \alpha_{\eta,\xi} \phi_m(x\cdot \eta) + \sum_{\ell=0}^{m-1} \sum_{k=1}^{2\ell+1}\beta_{l,k,\xi} Y_{\ell,k}(x), \\
\sum_{\eta} \alpha_{\eta,\xi}Y_{\ell,k}(\eta) = 0, \quad 0\le \ell \le m-1, \ 1\le k  \le 2\ell+1.
\end{aligned}
\] 
The $\alpha_{\xi,\eta}$ and $\beta_{\ell,k,\xi}$ coefficients must be computed for each $\xi \in X$. They are determined by $\chi_\xi(\zeta)=\delta_{\xi,\zeta}$ and the second set of equations above. Solving for them is not very difficult, even though each Lagrange function requires all of the points in $X$ for its computation. However, for large data sets, there is a very efficient, parallelizable way to numerically compute local Lagrange functions. These functions, which were introduced in \cite{FHNWW2012}, require relatively few nodes from $X$ and give very good approximations for the $\chi_\xi$'s.

The second, finer set $Y$ is used, in the quadrature formulas, to discretize entries in the stiffness matrix \eqref{discretized_entry}. By Theorem~\ref{error_l2_norm_A}, the mesh norm of the set $Y$ determines the error in the discrete stiffness matrix and should be chosen appropriately to obtain a desired accuracy in numerically approximating it.   The quadrature weights $\set{w_\zeta}_{\zeta \in Y}$ satisfy
\[
\begin{aligned}
\int_{\mathbb{S}^2} \tilde{\chi}_\zeta(x) \, d\mu(x) = w_\zeta,
\end{aligned}
\]
where $\tilde{\chi}_\zeta$ is the Lagrange function centered at $\zeta \in Y$ for the kernel $\phi_m$. The kernel $\phi_m$ need not be the same kernel as the one used in the construction of the approximation space $V_X$. The weights can be computed efficiently by solving a single linear system that can be preconditioned by the local Lagrange functions; see \cite{FHNWW2013} for details. Solving the system with Generalized Minimum Residual method (GMRES) and the local Lagrange preconditioner requires few iterations. Experiments performed in \cite{FHNWW2013} demonstrate that the number of iterations required seems to be independent of the number of points in $Y$.   


The stiffness matrix assembly requires computing the quadrature nodes $Y$ and quadrature weights $\set{w_\zeta}_{\zeta \in Y}$ and the coefficients $\set{\alpha_{\eta,\xi}}_{\xi,\eta \in X}$ and $\set{\beta_{\ell,\xi}}_{\xi \in X}$.  We recall the discrete stiffness matrix entries found via quadrature: 
$$
A^Y_{\xi,\eta} = \sum_{\zeta \in Y} \big(a \nabla \chi_\xi \cdot \nabla \chi_\eta  + b \chi_\xi \chi_\eta\big)\big|_\zeta w_\zeta. 
$$
We provide some details of the computation of $\nabla \chi_\xi \cdot \nabla \chi_\eta(\zeta)$. We expand the Lagrange functions in terms of the surface splines $\phi_m$ (denoted $\phi$) as $\chi_\xi(\zeta) = \sum_{\tau \in X} \alpha_{\tau,\xi} \phi(\zeta,\tau) + p_\xi(\zeta)$ and $\chi_\eta(\zeta) = \sum_{\gamma \in X} \alpha_{\gamma,\eta} \phi(\zeta,\gamma) + p_\eta(\zeta)$.   Let $x = \sin(\theta) \cos(\varphi), y = \sin(\theta) \sin(\varphi), z = \cos(\theta)$ where $0 \leq \theta \leq \pi$ and $0 \leq \varphi \leq 2\pi$.  On $\mathbb{S}^2$, $\nabla f = \frac{\partial f}{\partial \theta} \hat{\theta} + \frac{1}{\sin(\theta)} \frac{\partial f}{\partial \varphi} \hat{\varphi}$. Let $\phi'(\zeta,\tau) = (m-1)(1-\zeta \cdot \tau)^{m-2} \log(1-\zeta \cdot \tau) + \zeta \cdot \tau - 1$. Let $\tau = (\tau_x,\tau_y,\tau_z)$ in Cartesian coordinates. We note that $\frac{\partial \phi(\cdot,\tau)}{\partial x}|_\zeta = \phi'(\zeta,\tau) \tau_x$, and 
similarly for the $y$ and $z$ partial derivatives.  Evaluating the covariant derivative of the restricted surface spline in  Cartesian coordinates then yields
\begin{align*}
\nabla \phi(\cdot,\tau)|_\zeta &= \phi'(\zeta,\tau) \bigg((1-\zeta_x^2)\tau_x - \zeta_x \zeta_y \tau_y - \zeta_x \zeta_z \tau_z)\hat{\imath} \\
&+ (-\zeta_x \zeta_y \tau_x +(1-\zeta_y^2)\tau_y -\zeta_x \zeta_y \tau_z)\hat{\jmath} \\
&+ (-\zeta_x \zeta_z\tau_x -\zeta_y \zeta_z \tau_z + (1-\zeta_z^2) \tau_z)\hat{k} \bigg).
\end{align*}
The evaluation of $\nabla \chi_\xi \cdot \nabla \chi_\eta(\zeta)$ then reduces to
\begin{align}
\nabla \chi_\xi(\zeta) \cdot \nabla \chi_\eta(\zeta) = \big( \textstyle{\sum_{\tau}} \alpha_{\tau,\xi} \nabla \phi(\zeta,\tau) + \nabla p_\xi(\zeta)\big)\cdot \big(\textstyle{\sum_{\gamma}} \alpha_{\gamma,\eta} \nabla \phi(\zeta,\gamma) + \nabla p_\eta(\zeta)\big).
\end{align}
 
\subsection{Numerical Experiments}\label{numerical_experiments}
In this section, we discuss numerical results of various experiments that explore the computational properties of the Galerkin method. We consider different differential operators, explore the effects of the quadrature node density on the $L^2$ error of the discrete solution, and compute condition numbers for the discrete stiffness matrix. We also demonstrate that local Lagrange functions, as discussed in \cite{FHNWW2012}, provide a computationally less expensive approximation space and yield comparable error and condition numbers as the approximation space generated by the Lagrange functions. We choose the spherical basis function $\phi_3(t) = (1-t)^2 \log(1-t)$ to construct the approximation space and $\phi_2(t) = (1-t) \log(1-t)$ for the quadrature weights. We use the minimum energy points for the centers $X$ used in the approximation space $V_{\phi_3,X}$. For the quadrature nodes, we use the icosahedral nodes and quasi-minimum energy points. These points are available for download; see \cite{WrightQuadWeights}. For each experiment, the $L^2$ error is computed by evaluating the discrete solution on a set of evaluation points $E$ and applying the Lagrange function quadrature rule. The set $E$ is 62500 quasi-minimum energy points, which is used for each experiment independent of $X$ and $Y$. Let $N_X$ and $N_Y$ denote the number of points in $X$ and $Y$ respectively.  We approximate $h_Y$ by $\frac{1}{\sqrt{N_Y}}$. 

We first consider the problem $-\Delta u + u = f$ with $u = \exp(\cos(\theta))$ and $f = \exp(\cos(\theta))(\cos^2(\theta) + 2z\cos(\theta))$. In the second and third columns of Table \ref{table:Laplacian} we display the relative $L^2$ errors of the discrete solution for two separate experiments.  To obtain the discrete stiffness matrix, we first fixed $961$ centers for $X$ and varied the number of quadrature points used in $Y$  The quadrature points are icosahedral nodes with between $2,562$ points to $92,162$ points. We theoretically expect the $L^2$ error to be $\calo(|\log(h_Y)|^2 h_Y^4)$. In fact, the numerically observed error is  $\calo(|\log(h_Y)|^2 h_Y^{5.2})$. The experiment was repeated with $N_X = 3721$ minimum energy nodes and using the same $Y$. This time, ignoring the $N_Y=2562$ outlier, $|\log(h_Y)| h_Y^{5.5}$ is observed, indicating that improvement in the theoretical errors rates is possible. The Lagrange basis was used for these two sets.

\begin{table}[b]
\center
\begin{tabular}{||c|c|c||c|c|c||}
\hline\hline
 \multicolumn{2}{||c} \text{ $-\Delta u+u=f $}& & \multicolumn{2}{c} \text{$- \mathrm{div}(  \bfa \cdot  \! \nabla u) + u = f $}
 & \\ 
\multicolumn{2}{||c} \text{Lagrange Basis} & &\multicolumn{2}{c}\text{Local Lagrange Basis}&\\ \hline\hline
$N_Y$& $N_X= 961$ & $N_X=3721$ & $N_Y$& $N_X= 961$ & $N_X=3721$ \\ \hline
  2562    & 7.86e-5     &  2.19e-2 & 2500  & 8.00e-5 & 2.10e-2 \\
   10242 & 2.22e-6    & 3.76e-5    &10000 & 2.46e-6 & 3.23e-5\\
  23042 & 3.34e-7      & 3.83e-6&  22500& 3.02e-7  & 4.78e-6\\
  40962  & 8.96e-8     & 9.32e-7 &40000  & 7.80e-8 & 1.04e-6\\
  92162 & 1.50e-8     & 1.27e-7 &90000 & 1.10e-8 & 1.49e-7\\ \hline\hline
\end{tabular}
\vspace{10pt}
\caption{Both $-\Delta u + u = f$ and $-\text{div}(\bfa\cdot\nabla u)+u=f$ were numerically solved using minimum energy point sets for $X$ and icosahedral point sets for $Y$. The $L^2$ error for all cases was $\calo(|\log(h_Y)| ^2h_Y^{5+})$. Here, $h_Y=N_Y^{-1/2}$. For the first equation, a Lagrange basis was used, and, for the second, a \emph{local} Lagrange basis. }
\label{table:Laplacian}
\end{table}

\begin{figure}[t]
\centering
\begin{tabular}{cc}
\includegraphics[width=0.48\textwidth]{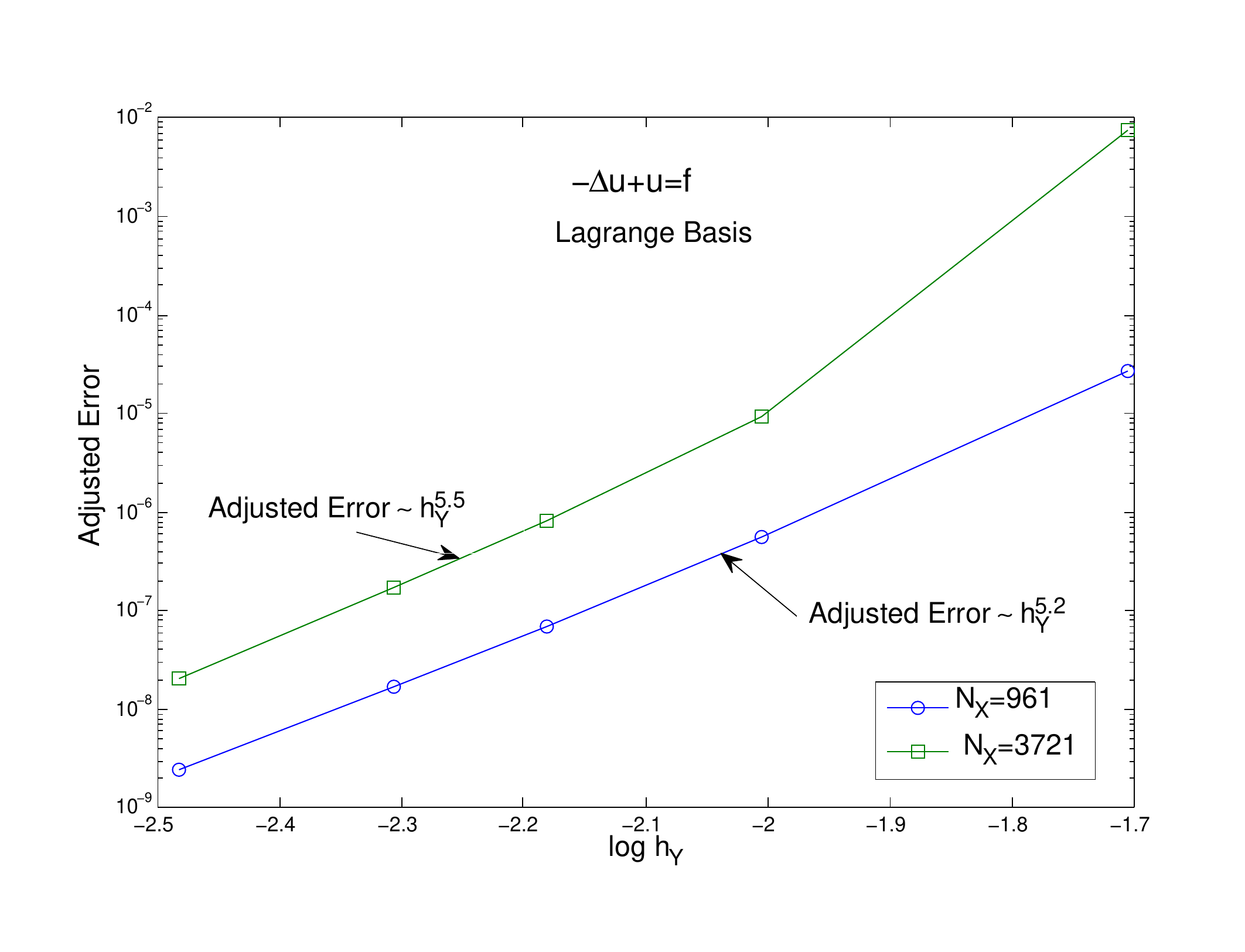} &
\includegraphics[width=0.48\textwidth]{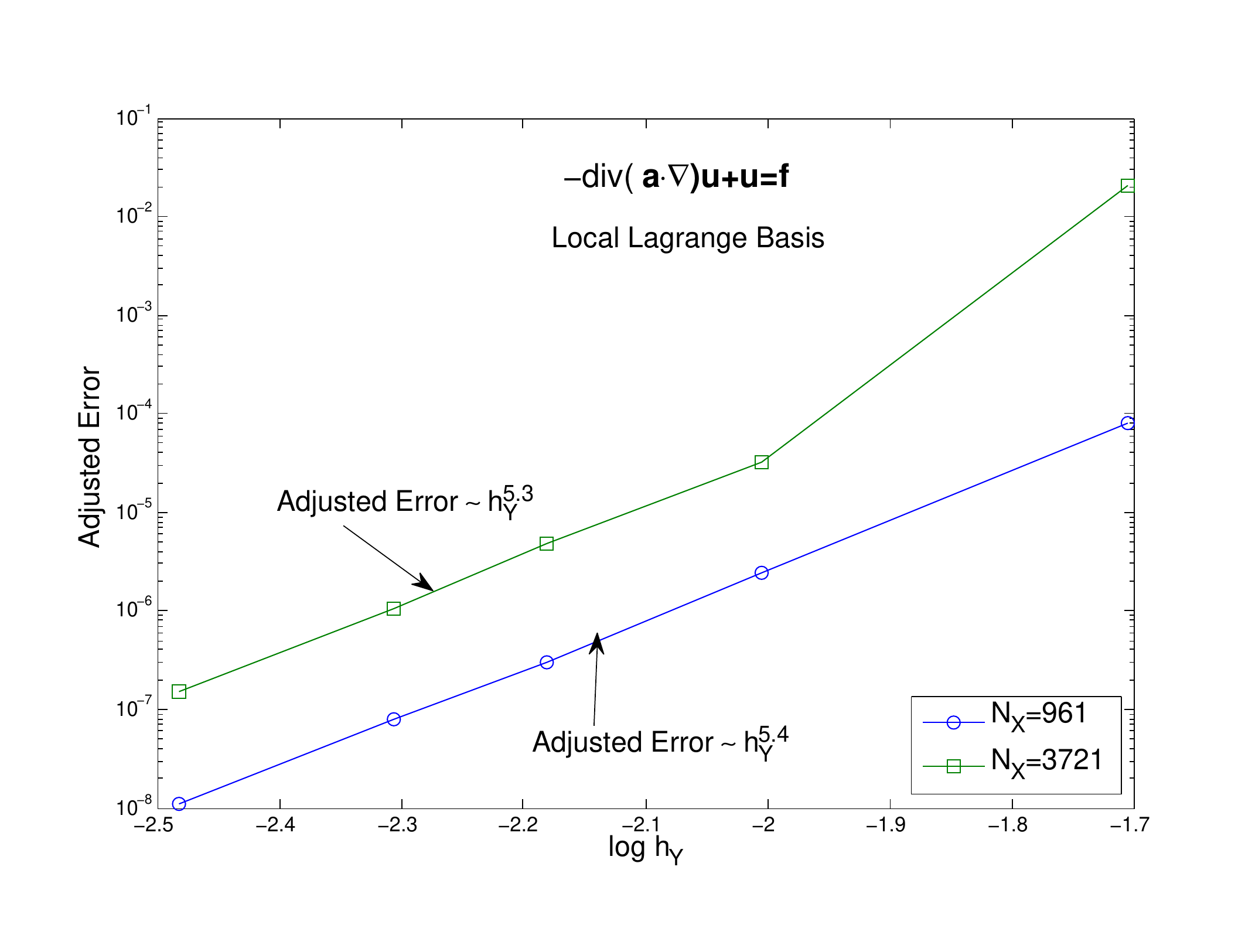} \\
(a) Adjusted $L^2$ error for $-\Delta u+u=f$. & (b) Adjusted $L^2$ error for $-\text{div}(\bfa\cdot \nabla u)+u=f$. \\
\includegraphics[width=0.545\textwidth]{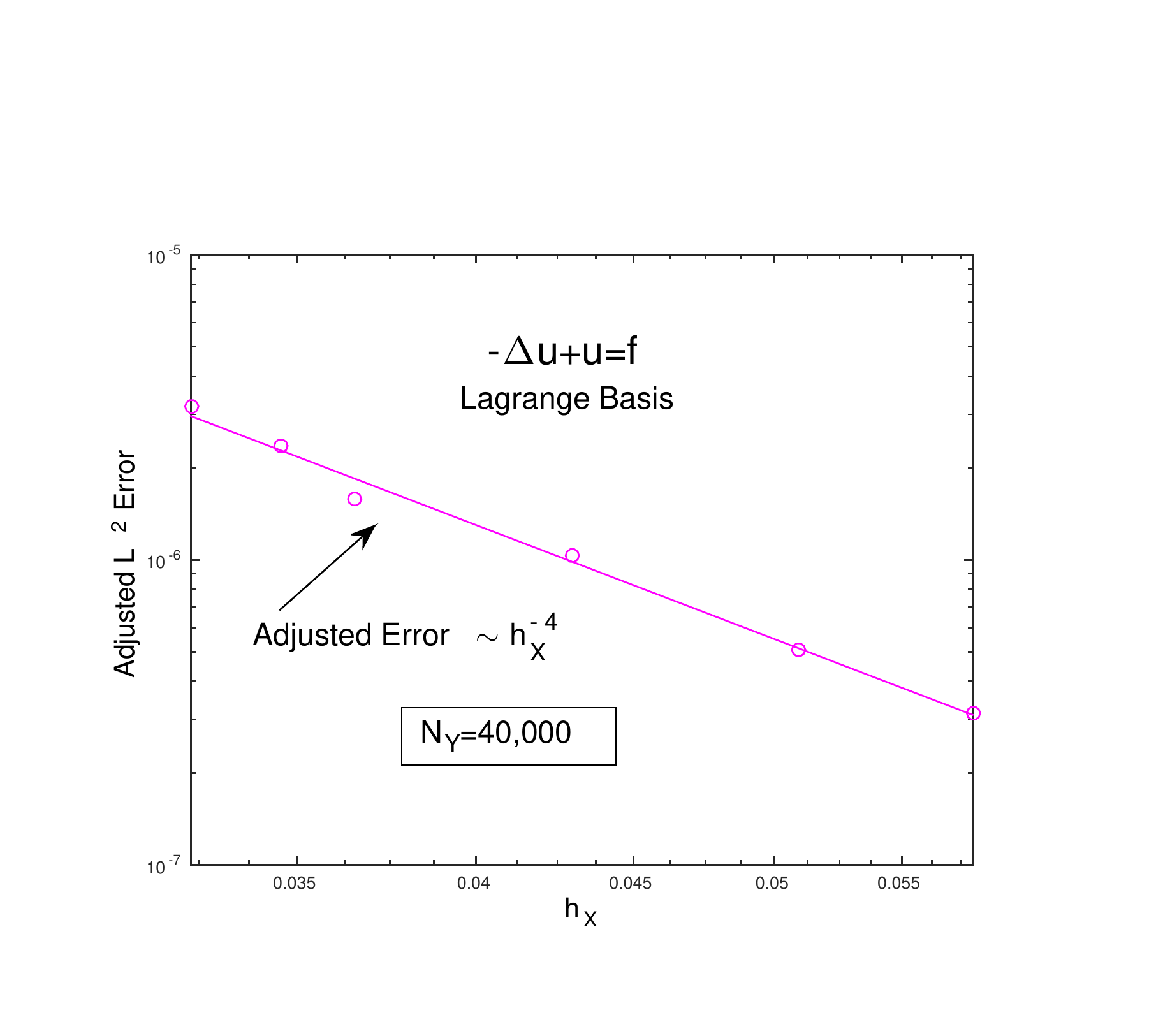} & \includegraphics[width=0.455\textwidth]{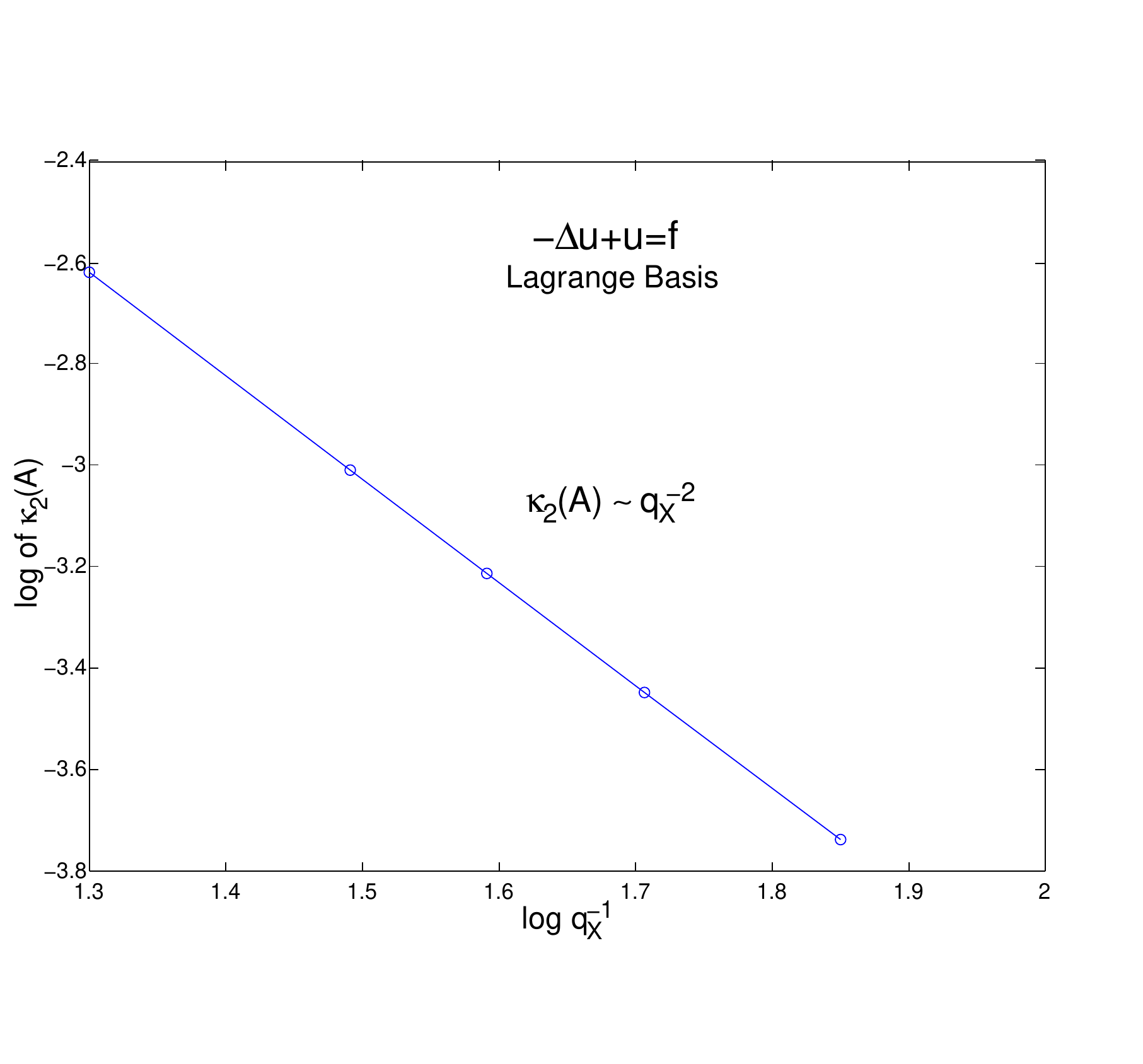}  \\
(c) Adjusted $L^2$ error for $Y$ fixed. &(d) Condition number vs. $q_X^{-1}$. 
\end{tabular}
\caption{In (a) and (b), semi-log plots of the errors (adjusted by removing log factors) for $-\Delta u+u=f$ and  $-\text{div}(\bfa\cdot \nabla u)+u=f$ are shown. The minimum energy points were used for $X$ and icosahedral points were used for $Y$. In (c), a loglog plot of the $L^2$ error vs. $h_X$ is plotted. For this experiment, the number of quadrature points is fixed and the number of centers used for the approximation space varies. In (d), the log of the condition number for the stiffness matrix for $-\Delta u+u=f$ is plotted. }  \label{fig:experiments}
\end{figure}

Next, we treated the problem $ -\text{\rm div}( \mathbf a \! \cdot \! \! \nabla u) + u = f$ for the case in which $\mathbf a = a(\theta,\phi)\mathbf g$, where $\mathbf g$ is the metric tensor for $\sph^2$ and $a(\theta,\phi) = 1-\frac{1}{2} \cos(\theta)$.  We again chose $u = \exp(\cos(\theta))$, which results in the right hand side being 
$
f = \big(-\frac{1}{2}(\cos^3(\theta) +\cos^2(\theta)-5\cos(\theta) +1)+1\big)\exp(\cos(\theta)).
$

We also consider the possibility of using a local Lagrange basis to discretize the PDE. In this case, the approximation space is $V_X = \text{span} \set{\chi_\xi^{loc} \colon\xi \in X}$, where the $\chi_\xi^{loc}$ functions are constructed using only kernels $\phi(\cdot,\eta)$ such that $\text{dist}(\xi,\eta) \leq 7 h_X |\log(h_X)|$. See \cite{FHNWW2012} for a detailed description of the theoretical properties of this basis. The $\chi_\xi^{loc}$'s may be constructed in parallel by solving a small linear system. This reduces computational complexity associated with assembling the $\alpha_{\xi,\eta}$ coefficients. By appropriately tuning the number of kernels used per Lagrange function, the local Lagrange function can be made to satisfy $\|\chi_\xi -\chi_\xi^{loc}\|_{L^{\infty}} \sim h_X^{2m}$, where $m$ is the smoothness of the kernel $\phi$.   For the anisotropic problem, the fifth and sixth columns in Table \ref{table:Laplacian} display the results of the experiment using the local Lagrange bias. For $N_X = 961$, each local Lagrange function is constructed using about $423$ centers   and for $N_X = 3721$, each local Lagrange function is constructed using around $776$ centers, where the number of centers used per kernel is chosen to be all centers with distance at most $7h_X |\log(h_X)|$ from the center. The computed $L^2$ errors from using the local basis versus the full basis are negligible,  confirming the results in section~\ref{Local Lagrange Functions}. Since the local bases offer comparable $L^2$ error while being computationally simpler, they offer no drawbacks when compared to the full basis and certainly are a good choice for the doing the discretization step. The results of the two experiments are plotted in Figure \ref{fig:experiments}(a) and Figure \ref{fig:experiments}(b).

A third experiment was conducted keeping $Y$ with fixed and varying $X$. The result is displayed in Figure~\ref{fig:experiments}(c). In this experiment, the error \emph{increases} with decreasing $h_X$. This is counterintuitive, but in complete agreement with the theory. What this illustrates is that the dominant term in the $L^2$ error comes from quadrature. This is no surprise and is a well-known phenomenon in Galerkin methods.

The condition number of the discrete stiffness matrix is dependent primarily on the separation radius of the centers, $q_X$. We theoretically predicted the condition number to be $\calo(q_X^{-2})$, which we validated numerically. See Figure \ref{fig:experiments}(d). In addition, the theory predicts that changing the \emph{quadrature nodes} should not significantly alter the condition number of the stiffness matrix. Again, this result was validated.

\begin{appendix}

\section{Interpolation Errors and the ``Doubling Trick'' } 
\label{SBF_approximation_power}

In this section we will discuss interpolation errors for spherical basis functions. Previous work on error estimates concentrated on interpolating functions \emph{not} smooth enough to be in the reproducing Hilbert space $\caln$  corresponding the to an SBF $\phi$.  

We will also need error estimates for interpolating functions \emph{smoother} than those in $\caln$. Results of this kind have been developed by Schaback \cite{Schaback-00-1} for positive definite functions on $\RR^n$ and on manifolds. In addition, Fuselier and Wright \cite[Proposition~11]{Fuselier_Wright_2012} give a thorough treatment of the topic. For SBFs, the main result is that if $\caln$ is equivalent to Sobolev space $H_\tau$, $\tau>n/2$, then, for functions in $H_{2\tau}$, the error rate is double the one obtained for functions in $\caln$. This result is known as the ``doubling trick.''

Throughout this section we will assume that an SBF $\phi$ has coefficients $\hat \phi_\ell$ that satisfy the following condition. There are constants $c$, $C$ and $L\in \nats$ such that 
\begin{equation}
\label{phi_ell_bounds}
c(1+\lambda_\ell)^{-\tau} \le \hat \phi_\ell \le C(1+\lambda_\ell)^{-\tau}, 
\end{equation}
holds either for all $\ell\ge 0$ or for all $\ell \ge L+1$. Here $\lambda_\ell=\ell(\ell+n-1)$ is an eigenvalue of~$ -\Delta_{\sph^n}$.

\subsection{Positive definite SBFs}

In this section, we will deal with positive definite SBFs, so $\hat \phi_\ell>0$ for all $\ell$. The proposition below is a  statement of the ``doubling trick'' in the case where $f\in H_{\tau+\alpha}$, $0\le \alpha\le \tau$. We follow this up with a general result combining the doubling trick with estimates from \cite[Theorem~5.5]{Narcowich-etal-07-1}. We separate the two so that the doubling trick itself is clearly stated.

\begin{proposition}
\label{pos_def_case}
Let $\alpha,\beta,\tau\in \RR$, with $\tau>n/2$ and  $\alpha,\beta\in [0,\tau]$. Suppose that $f \in H_{\tau+\alpha}(\sph^n)$ and that (\ref{phi_ell_bounds}) holds for all $\ell\ge 0$. If $h_X$ is sufficiently small, then
\begin{align}
\|f-I_Xf\|_{H_\beta} \leq Ch_X^{\tau+\alpha-\beta} \|f\|_{H_{\tau+\alpha}}.
\label{DoublingTrick}
\end{align}
\end{proposition}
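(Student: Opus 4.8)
The plan is to reduce everything to two ingredients that are already in hand: the Zeros Lemma (Lemma~\ref{frac_zeros_lemma}) and the fact that, for a strictly positive definite $\phi$ satisfying \eqref{phi_ell_bounds} for all $\ell\ge 0$, the native space $\caln$ coincides with $H_\tau$ up to norm equivalence (compare \eqref{native_space_inner_prod} with \eqref{sobolev_inner_prod}). The heart of the argument is an intermediate estimate in the \emph{native} norm itself, namely
\[
\|f-I_Xf\|_\caln \le C h_X^{\alpha}\|f\|_{H_{\tau+\alpha}}, \qquad 0\le\alpha\le\tau,
\]
which isolates the ``doubling'' phenomenon; the passage to a general $H_\beta$ norm will then be a one-line application of the Zeros Lemma.

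First I would record that $I_X$ is the $\caln$-orthogonal projection onto $V_{\phi,X}$. Writing $e:=f-I_Xf$ and using the reproducing property $\langle e,\phi(\cdot,x)\rangle_\caln = e(x)=0$ for every $x\in X$, we get $e\perp V_{\phi,X}$ in $\langle\cdot,\cdot\rangle_\caln$; in particular $\langle e, I_Xf\rangle_\caln=0$, so $\|e\|_\caln^2=\langle e,f\rangle_\caln$. This is the only place the interpolation condition enters.

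Next comes the key step. Starting from $\|e\|_\caln^2=\langle e,f\rangle_\caln=\sum_{\ell,k}\hat\phi_\ell^{-1}\hat e_{\ell,k}\overline{\hat f_{\ell,k}}$ and bounding $\hat\phi_\ell^{-1}\le C(1+\lambda_\ell)^\tau$, I would split the weight \emph{asymmetrically} as $(1+\lambda_\ell)^\tau=(1+\lambda_\ell)^{(\tau-\alpha)/2}(1+\lambda_\ell)^{(\tau+\alpha)/2}$ and apply Cauchy--Schwarz to obtain $\|e\|_\caln^2\le C\|e\|_{H_{\tau-\alpha}}\|f\|_{H_{\tau+\alpha}}$. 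The point of the splitting is that all of the extra smoothness is charged to $f$, which has it, while the error $e$ is only measured in $H_{\tau-\alpha}\subseteq H_\tau$, which is all the regularity $e$ possesses (indeed $I_Xf\in V_{\phi,X}\subseteq\caln=H_\tau$, so $e\in H_\tau$). Because $e\in H_\tau$ vanishes on $X$ and $0\le\tau-\alpha\le\tau$, the Zeros Lemma gives $\|e\|_{H_{\tau-\alpha}}\le Ch_X^{\alpha}\|e\|_{H_\tau}\le Ch_X^\alpha\|e\|_\caln$. Substituting and cancelling one factor of $\|e\|_\caln$ yields the displayed native-norm estimate.

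Finally, I would apply the Zeros Lemma once more, this time to $e\in H_\tau$ with target exponent $\beta\le\tau$, obtaining $\|e\|_{H_\beta}\le Ch_X^{\tau-\beta}\|e\|_{H_\tau}\le Ch_X^{\tau-\beta}\|e\|_\caln$. Combining with the native-norm estimate gives $\|f-I_Xf\|_{H_\beta}\le Ch_X^{(\tau-\beta)+\alpha}\|f\|_{H_{\tau+\alpha}}$, which is exactly \eqref{DoublingTrick}. (The case $\alpha=0$ reproduces the standard native-space rate, and the full doubling $\alpha=\tau$ corresponds to $\sigma=0$ in the Zeros Lemma, so no boundary case is lost.) The main obstacle is conceptual rather than computational: one must resist trying to place $e$ in $H_{\tau+\alpha}$ — which fails once $\alpha>\tau-n/2$ because the interpolant is not that smooth — and instead arrange the asymmetric pairing so that only the $H_\tau$-regularity of $e$ is ever used. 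Everything else is norm equivalence and two invocations of Lemma~\ref{frac_zeros_lemma}.
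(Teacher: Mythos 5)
Your proposal is correct and follows essentially the same route as the paper: orthogonality of the interpolation error in the native space, the asymmetric Cauchy--Schwarz splitting $(1+\lambda_\ell)^\tau=(1+\lambda_\ell)^{(\tau-\alpha)/2}(1+\lambda_\ell)^{(\tau+\alpha)/2}$ to charge the extra smoothness to $f$, one application of the Zeros Lemma to get $\|e\|_{H_{\tau-\alpha}}\le Ch_X^\alpha\|e\|_{H_\tau}$, cancellation, and a second application of the Zeros Lemma to pass from $\tau$ down to $\beta$. The only cosmetic difference is that you state the intermediate estimate in the native norm while the paper states it in $\|\cdot\|_{H_\tau}$; by the norm equivalence $\|\cdot\|_\caln\sim\|\cdot\|_{H_\tau}$ these are interchangeable.
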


\begin{proof}
We will first deal with the $\beta = \tau$ case. The interpolant $I_Xf$ being the projection of $f$ onto $V_X$ in the native space $\caln$ implies that $\dotp{f-I_X f}{v}_\caln = 0$ for all $v \in V_X$. Consequently,  we have $\dotp{f-I_Xf}{I_Xf}_\caln = 0$ and so  $\|f-I_Xf\|_\caln^2 = \langle f-I_X,f\rangle_\caln $. Let  $g := f-I_Xf$. The previous equation then takes the form $\|g\|_\caln^2 = \langle g,f\rangle_\caln$. From (\ref{native_space_inner_prod}) and the bounds on $\hat \phi_\ell$, we see that
\begin{align}
\|g\|_\caln^2 =\langle g,f\rangle_\caln &= \sum_{l=0}^{\infty} \sum_{m=1}^{d_\ell}(\hat \phi_\ell)^{-1} \hat{f}_{lm} \overline{\hat g_{lm}} \notag \\
 &\le C\sum_{l=0}^{\infty} \sum_{m=1}^{d_\ell}(1+\lambda_\ell)^{(\tau+\alpha)/2} |\hat{f}_{lm}| (1+\lambda_\ell)^{(\tau-\alpha)/2}  |\hat g_{lm}| \notag \\
 &\le C\bigg( \sum_{\ell,m}(1+\lambda_\ell)^{\tau+\alpha} |\hat{f}_{lm}|^2\bigg)^\frac12 \bigg(\sum_{\ell,m}(1+\lambda_\ell)^{\tau-\alpha}  |\hat g_{lm}|^2\bigg)^\frac12 \notag \\
 &= C\|f\|_{H_{\tau+\alpha}} \, \|g\|_{H_{\tau-\alpha}}. \label{NormProduct}
\end{align}
Applying Lemma~\ref{frac_zeros_lemma} to $g=f-I_Xf$ gives us  $\|g\|_{H_{\tau-\alpha}} \le Ch_X^\alpha \|g\|_{H_\tau}$. 
Combining this and \eqref{NormProduct} then yields
\[
\|g\|_\caln^2 \le Ch_X^\alpha \|f\|_{H_{\tau+\alpha}} \, \|g\|_{H_\tau}. 
\]
In addition, the conditions on $\hat \phi_\ell$ imply that $\|g\|_\caln^2 \ge c\|g\|_{H_\tau}^2$, and so
\[
c\|g\|_{H_\tau}^2 \le Ch_X^\alpha \|f\|_{H_{\tau+\alpha}} \, \|g\|_{H_\tau}.
\]
Dividing both sides above by $\|g \|_{H_\tau}$ and replacing $g$ by $g=f-I_Xf$ then yields the $\beta=\tau$ case.  If  $0<\beta \le \tau$, Lemma~\ref{frac_zeros_lemma} implies that $\|f-I_Xf\|_{H_\beta} \leq Ch^{\tau-\beta} \|f-I_Xf\|_{H_\tau}$. Since we have already shown  that $\|f-I_Xf\|_{H_\tau} \le Ch_X^{\alpha} \|f\|_{H_{\tau+\alpha}}$, we have
$
\|f-I_Xf\|_{H_\beta} \leq Ch^{\tau-\beta} \|f-I_Xf\|_{H_\tau} \leq Ch^{\tau+\alpha-\beta} \|f\|_{H_{\tau+\alpha}}.
$
\end{proof}

We want to combine this with the result found in \cite[Theorem~5.5]{Narcowich-etal-07-1}, which deals with estimates for $f\in H_{\mu}$, $\tau\ge \mu>n/2$, to get the following general result.

\begin{theorem}
\label{general_est_SBF}
Let $\beta, \mu, \tau\in \RR$, with $\tau>n/2$, $n/2 < \mu\le 2\tau$, and $\beta \le \min(\mu,\tau)$. Suppose that $f \in H_{\mu}(\sph^n)$ and that (\ref{phi_ell_bounds}) holds for all $\ell\ge 0$. If $h_X$ is sufficiently small, then
\begin{align}
\|f-I_Xf\|_{H_\beta} \leq Ch_X^{\mu-\beta} \|f\|_{H_{\mu}}.
\label{DoublingTrickCombo}
\end{align}
\end{theorem}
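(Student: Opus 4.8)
The plan is to establish \eqref{DoublingTrickCombo} by splitting the admissible smoothness range at $\mu=\tau$ and invoking, in each piece, one of the two interpolation estimates already in hand. Throughout, $\beta\ge 0$ is understood, since the spaces $H_\beta$ here are the nonnegative-order Bessel-potential spaces. The key observation is that the two regimes $n/2<\mu\le\tau$ and $\tau<\mu\le 2\tau$ are governed respectively by the ``escaping the native space'' estimate of \cite[Theorem~5.5]{Narcowich-etal-07-1} and by the doubling trick of Proposition~\ref{pos_def_case}, and that at the overlap point $\mu=\tau$ both yield the same rate. So the argument is essentially a verification that the hypotheses of the two cited results line up exactly with the hypotheses of Theorem~\ref{general_est_SBF}.

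First I would treat the case $n/2<\mu\le\tau$. Here $f$ lies in $H_\mu$ with $\mu$ no larger than the native-space order $\tau$, which is precisely the setting of \cite[Theorem~5.5]{Narcowich-etal-07-1}. Since $\mu\le\tau$ gives $\min(\mu,\tau)=\mu$, the standing hypothesis $\beta\le\min(\mu,\tau)$ becomes $\beta\le\mu$, which is exactly the restriction needed for that theorem. Applying it directly produces $\|f-I_Xf\|_{H_\beta}\le Ch_X^{\mu-\beta}\|f\|_{H_\mu}$, which is \eqref{DoublingTrickCombo}.

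Next I would treat the case $\tau<\mu\le 2\tau$. Writing $\alpha:=\mu-\tau$, the constraint $\tau<\mu\le 2\tau$ is equivalent to $0<\alpha\le\tau$, so $f\in H_\mu=H_{\tau+\alpha}$ is smoother than the native space, and Proposition~\ref{pos_def_case} applies with this choice of $\alpha$. In this regime $\min(\mu,\tau)=\tau$, so the hypothesis $\beta\le\min(\mu,\tau)$ reads $\beta\le\tau$; together with $\beta\ge0$ this places $\beta$ in the interval $[0,\tau]$ demanded by Proposition~\ref{pos_def_case}. Its conclusion \eqref{DoublingTrick} then reads $\|f-I_Xf\|_{H_\beta}\le Ch_X^{\tau+\alpha-\beta}\|f\|_{H_{\tau+\alpha}}=Ch_X^{\mu-\beta}\|f\|_{H_\mu}$, again giving \eqref{DoublingTrickCombo}. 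The boundary value $\mu=\tau$ may be assigned to either case, both returning the rate $h_X^{\tau-\beta}$, so the two cases cover the full range $n/2<\mu\le 2\tau$ consistently.

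There is no substantive obstacle here: the entire content lies in the two cited estimates, and the work is simply the bookkeeping needed to confirm that the parameter constraints match at the seam. The only point requiring a moment's care is the role of $\min(\mu,\tau)$ in the admissible range of $\beta$, since this quantity switches from $\mu$ to $\tau$ as $\mu$ crosses $\tau$; verifying that $\beta\le\min(\mu,\tau)$ reduces in each case to the precise hypothesis of the corresponding result is what makes the combined statement clean.
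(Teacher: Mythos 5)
Your proof is correct and is essentially the paper's own argument: the paper likewise splits at $\mu=\tau$, invoking \cite[Theorem~5.5]{Narcowich-etal-07-1} when $\mu\le\tau$ and Proposition~\ref{pos_def_case} with $\alpha=\mu-\tau$ when $\mu\ge\tau$. Your additional bookkeeping verifying that $\beta\le\min(\mu,\tau)$ matches the hypotheses of each cited result in the corresponding regime is exactly the (implicit) content of the paper's two-line proof.
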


\begin{proof}
If $\mu\ge \tau$, the result then follows from \eqref{DoublingTrick}, with $\alpha=\mu - \tau$. If $\mu \le \tau$, then \eqref{DoublingTrickCombo} follows \cite[Theorem~5.5]{Narcowich-etal-07-1}.
\end{proof}

\subsection{Conditionally positive definite SBFs}

The SBFs dealt with above are all strictly positive definite. We will also need to obtain interpolation estimates for the conditionally positive definite SBFs discussed in section~\ref{SBFs}. Recall that for these SBFs,  the $\hat \phi_\ell$'s need only be positive for $\ell >L$. For $0\le \ell \le L$, the $\hat\phi_\ell$'s can be arbitrary.  The interpolation operator  for a conditionally positive definite SBF $\phi$ that reproduces $\Pi_L$ is given \eqref{SPD_SBF_interp_op}. Note that the coefficients $a_{\xi,L}$ and the polynomial $p_{X,L}$ are determined by the requirements that the interpolation condition $I_{X,L}f|_X=f|_X$ hold and also that the coefficients satisfy the condition on the right above. This condition also implies that changing the $\hat\phi_\ell$'s, with $0\le \ell \le L$, will \emph{not} change $I_{X,L}f$, because 
\[
\sum_{\xi\in X}a_{\xi,L}\sum_{\ell=0}^L \hat \phi_\ell Y_{\ell,m}(x)Y_{\ell,m}(\xi) = \sum_{\ell=0}^L \hat \phi_\ell Y_{\ell,m}(x)\underbrace{\sum_{\xi\in X}a_{\xi,L}Y_{\ell,m}(\xi)}_0 =0.
\]
The same reasoning further gives us that the terms $\sum_{\xi\in X}a_{\xi,L}\phi(x\cdot \xi)$ and $p_{X,L}$ are orthogonal. Thus, letting $\proj_{\Pi_L}$ be the orthogonal projection onto $\Pi_L$, we have
\begin{equation}
\label{poly_sum_orthog}
\proj_{\Pi_L} I_{X,L}f=p_{X,L}.
\end{equation}  

These remarks above allow us to assume that $\hat \phi_\ell=1$ for $0\le \ell \le L$, with no loss of generality. We will thus make this assumption. Doing so turns $\phi$ into a strictly positive definite SBF and, consequently, makes possible forming the standard SBF interpolant $I_X\!f(x) = \sum_{\xi\in X}a_\xi\phi(x\cdot \xi)$, with the $a_\xi$'s determined by $I_X\!f|_X=f|_X$. 

The two interpolants $I_X\!f$ and $I_{X,L}f$ are related in several ways. First of all, the difference of the two is given by
\[ 
I_X\! f -I_{X,L}f= \sum_{\xi\in X}\big(a_\xi-a_{\xi,L}\big)\phi((\cdot)\cdot \xi) - p_{X,L}.
\]
Since $(I_X\!f-I_{X,L}f)|_X = 0$, $\sum_{\xi\in X}\big(a_{\xi,L}-a_\xi \big)\phi((\cdot)\cdot \xi)$ interpolates $p_{X,L}$. Or, put another way, $ \sum_{\xi\in X}\big(a_\xi - a_{\xi,L} \big)\phi((\cdot)\cdot \xi) = I_Xp_{X,L}$. Rewriting equation above using this fact yields
\begin{equation}
\label{interp_diff}
I_X\! f -I_{X,L}f= I_Xp_{X,L} - p_{X,L}.
\end{equation}

\begin{theorem}
\label{cond_pos_def_case}
Let $\beta, \mu, \tau\in \RR$, with $\tau>n/2$, $n/2 < \mu\le 2\tau$, and $\beta \le \min(\mu,\tau)$. Suppose that $f \in H_{\mu}(\sph^n)$ and that (\ref{phi_ell_bounds}) holds for all $\ell\ge L+1$. If $h_X$ is sufficiently small, then
\begin{align}
\|f-I_{X,L}f\|_{H_\beta} \leq Ch_X^{\mu-\beta} \|f\|_{H_{\mu}}.
\label{DoublingTrickComboCP_SBF}
\end{align}
\end{theorem}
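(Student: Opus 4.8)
The plan is to deduce the conditionally positive definite estimate from the positive definite one, Theorem~\ref{general_est_SBF}, by exploiting the identity \eqref{interp_diff}. Adopting the normalization discussed just above the theorem, I set $\hat\phi_\ell=1$ for $0\le\ell\le L$; since these excluded indices form a finite set, $(1+\lambda_\ell)^{-\tau}$ is bounded above and below by positive constants there, so after enlarging $C$ and shrinking $c$ the normalized kernel satisfies \eqref{phi_ell_bounds} for \emph{all} $\ell\ge 0$ and is strictly positive definite. Hence Theorem~\ref{general_est_SBF} applies verbatim to the plain interpolant $I_X$. Writing $f-I_{X,L}f=(f-I_Xf)+(I_Xf-I_{X,L}f)$ and invoking \eqref{interp_diff} gives $f-I_{X,L}f=(f-I_Xf)+(I_Xp_{X,L}-p_{X,L})$, so it suffices to bound the two summands in the $H_\beta$ norm.

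The first summand is immediate: Theorem~\ref{general_est_SBF} yields $\|f-I_Xf\|_{H_\beta}\le Ch_X^{\mu-\beta}\|f\|_{H_\mu}$. The second summand, $I_Xp_{X,L}-p_{X,L}$, is exactly the interpolation error of $I_X$ applied to the spherical polynomial $p_{X,L}\in\Pi_L\subset C^\infty\subset H_\mu$; applying Theorem~\ref{general_est_SBF} once more gives $\|I_Xp_{X,L}-p_{X,L}\|_{H_\beta}\le Ch_X^{\mu-\beta}\|p_{X,L}\|_{H_\mu}$. Everything then reduces to showing $\|p_{X,L}\|_{H_\mu}\le C\|f\|_{H_\mu}$ with $C$ independent of $h_X$, after which the triangle inequality finishes the argument.

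I expect this last bound to be the main obstacle, because of an apparent circularity. Since $\Pi_L$ is finite dimensional, all of its norms are equivalent and $\|p_{X,L}\|_{H_\mu}\le C_L\|p_{X,L}\|_{L^2}$. By \eqref{poly_sum_orthog}, $p_{X,L}=\proj_{\Pi_L}I_{X,L}f$, whence $\|p_{X,L}\|_{L^2}\le\|\proj_{\Pi_L}(I_{X,L}f-f)\|_{L^2}+\|\proj_{\Pi_L}f\|_{L^2}\le\|I_{X,L}f-f\|_{L^2}+\|f\|_{L^2}$, using that $\proj_{\Pi_L}$ is an $L^2$ contraction. This seemingly requires the very $L^2$ estimate I am trying to establish; the way out is that only a \emph{crude}, rate-free bound $\|I_{X,L}f-f\|_{L^2}\le C\|f\|_{H_\mu}$ is needed here, and this is supplied by the standard non-doubling interpolation estimate for conditionally positive definite SBFs used with smoothness $\min(\mu,\tau)\le\tau$, as in \cite{Narcowich-etal-07-1, HNW_3_2011}. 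Since $f\in H_\mu\subset H_{\min(\mu,\tau)}$, this gives $\|I_{X,L}f-f\|_{L^2}\le Ch_X^{\min(\mu,\tau)}\|f\|_{H_{\min(\mu,\tau)}}\le C\|f\|_{H_\mu}$ for $h_X$ bounded, hence $\|p_{X,L}\|_{H_\mu}\le C\|f\|_{H_\mu}$. No genuine circularity arises, because this auxiliary bound lives entirely in the already-established regime $\mu\le\tau$ and does not invoke doubling. Combining the two summand estimates then yields \eqref{DoublingTrickComboCP_SBF}.
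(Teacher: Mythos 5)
Your normalization, the decomposition $f-I_{X,L}f=(f-I_Xf)+(I_Xp_{X,L}-p_{X,L})$ via \eqref{interp_diff}, and the two applications of Theorem~\ref{general_est_SBF} coincide exactly with the paper's argument. The divergence --- and the gap --- is in how you control $\|p_{X,L}\|_{H_\mu}$. You reduce it, via norm equivalence on $\Pi_L$ and the $L^2$-contractivity of $\proj_{\Pi_L}$, to the crude bound $\|I_{X,L}f-f\|_{L^2}\le C\|f\|_{H_\mu}$, which you then import from ``standard non-doubling interpolation estimates for conditionally positive definite SBFs'' with smoothness $\min(\mu,\tau)$. That import fails in the regime $n/2<\mu<\tau$. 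There $f$ lies \emph{outside} the native space of $\phi$, and an error bound for the \emph{conditionally} positive definite interpolant $I_{X,L}$ applied to such rough $f$ is not what the cited sources provide: \cite[Theorem~5.5]{Narcowich-etal-07-1} is a positive definite result (the paper invokes it only for $I_X$, never for $I_{X,L}$), while the estimates of \cite{HNW_3_2011} concern functions in the native space of the polyharmonic kernels. Any attempt to transfer the positive definite bound for $I_X$ over to $I_{X,L}$ goes through \eqref{interp_diff} and so reintroduces $p_{X,L}$ --- the very quantity you are trying to bound. In other words, for $n/2<\mu<\tau$ your auxiliary bound is not ``already established''; it is precisely the $\beta=0$ instance of the theorem under proof, so the argument is circular in that range (which is part of the theorem's claim, since only $n/2<\mu\le2\tau$ is assumed). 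For $\mu\ge\tau$ your crude bound is acceptable, since native-space error estimates for conditionally positive definite kernels are classical; the gap is confined to, but genuine in, the sub-native-space regime.

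The repair is small, and it is what the paper does: do not drop to $L^2$, and do not seek any a priori bound on $p_{X,L}$. Estimate $\|p_{X,L}\|_{H_\mu}\le C_L\|p_{X,L}\|_{H_\beta}$ by norm equivalence on the finite-dimensional space $\Pi_L$, then use \eqref{poly_sum_orthog} in the form $p_{X,L}=\proj_{\Pi_L}(I_{X,L}f-f)+\proj_{\Pi_L}f$ to get $\|p_{X,L}\|_{H_\beta}\le\|I_{X,L}f-f\|_{H_\beta}+\|f\|_{H_\mu}$. Feeding this into your bound for the second summand produces the self-referential inequality
\[
\|f-I_{X,L}f\|_{H_\beta}\le Ch_X^{\mu-\beta}\|f\|_{H_\mu}+Ch_X^{\mu-\beta}\|f-I_{X,L}f\|_{H_\beta},
\]
and for $h_X$ so small that $Ch_X^{\mu-\beta}\le\tfrac12$ the last term is absorbed into the left-hand side. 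This bootstrap requires nothing about $I_{X,L}$ beyond what you have already proved, and it covers all $n/2<\mu\le2\tau$ uniformly.
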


\begin{proof}
From \eqref{interp_diff} and Theorem~\ref{general_est_SBF} we see that 
\begin{equation}
\label{condit_error_bnd}
\| I_{X,L}f -f \|_{H_\beta}\le \|I_Xf - f \|_{H_\beta} + \|I_Xp_{X,L} - p_{X,L} \|_{H_\beta}\le Ch^{\mu - \beta} \| f \|_{H_\mu} + \|I_Xp_{X,L} - p_{X,L} \|_{H_\beta}.
\end{equation}
Because $p_{X,L}$ is a degree $L$ polynomial, it is analytic, so of course it is in $H_{\mu}$. It follows from Theorem~\ref{general_est_SBF} that  $\|I_X\,p_{X,L} - p_{X,L} \|_{H_\beta} \le Ch_X^{\mu-\beta} \| p_{X,L} \|_{H_{\mu}}$. 
Furthermore, because $p_{X,L}\in \Pi_L$, 
\[
\lambda_L^{-(\mu-\beta)/2} \| p_{X,L}\|_{H_{\mu}} \le \|p_{X,L}\|_{H_\beta} \le \lambda_L^{(\mu-\beta)/2} \| p_{X,L}\|_{H_{\mu}}.
\]
Consequently, $ \|I_Xp_{X,L} - p_{X,L} \|_{H_\beta} \le Ch_X^{\mu - \beta}\|p_{X,L}\|_{H_\beta}$. 
Rewriting \eqref{poly_sum_orthog} as $\proj_{\Pi_L} (I_{X,L}f-f)+\proj_{\Pi_L}f=p_{X,L}$, taking the $H_\beta$ norm, and using $\|f\|_{H_\beta}\le \|f\|_{H_{\mu}}$, we obtain
\[
 \| p_{X,L} \|_{H_\beta} =\|\proj_{\Pi_L} (I_{X,L}f-f)+\proj_{\Pi_L}f\|_{H_\beta} 
\le \| I_{X,L}f-f \|_{H_\beta }+ \|f\|_{H_{\mu}},
\]
from which it follows that
\begin{equation}
\label{poly_error_bnd}
\|I_Xp_{X,L} - p_{X,L} \|_{H_\beta} \le Ch_X^{\mu-\beta} \big(\| I_{X,L}f-f \|_{H_\beta }+ \|f\|_{H_{\mu}}\big).
\end{equation}
From \eqref{condit_error_bnd} and the previous inequality, we have
\[
\| I_{X,L}f -f \|_{H_\beta} \le Ch_X^{\mu-\beta} \|f\|_{H_{\mu}} + Ch_X^{\mu-\beta} \| I_{X,L}f -f \|_{H_\beta}.
\]
Choosing $h_X$ so small that $Ch_X^\alpha <\frac12$ yields 
\[
\| I_{X,L}f -f \|_{H_\tau} \le Ch_X^\alpha \|f\|_{H_{\tau+\alpha}} + \tfrac12 \| I_{X,L}f -f \|_{H_\beta}.
\]
Subtracting $\frac12 \| I_{X,L}f - f \|_{H_{\beta}}$ from both sides and manipulating the result gives us  \eqref{DoublingTrickComboCP_SBF}.
\end{proof}
\section{Sobolev Space Algebras}
\label{Sobolev Space Algebras}

In addition to the estimates on interpolation error estimates derived above, we will need to deal with bounds on Sobolev norms of products of functions. Fortunately,  Coulhon \emph{et al.} \cite{Coulhon-et-al-2001} have established the requisite results. We will state these results for $\sph^n$, in our notation, and only for the ``p=2'' cases.  Before we state these results, we point out that $L^2_\tau=H_\tau$ and that the norm  $\| f \|_{\tau,2}=\| (-\Delta)^{\tau/2}f\|_{L^2} +  \| f\|_{L^2}$, which is defined on \cite[p.~286]{Coulhon-et-al-2001}, is equivalent to $\| f \|_{H_\tau}$. 

\begin{theorem}[{\cite[Theorem 27]{Coulhon-et-al-2001}}] 
\label{leibnitz_rule}
Let $f,g$ be in $H_{\tau}\cap L^\infty$, where $\tau \in [0,\infty)$. Then, $fg\in H_\tau\cap L^\infty$ and there exists $C>0$ such that
\begin{equation}
\label{leibinitz_rule_bnd}
\|fg\|_{H_\tau} \le C\big(\|f\|_{L^\infty} \|g\|_{H_\tau}  + \|g\|_{L^\infty}\|f\|_{H_\tau} \big)
\end{equation}

\begin{proof}
We just need to verify that the conditions in \cite[Theorem 27]{Coulhon-et-al-2001} are satisfied. The parameters in \cite[Theorem 27]{Coulhon-et-al-2001} connect with ours this way: $\tau:=\alpha$, $p_1=q_2=\infty$, $p_2=q_1 =2$.  If we take $\M=\sph^n$, then all the conditions imposed on the manifold, including that of bounded geometry and positive injectivity radius, are satisfied. Thus, \eqref{leibinitz_rule_bnd} holds for $\tau \in [0,\infty)$. 
\end{proof}
\end{theorem}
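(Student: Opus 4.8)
The plan is to deduce \eqref{leibinitz_rule_bnd} by specializing the general product estimate of Coulhon \emph{et al.} \cite[Theorem~27]{Coulhon-et-al-2001}, which provides mixed H\"older-type bounds for products in the Bessel-potential spaces $L^p_\alpha$ over a Riemannian manifold satisfying suitable geometric hypotheses. First I would record the two facts that legitimize this in our setting: that the potential space $L^2_\tau$ coincides with $H_\tau$ on $\sph^n$, with the norm $\|f\|_{\tau,2}=\|(-\Delta)^{\tau/2}f\|_{L^2}+\|f\|_{L^2}$ equivalent to $\|f\|_{H_\tau}$ (as noted just before the statement), and that $\sph^n$, being a compact homogeneous Riemannian manifold, has bounded geometry, a positive injectivity radius, and a doubling volume measure. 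These are exactly the structural requirements placed on the manifold in \cite{Coulhon-et-al-2001}, so the cited theorem applies with $\M=\sph^n$.

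Second, I would carry out the parameter matching. The cited theorem is stated with two H\"older exponent pairs; taking the smoothness index equal to $\tau$ and choosing $p_1=q_2=\infty$ together with $p_2=q_1=2$ collapses the two mixed terms to $\|f\|_{L^\infty}\|g\|_{L^2_\tau}$ and $\|g\|_{L^\infty}\|f\|_{L^2_\tau}$, respectively. Replacing the potential-space norms by the equivalent Sobolev norms then produces precisely \eqref{leibinitz_rule_bnd}. The closure statement $fg\in H_\tau\cap L^\infty$ follows at once: membership in $L^\infty$ is immediate from $\|fg\|_{L^\infty}\le \|f\|_{L^\infty}\|g\|_{L^\infty}$, and finiteness of the right-hand side of the estimate gives $fg\in H_\tau$.

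The only real content, and hence the main obstacle, is the bookkeeping of the hypothesis verification rather than any genuinely hard analysis: one must confirm that the geometric conditions in \cite{Coulhon-et-al-2001}, which are phrased for general possibly noncompact manifolds, all hold for $\sph^n$. For a compact symmetric space this is routine but should be stated explicitly. A self-contained alternative, which I would avoid here because the citation is far cleaner, is to prove \eqref{leibinitz_rule_bnd} directly by a Littlewood--Paley paraproduct decomposition built from spectral projections of $-\Delta$ on $\sph^n$: one writes $fg$ as a sum of low--high, high--low, and high--high paraproducts and estimates each using the spectral multiplier calculus and the embedding afforded by $H_\tau\cap L^\infty$. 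That route reproduces the same H\"older pairing but requires assembling the full harmonic-analytic apparatus, which is unnecessary given the available reference.
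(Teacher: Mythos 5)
Your proposal is correct and follows essentially the same route as the paper: both deduce \eqref{leibinitz_rule_bnd} by invoking \cite[Theorem~27]{Coulhon-et-al-2001} with the identical parameter matching $\tau:=\alpha$, $p_1=q_2=\infty$, $p_2=q_1=2$, checking that $\sph^n$ satisfies the geometric hypotheses (bounded geometry, positive injectivity radius), and using the equivalence of the $L^2_\tau$ and $H_\tau$ norms. Your additional remarks on the $L^\infty$ closure and the paraproduct alternative go slightly beyond the paper's proof but do not change the argument.
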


Using Banach space interpolation methods, Coulhon \emph{et al.}\footnote{As stated, the theorem \cite[Theorem~30]{Coulhon-et-al-2001}, which was  employed in the interpolation process, requires that the manifold be unbounded. However, examining the result as stated in \cite[Theorem~7.4.5]{triebel1992} does not make this assumption, and so the interpolation result holds for compact manifolds as well.} showed that the following holds:

\begin{proposition} [{\cite[p.~334]{Coulhon-et-al-2001}}] \label{grad_soblev_prop}
Let $\tau>0$.  Then $f$ is in $H_{\tau+1}$ if and only if $f$ and $|\nabla f|$ are in $H_\tau$. In addition, $\| f \|_{H_{\tau+1}} \sim \| f \|_{H_\tau} + \| |\nabla f| \|_{H_\tau}$.
\end{proposition}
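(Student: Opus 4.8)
The plan is to read $\||\nabla f|\|_{H_\tau}$ as the Sobolev norm of the gradient \emph{as a section} of the tangent bundle, the pointwise length $|\nabla f|$ being its fibre norm; with this reading I would first settle the integer orders by merely counting covariant derivatives, and then pass to arbitrary real $\tau>0$ by interpolation, exactly as the footnote to \cite{Coulhon-et-al-2001} indicates. The two directions of the ``if and only if'' will come out simultaneously from the resulting norm equivalence.

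First I would record the integer base cases. For a nonnegative integer $k$, the norm $\|f\|_{H_k}$ is equivalent to the $W_2^k$ norm $\big(\sum_{j=0}^k \|\nabla^j f\|_{L^2}^2\big)^{1/2}$, as noted after \eqref{sobolev_inner_prod}. Applying the same description to the section $\nabla f$ gives $\|\nabla f\|_{W_2^k} \sim \big(\sum_{j=0}^k \|\nabla^{j+1}f\|_{L^2}^2\big)^{1/2}=\big(\sum_{j=1}^{k+1}\|\nabla^j f\|_{L^2}^2\big)^{1/2}$. Adding the two estimates and absorbing the overlapping terms yields
\[
\|f\|_{W_2^k} + \|\nabla f\|_{W_2^k} \sim \Big(\sum_{j=0}^{k+1}\|\nabla^j f\|_{L^2}^2\Big)^{1/2} \sim \|f\|_{W_2^{k+1}} \sim \|f\|_{H_{k+1}},
\]
which is precisely Proposition~\ref{grad_soblev_prop} for $\tau=k$: the forward implication holds because every $\nabla^j f$, $0\le j\le k+1$, is controlled by $\|f\|_{H_{k+1}}$, and the reverse because the left-hand side dominates $\|f\|_{H_{k+1}}$.

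Next I would interpolate. Writing $Tf:=(f,\nabla f)$, the integer estimates say that $T$ maps $H_{k+1}$ boundedly into $H_k\oplus H_k(\mathrm{sections})$ and is bounded below there, for every integer $k\ge 0$. Complex interpolation of the scalar and section scales, $[H_k,H_{k+1}]_\theta=H_{k+\theta}$ with the same identity for sections — this is where the bounded-geometry and positive-injectivity-radius hypotheses of \cite[Theorem~30]{Coulhon-et-al-2001} enter, and these hold on the compact $\sph^n$ as the footnote records — then propagates both the upper bound $\|f\|_{H_\tau}+\||\nabla f|\|_{H_\tau}\le C\|f\|_{H_{\tau+1}}$ and the lower bound $\|f\|_{H_{\tau+1}}\le C\big(\|f\|_{H_\tau}+\||\nabla f|\|_{H_\tau}\big)$ to every real $\tau>0$. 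The membership assertion is then immediate from the norm equivalence.

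The main obstacle is the fractional range, and specifically justifying that the \emph{reconstruction} of the $H_{\tau+1}$ norm of $f$ from the pair $(f,\nabla f)$ interpolates correctly: the lower bound is a statement about a left inverse of $T$, built from the identity $(I-\Delta)f=f-\mathrm{div}\,\nabla f$, and one must check that this inverse is bounded on the interpolated scale. On $\sph^n$ this is tractable because $\mathrm{div}$ and $(I-\Delta)^{-1}$ are standard order $+1$ and order $-2$ operators in the spherical-harmonic calculus, so the required mapping properties reduce to the monotonicity of $(1+\lambda_\ell)^s$ used throughout the paper; the only genuine care is in commuting $\nabla$ past the Bessel potentials, which on the Einstein manifold $\sph^n$ costs merely a bounded zeroth-order Weitzenb\"ock term and therefore does not disturb the order bookkeeping.
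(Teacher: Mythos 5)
The paper contains no proof of this proposition: it is quoted from \cite[p.~334]{Coulhon-et-al-2001}, and the only original content is the footnote observing that the interpolation theorem used there, though stated for unbounded manifolds, survives on compact ones via \cite[Theorem~7.4.5]{triebel1992}. Your argument therefore supplies what the citation hides, and its skeleton --- integer orders by counting covariant derivatives, then interpolation of the couple --- is exactly the route the footnote points to. One caution: your middle paragraph says interpolation ``propagates both the upper bound and the lower bound,'' which as stated is false (lower bounds for an operator do not interpolate); but your final paragraph supplies the correct mechanism, and it works. The left inverse $S(g,X)=(I-\Delta)^{-1}\bigl(g-\mathrm{div}\,X\bigr)$ of $Tf=(f,\nabla f)$ is a bounded \emph{linear} map $H_k\oplus H_k(\mathrm{sections})\to H_{k+1}$ at every integer level, hence bounded on the interpolated scale, and the identity $f=S(f,\nabla f)$ then yields both the reverse inequality and the membership $f\in H_{\tau+1}$ in the ``if'' direction. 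On $\sph^n$ the mapping properties you need are as routine as you claim; indeed with the Hodge Laplacian the commutation is exact ($\delta\Delta_H=\Delta_H\delta$), so even the Weitzenb\"ock bookkeeping can be avoided.

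The point that deserves emphasis is your opening decision to read $\||\nabla f|\|_{H_\tau}$ as the norm of $\nabla f$ \emph{as a section}: this is forced, not a convention. Under the literal scalar reading the proposition is false: $f=z|_{\sph^2}$ is smooth, yet $|\nabla f|=\sqrt{1-z^2}$ has conical singularities at the poles and lies outside $H_\tau(\sph^2)$ for $\tau\ge 2$, so the ``only if'' direction fails there. Under your reading, the statement is true, and on the sphere it even admits a proof with no interpolation at all: expanding $f$ in spherical harmonics and using that $dY_{\ell,k}$ is a Hodge eigenform with eigenvalue $\lambda_\ell$ and $\|dY_{\ell,k}\|_{L^2}^2=\lambda_\ell$ gives $\|f\|_{H_\tau}^2+\|\nabla f\|_{H_\tau}^2=\sum_{\ell,k}(1+\lambda_\ell)^{\tau}(1+\lambda_\ell)|\hat f_{\ell,k}|^2=\|f\|_{H_{\tau+1}}^2$ exactly, when the section norm is defined through $\Delta_H$. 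Be aware, however, that the paper later applies the proposition in Corollary~\ref{grad_squared_bnd} by feeding the \emph{scalar} $|\nabla f|$ into the scalar Leibniz rule of Theorem~\ref{leibnitz_rule}; under your (correct) reading that step requires instead a tensor-valued Leibniz rule (true, by the same local-coordinate proof). So your proof is sound, but it exposes a small mismatch between what the proposition can legitimately assert and how the paper uses it.
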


\begin{corollary}\label{grad_squared_bnd}
Let $f,g\in H_{\tau+1}$ and suppose that $f,g,|\nabla f|, |\nabla g| \in L^\infty$. Then $\nabla f\cdot \nabla g\in H_\tau\cap L^\infty$ and
\[
\| \nabla f \cdot \nabla g \|_{H_\tau} \le C \big(\|f\|_{H_{\tau+1}} + \|g\|_{H_{\tau+1}}\big) \big(\| |\nabla f | \|_{L^\infty} +\| |\nabla g| \|_{L^\infty}\big).
\]

\begin{proof}
We will first prove the result for $f=g$.  By Proposition~\ref{grad_soblev_prop}, $|\nabla f| \in H_\tau$ and $\| |\nabla f| \|_{H_\tau}  \le C\big(\| f \|_{H_{\tau+1}} + \|  f \|_{H_\tau}\big)\le C \| f \|_{H_{\tau+1}}$. Moreover, by Theorem~\ref{leibnitz_rule}, $|\nabla f|^2 \in H_\tau$. Also, since $ |\nabla f| \in L^\infty$, $|\nabla f|^2 \in L^\infty$. The remarks above and \eqref{leibinitz_rule_bnd} then imply that
\[
\| |\nabla f|^2\|_{H_\tau} \le C\| |\nabla f| \|_{H_\tau}\| |\nabla f | \|_{L^\infty} \le C \| f \|_{H_{\tau+1}} \| |\nabla f | \|_{L^\infty}.
\]
The result for $f=g$ then follows immediately. For the general case, just use  $| \nabla (f+g) |^2  - | \nabla (f-g) |^2 = 4\nabla f\cdot \nabla g$ and apply the result for the $f=g$ case.
\end{proof}

\end{corollary}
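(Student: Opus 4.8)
The plan is to reduce the general bilinear estimate to the diagonal case $f=g$ by polarization, and to handle that diagonal case by combining the gradient characterization of $H_{\tau+1}$ with the Sobolev-algebra (Leibniz) estimate. Both ingredients are already available: Proposition~\ref{grad_soblev_prop} and Theorem~\ref{leibnitz_rule}.

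First I would treat $f=g$. By Proposition~\ref{grad_soblev_prop}, membership $f\in H_{\tau+1}$ is equivalent to having both $f$ and $|\nabla f|$ in $H_\tau$, and moreover $\||\nabla f|\|_{H_\tau}\le C\big(\|f\|_{H_{\tau+1}}+\|f\|_{H_\tau}\big)\le C\|f\|_{H_{\tau+1}}$. Since $|\nabla f|\in L^\infty$ by hypothesis, the scalar function $|\nabla f|$ lies in $H_\tau\cap L^\infty$, so Theorem~\ref{leibnitz_rule} applies to the product $|\nabla f|\cdot|\nabla f|=|\nabla f|^2=\nabla f\cdot\nabla f$. This gives $\nabla f\cdot\nabla f\in H_\tau\cap L^\infty$ together with the bound $\||\nabla f|^2\|_{H_\tau}\le C\||\nabla f|\|_{H_\tau}\,\||\nabla f|\|_{L^\infty}\le C\|f\|_{H_{\tau+1}}\,\||\nabla f|\|_{L^\infty}$, which is exactly the claimed inequality in the case $f=g$.

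For the general case I would invoke the polarization identity $|\nabla(f+g)|^2-|\nabla(f-g)|^2=4\,\nabla f\cdot\nabla g$, which holds pointwise because $\nabla$ is linear and the Riemannian dot product is a symmetric bilinear form. Applying the diagonal estimate just established to $f+g$ and to $f-g$, and using $\|f\pm g\|_{H_{\tau+1}}\le\|f\|_{H_{\tau+1}}+\|g\|_{H_{\tau+1}}$ together with $\||\nabla(f\pm g)|\|_{L^\infty}\le\||\nabla f|\|_{L^\infty}+\||\nabla g|\|_{L^\infty}$, the triangle inequality for the $H_\tau$ norm yields $\|\nabla f\cdot\nabla g\|_{H_\tau}\le C\big(\|f\|_{H_{\tau+1}}+\|g\|_{H_{\tau+1}}\big)\big(\||\nabla f|\|_{L^\infty}+\||\nabla g|\|_{L^\infty}\big)$, and the membership $\nabla f\cdot\nabla g\in H_\tau\cap L^\infty$ follows in the same way.

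The argument is essentially mechanical once these two tools are in hand; the only point demanding care is the recognition that $|\nabla f|$, the pointwise norm of the covariant gradient, is a genuine scalar function in $H_\tau\cap L^\infty$, so that the scalar Leibniz rule of Theorem~\ref{leibnitz_rule} may legitimately be applied to $|\nabla f|^2$. The potential obstacle is justifying $|\nabla f|\in H_\tau$ without a separate regularity computation, and this is precisely what Proposition~\ref{grad_soblev_prop} supplies, so no further work is required there.
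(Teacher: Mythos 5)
Your proposal is correct and follows essentially the same route as the paper's own proof: the diagonal case $f=g$ via Proposition~\ref{grad_soblev_prop} and the Leibniz estimate \eqref{leibinitz_rule_bnd} applied to the scalar function $|\nabla f|$, followed by the polarization identity $|\nabla(f+g)|^2-|\nabla(f-g)|^2=4\,\nabla f\cdot\nabla g$ to recover the general bilinear case. The only difference is that you spell out the triangle-inequality bookkeeping for $f\pm g$ that the paper leaves implicit, which is a fine addition but not a distinct argument.
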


\end{appendix}
\bibliographystyle{plain}
\bibliography{galerkinsph.bbl}

\begin{thebibliography}{10}

\bibitem{aubin1982}
Thierry Aubin.
\newblock {\em Nonlinear analysis on manifolds. {M}onge-{A}mp\`ere equations},
  volume 252 of {\em Grundlehren der Mathematischen Wissenschaften [Fundamental
  Principles of Mathematical Sciences]}.
\newblock Springer-Verlag, New York, 1982.

\bibitem{BondLehoucqRowe}
Stephen~D. Bond, Rich~B. Lehoucq, and Stephen~T. Rowe.
\newblock A {G}alerkin {R}adial {B}asis {F}unction {M}ethod for {N}onlocal
  {D}iffusion, 2014.
\newblock http://www.sandia.gov/~rblehou/snl-sand2013-10673P.pdf.

\bibitem{Coulhon-et-al-2001}
Thierry Coulhon, Emmanuel Russ, and Val{\'e}rie Tardivel-Nachef.
\newblock Sobolev algebras on {L}ie groups and {R}iemannian manifolds.
\newblock {\em Amer. J. Math.}, 123(2):283--342, 2001.

\bibitem{QiangDuSIAMNews}
Qiang Du and Robert Lipton.
\newblock Peridynamics, {F}racture, and {N}onlocal {C}ontinuum {M}odels.
\newblock {\em {SIAM} {N}ews}, 47(3), 2014.

\bibitem{FlyerLehtoBlaiseWrightStCyr2012}
N.~Flyer, E.~Lehto, S.~Blaise, G.~B. Wright, and A.~St-Cyr.
\newblock A guide to {RBF}-generated finite differences for nonlinear
  transport: shallow water simulations on a sphere.
\newblock {\em J. Comput. Phys.}, 231:4078--4095, 2012.

\bibitem{flyer_wright2009}
Natasha Flyer and Grady~B. Wright.
\newblock A radial basis function method for the shallow water equations on a
  sphere.
\newblock {\em Proc. R. Soc. Lond. Ser. A Math. Phys. Eng. Sci.},
  465(2106):1949--1976, 2009.

\bibitem{Folland_book_1976}
Gerald~B. Folland.
\newblock {\em Introduction to partial differential equations}.
\newblock Princeton University Press, Princeton, N.J., 1976.
\newblock Preliminary informal notes of university courses and seminars in
  mathematics, Mathematical Notes.

\bibitem{FHNWW2012}
E.~Fuselier, T.~Hangelbroek, F.~J. Narcowich, J.~D. Ward, and G.~B. Wright.
\newblock Localized bases for kernel spaces on the unit sphere.
\newblock {\em SIAM J. Numer. Anal.}, 51(5):2538--2562, 2013.

\bibitem{FHNWW2013}
E.~Fuselier, T.~Hangelbroek, F.J. Narcowich, J.D. Ward, and G.B. Wright.
\newblock Kernel based quadrature on spheres and other homogeneous spaces.
\newblock {\em Numerische Mathematik}, 127(1):57--92, 2014.

\bibitem{Fuselier_Wright_2012}
Edward Fuselier and Grady~B. Wright.
\newblock Scattered data interpolation on embedded submanifolds with restricted
  positive definite kernels: {S}obolev error estimates.
\newblock {\em SIAM J. Numer. Anal.}, 50(3):1753--1776, 2012.

\bibitem{Giraldo:1997}
F.~X. Giraldo.
\newblock {L}agrange-{G}alerkin methods on spherical geodesic grids.
\newblock {\em J. Comput. Phys.}, 136:197--213, 1997.

\bibitem{HNSW_2_2011}
T.~Hangelbroek, F.~J. Narcowich, X.~Sun, and J.~D. Ward.
\newblock Kernel approximation on manifolds {II}: the {$L_\infty$} norm of the
  {$L_2$} projector.
\newblock {\em SIAM J. Math. Anal.}, 43(2):662--684, 2011.

\bibitem{HNW_3_2011}
T.~Hangelbroek, F.~J. Narcowich, and J.~D. Ward.
\newblock Polyharmonic and {R}elated {K}ernels on {M}anifolds: {I}nterpolation
  and {A}pproximation.
\newblock {\em Found.\ Comput.\ Math.}, 12:625--670, 2012.

\bibitem{hesse-et-al-2010}
K.~Hesse, I.~H. Sloan, and R.~S. Womersley.
\newblock Numerical {I}ntegration on the {S}phere.
\newblock In W.~Freeden, Z.~M. Nashed, and T.~Sonar, editors, {\em Handbook of
  {G}eomathematics}. Springer-{V}erlag, 2010.

\bibitem{HuttigKai:2008}
Christian H\"uttig and Kai Stemmer.
\newblock {The spiral grid: A new approach to discretize the sphere and its
  application to mantle convection}.
\newblock {\em Geochem. Geophys. Geosyst.}, 9(2):Q02018, 2008.

\bibitem{LeGia-2004}
Q.~T. Le~Gia.
\newblock Galerkin approximation for elliptic {PDE}s on spheres.
\newblock {\em J. Approx. Theory}, 130(2):125--149, 2004.

\bibitem{LeGia-2005}
Q.~T. Le~Gia.
\newblock The {$L^2(S^n)$} error estimate for the {G}alerkin method for
  elliptic {PDE}s on spheres.
\newblock In {\em Approximation theory {XI}: {G}atlinburg 2004}, Mod. Methods
  Math., pages 229--239. Nashboro Press, Brentwood, TN, 2005.

\bibitem{LeGia-et-al-10-1}
Q.~T. Le~Gia, I.~H. Sloan, and H.~Wendland.
\newblock Multiscale analysis in {S}obolev spaces on the sphere.
\newblock {\em SIAM J. Numer. Anal.}, 48(6):2065--2090, 2010.

\bibitem{LeGia-et-al-11-1}
Q.~T. Le~Gia, I.~H. Sloan, and H.~Wendland.
\newblock Multiscale {RBF} collocation for solving {PDE}s on spheres.
\newblock {\em Numer. Math.}, 121(1):99--125, 2012.

\bibitem{Majewski:2002GME}
D.~Majewski, D.~Liermann, P.~Prohl, B.~Ritter, M.~Buchhold, T.~Hanisch,
  G.~Paul, W.~Wergen, and J.~Baumgardner.
\newblock The operational global icosahedral-hexagonal gridpoint model
  {G}{M}{E}: Description and high-resolution tests.
\newblock {\em Mon. Wea. Rev.}, 130:319--338, 2002.

\bibitem{mhaskar-etal-2010}
H.~N. Mhaskar, F.~J. Narcowich, J.~Prestin, and J.~D. Ward.
\newblock {$L^p$} {B}ernstein estimates and approximation by spherical basis
  functions.
\newblock {\em Math. Comp.}, 79(271):1647--1679, 2010.

\bibitem{Mikhailets-Murach-2012}
Vladimir~A. Mikhailets and Aleksandr~A. Murach.
\newblock The refined {S}obolev scale, interpolation, and elliptic problems.
\newblock {\em Banach J. Math. Anal.}, 6(2):211--281, 2012.

\bibitem{Mueller-66-1}
C.~M\"uller.
\newblock {\em Spherical {H}armonics}.
\newblock Springer, Berlin, 1966.

\bibitem{Narcowich-etal-07-1}
F.~J. Narcowich, X.~Sun, J.~D. Ward, and H.~Wendland.
\newblock Direct and inverse {S}obolev error estimates for scattered data
  interpolation via spherical basis functions.
\newblock {\em Found. Comput. Math.}, 7:369--390, 2007.

\bibitem{Nitsche-71}
J.~Nitsche.
\newblock \"{U}ber ein {V}ariationsprinzip zur {L}\"osung von
  {D}irichlet-{P}roblemen bei {V}erwendung von {T}eilr\"aumen, die keinen
  {R}andbedingungen unterworfen sind.
\newblock {\em Abh. Math. Sem. Univ. Hamburg}, 36:9--15, 1971.
\newblock Collection of articles dedicated to Lothar Collatz on his sixtieth
  birthday.

\bibitem{Ringler:2000GeodesicGrids}
T.~D. Ringler, R.~P. Heikes, and D.~A. Randall.
\newblock Modeling the atmospheric general circulation using a spherical
  geodesic grid: A new class of dynamical cores.
\newblock {\em Mon. Wea. Rev.}, 128:2471--2490, 2000.

\bibitem{Schaback-00-1}
Robert Schaback.
\newblock A unified theory of radial basis functions. {N}ative {H}ilbert spaces
  for radial basis functions. {II}.
\newblock {\em J. Comput. Appl. Math.}, 121(1-2):165--177, 2000.
\newblock Numerical analysis in the 20th century, Vol. I, Approximation theory.

\bibitem{SWFK2012}
Varun Shankar, Grady~B. Wright, Aaron~L. Fogelson, and Robert~M. Kirby.
\newblock A study of different modeling choices for simulating platelets within
  the immersed boundary method.
\newblock {\em Appl. Numer. Math.}, page In Press, 2012.

\bibitem{SlobbeSimonsKlees:2012}
D.~Slobbe, F.~Simons, and R.~Klees.
\newblock The spherical {S}lepian basis as a means to obtain spectral
  consistency between mean sea level and the geoid.
\newblock {\em Journal of Geodesy}, 86:609--628, 2012.
\newblock 10.1007/s00190-012-0543-x.

\bibitem{sommariva_womer2005}
A.~Sommariva and R.~S. Womersley.
\newblock Integration by rbf over the sphere.
\newblock Applied Mathematics Report AMR05/17, U. of New South Wales.

\bibitem{Strichartz-83-1}
R.~S. Strichartz.
\newblock Analysis of the laplacian on the complete riemannian manifold.
\newblock {\em J.\ Funct.\ Anal.}, 52:48--79, 1983.

\bibitem{StuhnePeltier:1999}
G.~R. Stuhne and W.~R. Peltier.
\newblock New icosahedral grid-point discretizations of the shallow water
  equations on the sphere.
\newblock {\em J. Comput. Phys.}, 148:23--53, 1999.

\bibitem{SwinbankPurser:2006}
Richard Swinbank and R.~James~Purser.
\newblock Fibonacci grids: A novel approach to global modelling.
\newblock {\em Quarterly Journal of the Royal Meteorological Society},
  132(619):1769--1793, 2006.

\bibitem{Szego-75-1}
G.~Szeg\"o.
\newblock {\em Orthogonal Polynomials}.
\newblock Amer. Math. Soc., Providence, RI, 1975.

\bibitem{triebel1992}
Hans Triebel.
\newblock {\em Theory of function spaces. {II}}, volume~84 of {\em Monographs
  in Mathematics}.
\newblock Birkh\"auser Verlag, Basel, 1992.

\bibitem{Wendland-99-2}
H.~Wendland.
\newblock Meshless {G}alerkin methods using radial basis functions.
\newblock {\em Math.\ Comp.}, 68:1521--1531, 1999.

\bibitem{wendland_book}
H.~Wendland.
\newblock {\em Scattered Data Approximation}.
\newblock Cambridge University Press, Cambridge, UK, 2005.

\bibitem{WrightQuadWeights}
G.~B. Wright.
\newblock \url{http://math.boisestate.edu/~wright/quad_weights/}.
\newblock Accessed Oct. 30, 2012.

\bibitem{WrightFlyerYuen}
G.~B. Wright, N~Flyer, and D~Yuen.
\newblock {A hybrid radial basis function - pseudospectral method for thermal
  convection in a 3{D} spherical shell}.
\newblock {\em Geochem. Geophys. Geosyst.}, 11:Q07003, 2010.

\end{thebibliography}
\end{document}